\newtheorem{thm}{Theorem}
\newtheorem{lem}[thm]{Lemma}
\newtheorem{prop}[thm]{Proposition}
\newtheorem{cor}[thm]{Corollary}
\theoremstyle{definition}
\newtheorem{defn}[thm]{Definition}
\newtheorem{rem}[thm]{Remark}
\newtheorem{exmp}[thm]{Example}
\newcommand{\D}{D}
\renewcommand{\phi}{\varphi}
\DeclareMathOperator{\M}{\mathcal{M}}
\DeclareMathOperator{\Ms}{\mathcal{M}_{\mathit \sigma} }
\DeclareMathOperator{\Mss}{\mathcal{M}_{\mathit \sigma\times \sigma} }
\DeclareMathOperator{\Mse}{\M_{\mathit \sigma}^e}
\DeclareMathOperator{\Msse}{\M_{\mathit \sigma\times\sigma}^e}
\newcommand{\FS}{\alf^\infty}
\newcommand{\uin}{^{(n)}}
\newcommand{\ijn}{^{(n)}_j}
\newcommand{\BB}{\mathscr{B}}
\newcommand{\F}{\mathscr{F}}
\newcommand{\FB}{\F_\BB}
\newcommand{\FBk}{\F_{\BB|k}}
\newcommand{\Bl}{\mathcal{B}}
\newcommand{\alf}{\mathscr{A}}
\DeclareMathOperator{\lang}{\Bl}%{\mathcal{L}}
\DeclareMathOperator{\CL}{CL}
\DeclareMathOperator{\dirac}{\delta}
\newcommand{\dbar}{{\bar d}}
\newcommand{\dbarm}{\bar{d}_{\mathcal{M}}}
\newcommand{\dund}{{\underline d}}
\newcommand{\Hdbar}{{\bar d}^H}
\newcommand{\Hdbarm}{\bar{d}^H_{\mathcal{M}}}
\newcommand{\Hdund}{{\underline d}^H}
\newcommand{\dHam}{d_\textrm{Ham}}
\newcommand{\weakst}{weak$^*$}
\newcommand{\eps}{\varepsilon}
\newcommand{\R}{\mathbb{R}}
\newcommand{\Z}{\mathbb{Z}}
\newcommand{\N}{\mathbb{N}}
\author{Jakub Konieczny \and Michal Kupsa \and Dominik Kwietniak}% \and Martha {\L}{\k{a}}cka \and Piotr Oprocha}
\address[D.\ Kwietniak]{Faculty of Mathematics and Computer Science, Jagiellonian University in Krakow, ul. \L o\-jasiewicza 6, 30-348 Krak\'ow, Poland}\email{dominik.kwietniak@uj.edu.pl}
\urladdr{www.im.uj.edu.pl/DominikKwietniak/}
\address[M.\ Kupsa]{ The Czech Academy of Sciences, Institute of Information Theory and Automation, Prague 8, CZ-18208}\email{kupsa@utia.cas.cz}
\address[J.\ Konieczny]{Camille Jordan Institute,
Claude Bernard University Lyon 1,
43 Boulevard du 11 novembre 1918,
69622 Villeurbanne Cedex, France}
\email{jakub.konieczny@gmail.com}
\title[On $\dbar$-approachability\ldots]{On $\dbar$-approachability, entropy density and $\BB$-free shifts}
\date{\today}
\begin{document}

\begin{abstract}
We study approximation schemes for shift spaces over a finite alphabet using (pseudo)metrics connected to Ornstein's $\dbar$ metric.
This leads to a class of shift spaces we call $\dbar$-approachable. A shift space $\dbar$-approachable when its canonical sequence of Markov approximations converges to it also in the $\dbar$ sense. We give a topological characterisation of chain mixing $\dbar$-approachable shift spaces. As an application we provide a new criterion for entropy density of ergodic measures.
Entropy-density of a shift space means
that every invariant measure $\mu$ of such a shift space is the weak$^*$ limit of a sequence $\mu_n$ of ergodic measures with the corresponding sequence of entropies $h(\mu_n)$ converging to $h(\mu)$.  We prove ergodic measures are entropy-dense for every shift space that can be approximated in the $\dbar$ pseudometric by a sequence of transitive sofic shifts. This criterion can be applied to many examples that were out of the reach of previously known techniques including hereditary $\BB$-free shifts and some minimal or proximal systems. The class of symbolic dynamical systems covered by our results includes also shift spaces where entropy density was established previously using the (almost) specification property.
\end{abstract}
\subjclass[2010]{
37B05 (primary) 37A35, 37B10, 37B40, 37D20 (secondary)}
\keywords{specification property, topological entropy, shift space, Poulsen simplex, Besicovitch pseudometric}
\maketitle

%%% It seems that the main impact is in the theory of B-free shifts, concluded with some other applications that demonstrate generality of our method
%\input{intro_20210408}

We study approximation schemes for shift spaces over a finite alphabet $\alf$ (subshifts of the full shift $\FS$ over $\alf$). For every shift space $X$ there exists a canonically defined sequence of shifts of finite type (topological Markov approximations) converging to $X$ in a natural topology on the space of all subshifts of $\FS$. This fact, however, is of little practical use, because dynamical properties usually do not convey from a sequence of shift spaces to its limit. Here, we consider another, stronger  topology on the powerset of $\FS$ which is induced by the $\dbar$-pseudometric on $\FS$ or one of its relatives. The pseudometric $\dbar$ is given for $x=(x_j)_{j=0}^\infty,y=(y_j)_{j=0}^\infty\in \FS$ by
\[
\dbar(x,y)=\limsup_{n\to\infty}\frac{1}{n}|\{0\le j <n : x_j\neq y_j\}|.
\]
It is an analog of Ornstein's  metric $\dbarm$ on $\alf$-valued stationary stochastic processes. Since $\dbar$ is bounded, it induces a Hausdorff pseudometric $\Hdbar$ on the space of all nonempty subsets of $\FS$. We examine the existence of an approximating sequence in the $\Hdbar$ sense and study its consequences.
In particular, we study shift spaces we call \emph{$\dbar$-approachable}, which are $\Hdbar$-limits of their own topological Markov approximations. We provide a topological characterisation of chain mixing $\dbar$-approachable shift spaces using the $\dbar$-shadowing property. This can be considered as an analogue for Friedman and Ornstein's characterisation of Bernoulli processes as totally ergodic $\dbarm$-limits of their own Markov approximations \cite{FO70}. We prove that many specification properties imply chain mixing and $\dbar$-approachability. It follows that all shift spaces with the almost specification property (this class includes all $\beta$-shifts and all mixing sofic shifts, in particular all mixing shifts of finite type) are $\dbar$-approachable. In a forthcoming paper \cite{KKK3}, we also construct minimal and proximal examples of $\dbar$-approachable shift spaces, thus proving $\dbar$-approachability is a more general phenomenon than specification.
%We also show that $\dbar$-approachability and chain-mixing imply $\dbar$-stability, a property recently introduced by Tim Austin in his study of Bernoulliness of equilibrium states. This allows us to provide first examples of minimal or proximal $\dbar$-stable shift spaces, thus answering a question posed by Austin.
Regarding consequences of $\Hdbar$ approximation, we show that it implies convergence of simplices of invariant measures in the Hausdorff metric $\Hdbarm$ induced by Ornstein's $\dbarm$ metric on the space $\Ms(\FS)$ of all shift invariant measures on $\FS$.
To be more precise: Write $\Ms(X)$ for the space of all Borel invariant probability measures on a shift space $X$ and $\Mse(X)$ for the set of ergodic measures in $\Ms(X)$. We prove that if a sequence of shift spaces $(X_n)_{n=1}^\infty$ converges to $X$ in $\Hdbar$, then $(\Ms(X_n))_{n=1}^\infty$ and $(\Mse(X_n))_{n=1}^\infty$ converge in $\Hdbarm$ to $\Ms(X)$, respectively to $\Mse(X)$. This allows us to show that certain geometric features of the simplices of invariant measures of shift spaces in the approximating sequence are inherited by the simplex of the limit. The geometry of the space $\Ms(X)$ is, in turn, an important feature of a shift space $X\subseteq\FS$, and besides being interesting per se, it has profound connections with dynamical properties of $X$.
It turns out that for many important shift spaces and, more generally, dynamical systems and continuous time flows on compact metric spaces, $\Mse(X)$ is a nontrivial dense subset of $\Ms(X)$ endowed with the weak$^*$ topology, see \cite{GK,LO,Parthasarathy,Sigmund70,Ville}.
A stronger property is entropy density of ergodic measures introduced by Orey in 1986 \cite{Orey} and F\"ollmer and Orey in 1988 \cite{FO}. A shift space $X$ has entropy density of ergodic measures if for every $\nu\in \Ms(X)$ there exists a sequence $\mu_n \in \Mse(X)$ such that $\mu_n \rightarrow \nu$  in the weak$^*$ topology and, in addition, the measure-theoretic entropies also converge, that is, $h(\mu_n) \rightarrow h(\nu)$ as $n\to\infty$.
There are examples of shift spaces with dense, but not entropy-dense sets of ergodic measures (see \cite{GK}). In addition to being of interest in their own right, density of ergodic measures and entropy-density are strongly related to various results on large deviations and multifractal analysis \cite{Comman,EKW,PS,PS2}. We refer the reader to Comman's article \cite{Comman2017} and references therein for more information about that connection and more large deviations results.

We show that entropy-density of ergodic measures is a $\Hdbar$-closed property. This allows for a new method of proof of entropy density. So far, specification-like conditions were invoked to prove entropy-density (see \cite{EKW,PS}),  and our results extends that to $\Hdbar$ limits of these shift spaces. In particular, mixing shifts of finite type are entropy-dense, hence our result shows that all $\dbar$-approachable chain-mixing shift spaces are also entropy dense. This leads to a new proof of entropy density of systems with almost specification, which is now a direct consequence of entropy-density of mixing shifts of finite type and $\dbar$-approachability. Since we know there are examples of minimal or proximal $\dbar$-approachable shifts, we see that our technique yields entropy-density for examples which were beyond the reach of methods based on specification properties. Furthermore, we prove that there are cases where $\dbar$-approachability does not necessarily hold, but it is still possible to approximate in $\Hdbar$ with a sequence of entropy-dense shift spaces. We apply this technique to hereditary closures of $\BB$-free shifts (a class including many interesting $\BB$-free shifts, see Corollary  \ref{cor:B-free} below). These shift spaces are not weakly mixing, hence they do not have $\dbar$-shadowing property (we show examples that are not $\dbar$-approachable, see Example \ref{exmp:bfree-non-dbar-approach}
% \ref{exmp:non-dbar-approach} and Remark \ref{rem:dbar-approach-B-free}
). Nevertheless, using the Davenport-Erd\H{o}s theorem they are easily seen to be approximated by naturally defined sequences of transitive sofic shifts, and this implies entropy-density. Note that $\BB$-free shifts do not have even the weakest version of the specification property (see \cite{KLO} for a list of possibilities).
Recall that an integer is \emph{$\BB$-free} if it has no factor in a given set $\BB\subseteq\N$. For example, the set of $\BB_{\text{sq}}$-free integers where $\BB_{\text{sq}}=\{p^2:p\text{ prime}\}$ is just the set of \emph{square-free integers}. Sets of $\BB$-free integers were studied by Chowla, Davenport, and Erd\H{o}s, to name a few. Recently, Sarnak \cite{Sarnak}  initiated the study of dynamics of a shift space $X_\text{sq}$ associated to square-free integers. El Abdalauoi, Lema\'{n}czyk, and de la Rue \cite{ALR} and Dymek et al \cite{DKKPL}
extended Sarnak's approach to $\BB$-free integers for an arbitrary $\BB$.
These systems were also investigated by Avdeeva and her co-authors \cite{Avdeeva, ACS}, Cellarosi and Sinai \cite{CS}, Ku{\l}aga-Przymus and her co-authors \cite{KPL,KPLW,KPLW2}, Peckner \cite{Peckner}, and Keller and his co-authors \cite{KKLem, Keller, Keller-Studia}. Higher dimensional analogs of $\BB$-free shifts also attracted attention, see \cite{BH15,Bartnicka,CV}).

Furthermore, it is known that ergodic measures are dense for hereditary $\BB$-free shifts by \cite{KPLW2}, but a proof of this fact is a consequence of a non-trivial reasoning presented in \cite{KPLW} and \cite{DKKPL} (more precisely, in \cite{KPLW} the analysis of invariant measures was performed under some additional technical assumptions on the set $\BB$ and it was later extended to the general case in \cite{DKKPL}). We extend the main result of \cite{KPLW2} in two directions: we add entropy to the picture and broaden the class of shift spaces for which this result holds (see Remark \ref{rem:more-than-B-free}). Our proof is independent of \cite{KPLW,KPLW2}, and \cite{DKKPL}. As a matter of fact our methods lead to new, often shorter proofs, of many results from \cite{KPLW,KPLW2}, and \cite{DKKPL}. Furthermore, our methods can be adapted to shift spaces with an action of $\Z^d$ for $d\ge 2$ or a countable amenable residually finite group. This will be a subject of our future study (forthcoming paper).

The idea of using $\dbarm$ approximation was independently considered by Dan Thompson \cite{Thompson-note}, who used it in the setting of \cite{CT}.
%Our approach seems to be more general

This paper is organised as follows: in the next section we recall some notation and basic definitions.
In Section \ref{sec:dbar} we discuss Hausdorff (pseudo)metrics $\Hdbar$ and $\Hdbarm$ induced by $\dbar$ and $\dbarm$.
Section \ref{sec:approach} contains the definition and characterisation of $\dbar$-approachability.
In Section \ref{sec:main} we study consequences of $\Hdbar$ convergence of shift spaces and $\Hdbarm$ convergence of simplices of invariant measures. The last three sections contain applications and examples illustrating our approach. In Section \ref{sec:spec} we show that our results contain previous results on entropy-density of shift spaces with a variant of the specification property. Section \ref{sec:bfree} discusses approximation schemes and entropy density of $\BB$-free shifts and their hereditary closures.
In Section \ref{sec:inner} we show that topologically mixing $S$-gap shifts are another example of a shift space allowing for a natural $\Hdbar$-approximation by a sequence of mixing shifts of finite type.

\section{Definitions}\label{sec:definitions}
%\input{definitions_20210408}
%Tutaj wpisujemy jakich terminow uzywamy

\subsection{General}
We let $\N$ denote the set of positive integers and $\N_0=\N\cup\{0\}$.
\subsection{Shift spaces and languages}
Let $\alf$ be a finite set, henceforth referred to as the \emph{alphabet}. The \emph{full shift} $\alf^{\infty}$ is the set of all $\alf$-valued infinite sequences indexed by nonnegative integers. We endow $\FS$ with the product topology coming from the discrete topology on $\alf$. A metric on $\FS$ compatible with the product topology is
\[
\rho(x,y)=\begin{cases}
            0, & \mbox{if } x=y, \\
            2^{-\min\{j:x_j\neq y_j\}}, & \mbox{otherwise}.
          \end{cases}
\]
We consider $\FS$ a dynamical system under the action of the \emph{shift} transformation $\sigma\colon\alf^\infty\to\alf^\infty$, where $\sigma(x)_j=x_{j+1}$ for every $x=(x_i)_{i=0}^\infty\in\alf^\infty$ and $j\ge 0$. A \emph{shift space} over $\alf$ is a nonempty, closed, and shift invariant subset of $\alf^\infty$. An \emph{block} or a \emph{word}  over $\alf$ is a finite sequence of symbols from $\alf$. The \emph{length} of a word $w$ over $\alf$ is denoted by $|w|$. The \emph{concatenation} of words $u$ and $v$ is denoted simply as $uv$. We agree that the \emph{empty word} has length $0$. Given $x \in X$ and $i,j\in\N_0$ with $i<j$ we set $x_{[i,j)}$ to be the word
$w=w_1\ldots w_n\in\alf^n$ such that $n=j-i$ and  $x_{i+k-1} = w_k$ for all $1 \leq k \leq n$.
We say that a word $\omega \in  \alf^n$ \emph{appears} in a shift space $X \subseteq \FS$ if there exists some $x\in X$ and $\ell\in\N_0$ such that
$\omega=x_{[\ell,\ell+n)}$. For $n\in\N$, we write $\lang_n(X) \subseteq \alf^n$ for the set of blocks of length $n$ appearing in $X$. The \emph{language} of a shift space $X\subseteq\FS$ is the set $\lang(X)$ of finite words over $\alf$ that appear in $X$. A shift space $X$ is \emph{transitive} if for every words $u,w\in\lang(X)$ there is a word $v$ such that $uvw\in\lang(X)$. We say that a shift space $X$ is \emph{topologically mixing} if for every words $u,w\in\lang(X)$ there is $N\in\N_0$ such that for every $n\ge N$ there is a word $v$ satisfying $uvw\in\lang(X)$ and $|v|=n$.

\subsection{Shifts of finite type}
Recall that every family $\mathscr{F}$ of finite words over $\alf$ determines the collection $X_{\mathscr{F}}$ consisting of such sequences $x=(x_i)_{i=0}^\infty\in\alf^\infty$ that no word from $\mathscr{F}$ appears in $x$ is either empty or a shift space. Conversely, for every shift space $X$ there is a family $\mathscr{F}$ of finite words such that $X=X_\mathscr{F}$. A shift space $X$ is a \emph{shift of finite type} if there exists a finite set $\mathscr{F}$ such that $X=X_{\mathscr{F}}$.

\subsection{Ergodic properties of shift spaces}
The set of all Borel probability measures on a shift space $X$ is denoted by $\M(X)$.
We equip $\M(X)$ with the \weakst\ topology, which is known to be metrizable and compact.
The set of shift invariant probability measures supported on a shift space $X\subseteq\FS$ is denoted by $\Ms(X)$ and $\Mse(X)$ stands for the set of ergodic $\sigma$-invariant measures on $X$.

A point $x\in \FS$ is \emph{generic} for an ergodic measure $\mu\in\Mse(\FS)$,  if for every continuous function $f\colon\FS\to\R$ the sequence
\[
\frac{1}{N}\sum_{j=0}^{N-1}f(\sigma^n(x))
\]
converges as $N\to\infty$ to $\int_{\FS}f\,\text{d}\mu$. Every ergodic measure has a generic point.
Given $\mu\in\Ms(\FS)$, we let $h(\mu)$ denote the Kolmogorov-Sinai entropy of $\mu$. % (see any textbook on ergodic theory).

%\begin{dfn}
	The ergodic measures are \emph{entropy dense} if
	for every measure $\mu\in \Ms(X)$, every neighborhood $U$ of $\mu$ in $\Ms(X)$ and every $\eps>0$ there is an ergodic measure $\nu\in U$ with
	$|h(\nu)-h(\mu)|<\eps$.
%\end{dfn}

        Let us point out that property of having entropy-dense ergodic measures is preserved by conjugacy. In other words, two conjugated dynamical systems either both have entropy dense ergodic measures or both do not have this property.

\subsection{Hereditary shifts}
Suppose that $\alf$ is additionally equipped with a (total) order $\leq$. In particular, if $\alf=\{0,1\}$ then $\leq$ is the usual order. We equip $\FS$ with coordinate-wise partial order also denoted by $\leq$. This means that for $x,y \in \FS$ we have $x \leq y$ if and only if $x_i \leq y_i$ for all $i \in \N_0$. A shift space $X\subseteq\FS$ is \emph{hereditary} if for every $x\in X$ and $y\in\FS$ with $y \leq x$ we have $y \in X$. The hereditary closure $\tilde X$ of $X$ is the smallest hereditary shift containing $X$ and consists of those $y \in \FS$ for which there exists $x \in X$ with $y \leq x$. For more details, see \cite{K}.

\subsection{Sofic shifts}
An (oriented) $\alf$-labelled (multi)graph $G = (V,E,\tau)$ consists of a vertex set $V$, an edge set $E$ and a label map $\tau \colon E \to \alf$. Each edge $e \in E$ has two endpoints $i(e), t(e) \in V$, sometimes called, respectively, the initial vertex and the terminal vertex of $e$. A path of length $\ell$ (finite or infinite) in $G$ is a sequence of $\ell$ edges $e_1, e_2, \dots$ such that for each $i < \ell$, the terminal vertex of $e_i$ is the same as the initial vertex of $e_{i+1}$, equivalently,  a sequence of edges $e_1, e_2, \dots$ is a path if there exists a sequence of vertices $v_1,v_2,\dots \in V$ such that $i(e_i) = v_i$ and $t(e_i) = v_{i+1}$ for every $i<\ell$.
	
The shift $X_G \subseteq \FS$ corresponding to $G$ consists of all sequences $x \in \FS$ such that there exists an infinite path $e_1,e_2, \dots$ on $G$ such that $x_i = \tau(e_{i+1})$ for each $i \in \N_0$. That is, the shift space $X_G$ is obtained by reading off labels of all
infinite paths on $G$. A shift space is a \emph{sofic shift} if there exists a labelled graph $G= (V,E,\tau)$ with $V$ finite such that $X=X_G$. Then we also say that $X$ is presented by $G$. Every shift of finite type is sofic. A sofic shift is transitive if and only if it can be presented by a (strongly) connected graph (each pair of vertices can be connected by a path), see \cite[Prop. 3.3.11]{LM}. A sofic shift is topologically mixing if and only if it can be presented by a (strongly) connected graph such that there are two closed paths on $G$ of coprime lengths.
%Pfister and Sullivan \cite{PS} showed that the ergodic measures on transitive sofic shifts $X_G$ are entropy-dense.

\subsection{Markov approximations}
We recall the topological approximation scheme of shift spaces by canonically defined sequences of shifts of finite type (see \cite{ChRU}, \cite[p. 111]{DGS}, or \cite{Kurka} for more details).

Given a shift space $X\subseteq\FS$ and a family $\mathscr{F}$ of finite words over $\alf$ such that $X=X_\mathscr{F}$ one sets $\mathscr{F}[n]$ to be the set of all words $w$ in $\mathscr{F}$ with the length $|w|\le n+1$. Clearly, the shift space $X^M_n=X_{\mathscr{F}[n]}$ is a shift of finite type whose language has the same words of length at most $n+1$ as $X$. The shift space $X^M_n$ is called the $n$-th \emph{(topological) Markov approximation} of $X$ or \emph{finite type approximation of order $n$ to $X$}.

Alternative description of $X^M_n$ uses Rauzy graphs.  The \emph{$n$-th Rauzy graph} of $X$ is a labelled graph $G_n=(V_n,E_n,\tau_n)$,
where $V_n=\lang_n(X)$, $E_n=\lang_{n+1}(X)$, and for $w=w_0w_1\ldots w_{n}\in E_n$ we set the initial vertex
$i(w)$ of $w$ to be $w_0\ldots w_{n-1}\in V_n$, the terminal vertex $t(w)$ of $w$ is $w_1\ldots w_{n}\in V_n$,
and the label $\tau_n(w)=w_0\in\alf$. The shift space $X_n$ presented by $G_n$ is clearly sofic, and by Proposition 3.62 in \cite{Kurka}, it satisfies
$\lang_{j}(X_n)=\lang_{j}(X)$ for $j=1,\ldots,n+1$. It is now easy to see that $X_n$ is the $n$-th  topological Markov approximations for $X$.

\section{Alternative (pseudo)metrics for $\FS$ and $\Ms(\FS)$ and their hyperspaces}\label{sec:dbar}

\subsection{The functions $\dund$, $\dbar$, and $\dbarm$}
We now discuss pre- and pseudometric $\dund$ and $\dbar$ for the space $\FS$ and Ornstein's metric $\dbarm$ on $\Ms(\FS)$. These functions are not compatible with the natural topologies on $\FS$ and $\Ms(\FS)$, which are the product topology and the weak$^*$ topology, respectively. Note that usually $\dbarm$ is also denoted by $\dbar$, but as we will juggle between $\dbar$ and $\dbarm$ a lot, we decided to change the notation to avoid confusion.  We call $\dbarm$ the `d-bar distance for measures' and its pseudometric version for $\FS$, we call `pointwise d-bar'.

Given $x=(x_n)_{n=0}^\infty,y=(y_n)_{n=0}^\infty\in\FS$ we define
\begin{gather*}
  \dbar(x,y)=\limsup_{n\to\infty}\frac{1}{n}|\{0\le j < n:x_j\neq y_j\}|, \\
  \dund(x,y)=\liminf_{n\to\infty}\frac{1}{n}|\{0\le j < n:x_j\neq y_j\}|.
\end{gather*}
The function $\dund$ is only a \emph{premetric}, that is, $\dund$ is a real-valued, nonnegative, symmetric function on $\FS\times\FS$ vanishing on the diagonal $\{(x,y)\in \FS\times\FS: x=y\}$. It is easy to see that the triangle inequality fails for $\dund$ on $\FS$. The function $\dbar$ is a \emph{pseudometric}, that is, $\dbar$ is a premetric satisfying the triangle inequality and the implication $\dbar(x,y)=0\implies x=y$ fails, so $\dbar$ is not a metric on $\FS$ if $\alf$ has at least two elements.
Furthermore, $\dbar,\dund\colon\FS\times\FS\to [0,1]$ are both shift invariant, that is, $\dund(x,y)=\dund(\sigma(x),\sigma(y))$ and $\dbar(x,y)=\dbar(\sigma(x),\sigma(y))$ for all $x,y\in\FS$.

The distance function $\dbarm$ is defined using joinings. We say that a measure $\lambda$ on $\FS\times\FS$ is a \emph{joining} of shift invariant measures $\mu$ and $\nu$, if $\lambda$ is $\sigma\times\sigma$-invariant and $\mu$ and $\nu$ are the marginal measures for $\lambda$ under the projection to the first, respectively the second, coordinate. By $J(\mu,\nu)$ we denote the set of all joinings of $\mu$ and $\nu$. It is a non-empty set because the product measure $\mu\times\nu$ is always a joining of $\mu$ and $\nu$. We can now define $\dbarm$ distance on $\Ms(\FS)$ as follows:
\begin{align*}
  \dbarm(\mu,\nu)=\inf_{\lambda\in J(\mu,\nu)}\int_{\FS\times\FS} d_0(x,y) \dif
   \lambda,
\end{align*}
where $d_0(x,y)=1$ if $x_0\neq y_0$ and $d_0(x,y)=0$ otherwise.
It is well-known that the $\dbarm$ is a metric on $\Ms(\FS)$ and the convergence in this metric implies weak$^*$ convergence.
 This metric was introduced by Don Ornstein (for more details, see \cite{Ornstein}).
Furthermore, the space $\Ms(\FS)$ under the $\dbarm$-metric is complete but not separable, in particular the space $\Ms(\FS)$ endowed with $\dbarm$ is not compact. The space $\Mse(\FS)\subseteq\Ms(\FS)$ of ergodic measures is $\dbarm$-closed, as are the spaces of strong-mixing and Bernoulli measures on $\FS$.
Entropy function $\mu\mapsto h(\mu)$ is continuous on $\Ms(\FS)$ under $\dbarm$.

\subsection{Hausdorffication of $\dund$, $\dbar$, and $\dbarm$}
In order to introduce the notion of approximation of one shift with another, we extend the functions $\dbarm$, $\dbar$ and $\dund$ so that they make sense for pairs of subsets of $\FS$ and $\Ms(\FS)$ following the well-known construction of the Hausdorff metric.

In general, a Hausdorff premetric may be defined on the powerset of any bounded premetric (or pseudometric, or metric) space. If $(Z,\rho)$ is a set equipped with a bounded premetric, then we define the \emph{Hausdorff premetric} $\rho^H$ induced by $\rho$ on the space of all nonempty subsets of $Z$. %Abusing our notation, we still denote this new premetric as $\rho$ and call it the \emph{Hausdorff premetric} induced by $\rho$.
For a point $z\in Z$ and nonempty $A,B\subseteq Z$, we put
\[\rho(z,B)=\inf_{b\in B}\rho(z,b),\quad\text{and}\quad \rho^H(A,B)=\max\left\{\sup_{a\in A}\rho(a,B),\ \sup_{b\in B}\rho(b,A)\right\}.\]
The function $\rho^H$ is a premetric, which becomes a pseudometric whenever $Z$ is a bounded pseudometric space. Even if $\rho$ is a bounded metric on $Z$, its Hausdorff counterpart induced on the powerset of $Z$ might still be only a pseudometric. This is because for nonempty $A,B\subset Z$ we have $\rho(A,B)=\rho(\overline{A},\overline{B})$, where $\overline{(\cdot)} $ is the closure operator naturally defined on the powerset of $(Z,\rho)$. Nevertheless, if $\rho$ is a bounded metric, then the Hausdorff pseudometric $\rho^H$ induced by $\rho$ becomes a metric when restricted to the set $\CL(Z,\rho)$ of closed nonempty subsets of $(Z,\rho)$.
We apply these ideas to the metric space $(\Ms(\FS),\dbarm)$, the pseudometric space  $(\FS,\dbar)$,  and the premetric space $(\FS,\dund)$ obtaining a metric $\Hdbarm$ on the space $\CL(\Ms(\FS),\dbarm)$ of nonempty closed subsets of $(\Ms(\FS),\dbarm)$ together with a pseudometric $\Hdbar$ and a premetric $\Hdund$ on the set of all nonempty subsets of $\FS$.

%Before we state our main results let us first
Below, we discuss some properties of the convergence of sets with respect to the Hausdorff metric $\rho^H$ in case when $(Z,\rho)$ is not necessarily compact bounded pseudo\-metric space. In this setting some properties, well-known in the compact case, fail. For example, equivalent metrics, that is metrics $\rho,\tilde\rho$ inducing the same topology on $Z$ may induce non-homeomorphic spaces $(\CL(Z),\rho^H)$ and $(\CL(Z),\tilde{\rho}^H)$.

The following propositions gathers some properties of the Hausdorff metric. %and Kuratowski limits.
Proofs can be found in the literature (see \cite[\S2.15]{IN}).

\begin{prop} \label{prop:Lim-properties} Let $(Z,\rho)$ be a bounded metric space. %and $(A_n)_{n=1}^\infty$ be a sequence in $\CL(Z,\rho)$. Then
%\begin{enumerate}
%  \item \label{prop:Lim-properties-i} If $(A_n)_{n=1}^\infty$ decreases, that is, if $A_1\supseteq A_2 \supseteq \ldots$, then
%\[
%\Lim_{n\to\infty} A_n=\bigcap_{n=1}^\infty A_n.
%\]
%  \item \label{prop:Lim-properties-ii} If $(A_n)_{n=1}^\infty$ converges to $A\in\CL(Z,\rho)$  with respect to $\rho^H$, then
%\[
%\Lim_{n\to\infty} A_n=A.
%\]
%The converse holds, provided that $Z$ is compact.
% \item \label{prop:Lim-properties-iii} It is possible that $\Lim_{n\to\infty} A_n=A$ for some $A\in\CL(Z)$, but $A$ is not a limit of  $(A_n)_{n=1}^\infty$ with respect to the Hausdorff metric $\rho^H$.
%  \item \label{prop:Lim-properties-iv}
If $(Z,\rho)$ is a complete metric space, then so is $(\CL(Z),\rho^H)$.
%\end{enumerate}
\end{prop}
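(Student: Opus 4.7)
The plan is to prove completeness by the standard diagonal construction: given a Cauchy sequence $(A_n)_{n=1}^\infty$ in $(\CL(Z),\rho^H)$, extract a fast-converging subsequence and build the limit set from Cauchy sequences threading through the $A_n$'s. Specifically, I would first pass to a subsequence $(A_{n_k})_{k=1}^\infty$ satisfying $\rho^H(A_{n_k},A_{n_{k+1}}) < 2^{-k}$, which is possible since $(A_n)$ is Cauchy. Then I would define
\[
A = \bigl\{z \in Z : \exists\, (z_k)_{k=1}^\infty \text{ with } z_k \in A_{n_k},\ z_k \to z \text{ in } (Z,\rho)\bigr\}.
\]
The main task is to check that $A$ is nonempty, closed, and that $\rho^H(A_{n_k},A) \to 0$.

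For nonemptiness, I would pick any $w_1 \in A_{n_1}$, then inductively choose $w_{k+1} \in A_{n_{k+1}}$ with $\rho(w_k, w_{k+1}) < 2^{-k}$, which is possible because $A_{n_{k+1}}$ lies within $\rho^H$-distance $2^{-k}$ of $A_{n_k}$ (use that the infimum defining $\rho(w_k, A_{n_{k+1}})$ is attained up to $2^{-k}$). The sequence $(w_k)$ is then $\rho$-Cauchy in $Z$, hence convergent by completeness of $(Z,\rho)$, and its limit lies in $A$. Closedness of $A$ follows by a diagonal argument: given $a^{(j)} \to a$ with $a^{(j)} \in A$, each $a^{(j)}$ is the limit of some sequence $(z_k^{(j)})$ with $z_k^{(j)} \in A_{n_k}$, and one selects a diagonal $z_{k}^{(j(k))}$ converging to $a$, where $j(k)$ grows slowly enough.

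For the convergence $\rho^H(A_{n_k}, A) \to 0$, I would fix $k$ and check both directions. If $z \in A_{n_k}$, I construct (exactly as in the nonemptiness step, but starting the induction at index $k$) a sequence $(w_j)_{j \ge k}$ with $w_k = z$, $w_j \in A_{n_j}$, and $\rho(w_j, w_{j+1}) < 2^{-j}$; its limit $w_\infty \in A$ satisfies $\rho(z, w_\infty) \le \sum_{j \ge k} 2^{-j} = 2^{-k+1}$. Conversely, if $w \in A$ is the limit of $(z_j)$ with $z_j \in A_{n_j}$, then $\rho(w, A_{n_k}) \le \rho(w, z_k) + \rho(z_k, A_{n_k}) = \rho(w, z_k)$, and by telescoping $\rho(w,z_k) \le \sum_{j \ge k} \rho(z_j, z_{j+1})$; a variant of the construction ensures this telescoped sum is $O(2^{-k})$. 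Hence $\rho^H(A_{n_k}, A) = O(2^{-k})$. The full sequence then converges because it is Cauchy and has a convergent subsequence.

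The main obstacle is the threading construction: one must be careful that the inductive choice of $w_{k+1} \in A_{n_{k+1}}$ near $w_k$ genuinely uses the definition of $\rho^H$ (so a strict inequality $\rho^H(A_{n_k},A_{n_{k+1}}) < 2^{-k}$ guarantees a point within $2^{-k}$, not merely within $2^{-k}+\eps$), and that the diagonal extraction for closedness respects the index structure. Once this bookkeeping is set up, the argument is essentially the textbook proof that the Hausdorff metric on closed subsets of a complete metric space is itself complete, adapted to the bounded setting.
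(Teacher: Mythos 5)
The paper itself gives no proof of this proposition: it defers to the literature (\cite{IN}, \S 2.15), and your argument is precisely the standard proof found there --- pass to a rapidly Cauchy subsequence, define the candidate limit $A$ as the set of limits of sequences threading through the $A_{n_k}$, and verify nonemptiness, closedness, and $\rho^H(A_{n_k},A)\to 0$. Your outline is correct, including the two bookkeeping points you flag (the strict inequality guaranteeing a point within $2^{-k}$ of $w_k$, and the slowly growing diagonal $j(k)$ for closedness), so there is no divergence from the intended argument to report, only a match with the cited textbook proof.

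One step as written does need a small repair. For the direction $\sup_{w\in A}\rho(w,A_{n_k})=O(2^{-k})$ you invoke ``a variant of the construction'' to make the telescoped sum $\sum_{j\ge k}\rho(z_j,z_{j+1})$ small; but the witness sequence $(z_j)$ attached to a given $w\in A$ is \emph{arbitrary} (the definition of $A$ only asserts existence of some convergent threading sequence), and you cannot replace it by a controlled one while keeping the same limit point $w$ without further argument --- as stated, $\rho(w,z_k)$ and the telescoped sum may be large. The clean fix avoids telescoping altogether: for any $j>k$,
\[
\rho(w,A_{n_k})\le \rho(w,z_j)+\rho(z_j,A_{n_k})\le \rho(w,z_j)+\rho^H(A_{n_j},A_{n_k})\le \rho(w,z_j)+2^{-k+1},
\]
and letting $j\to\infty$ gives $\rho(w,A_{n_k})\le 2^{-k+1}$ uniformly in $w\in A$. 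With that one-line substitution your proof is complete. Two further remarks for tidiness: boundedness of $\rho$ enters only to make $\rho^H$ finite-valued, and it is the nonemptiness of the members of $\CL(Z)$ that lets you prepend arbitrary points $w_1,\dots,w_{k-1}$ when you start the threading at index $k$, so that the limit indeed qualifies for membership in $A$ as you defined it.
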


It follows from Proposition \ref{prop:Lim-properties} that the Hausdorff metric $\Hdbarm$ induced for $\CL(\Ms(\FS))$ by $\dbarm$ is complete.
%because $(\Ms(\FS),\dbarm)$ is a complete metric space and completeness is inherited by Hausdorff metrics.
Therefore the usual Cauchy condition provides a criterion for convergence, %implies that a sequence $(\Ms(X_k))_{k=1}^\infty$ converges with respect to $\dbarm$.
but even if we know that $(\Ms(X_k))_{k=1}^\infty$ converges in $\dbarm$ to some $\M\in\CL(\Ms(\FS),\dbarm)$, it is not clear if there exists a shift space $X\subseteq\FS$ such that $\M=\Ms(X)$. Nevertheless, if $X_1\supseteq X_2 \supseteq \ldots$, then we can set $X$ to be the intersection of all $X_n$'s. %we can describe the limit using Kuratowski's $\Lim$ operator associated with the weak$^*$ topology on $\Ms(\FS)$.

%But first we will restate a classical result.
%\begin{lem}\label{lem:supp}
%If a sequence of measures $(\mu_n)_{n=1}^\infty$ in $\Ms(\FS)$ weak$^*$-converges to a measure $\mu$, then
%\[
%\supp \mu\subset \Liminf_{n\to\infty}\supp\mu_n,
%\]
%where $\Liminf$ operator is associated with the usual topology on $\FS$. %weak$^*$ topology on $\Ms(\FS)$.
%\end{lem}

\begin{prop}\label{prop:intersection}
If $(X_n)_{n=1}^\infty$ is a decreasing sequence of shift spaces over $\alf$, then
%\[
$X=\bigcap_{n=1}^\infty X_n$ %\]
%then $X$
is a nonempty shift space such that
%\[
$\Ms(X)=\bigcap_{n=1}^\infty\Ms(X_n)$.
%\]
\end{prop}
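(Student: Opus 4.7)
The plan is to verify three things in turn: that $X=\bigcap_{n\ge 1}X_n$ is a shift space, then the inclusion $\Ms(X)\subseteq\bigcap_{n\ge 1}\Ms(X_n)$, and finally the reverse inclusion, which will be the substantive step.

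For the first part, $X$ is shift-invariant and closed as an intersection of shift-invariant closed subsets of $\FS$, so the only content is nonemptiness. For this I would invoke compactness of $\FS$ (with the product topology): the sequence $(X_n)_{n\ge 1}$ is a decreasing chain of nonempty closed sets, hence has the finite intersection property, and consequently $X=\bigcap_{n\ge 1}X_n\neq\emptyset$.

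The inclusion $\Ms(X)\subseteq\bigcap_{n\ge 1}\Ms(X_n)$ is immediate: any shift-invariant Borel probability measure supported on $X$ is automatically supported on each $X_n\supseteq X$, and remains shift-invariant, so it lies in $\Ms(X_n)$ for every $n$. For the reverse inclusion, take $\mu\in\bigcap_{n\ge 1}\Ms(X_n)$; viewed as a measure on $\FS$, we have $\mu(X_n)=1$ for every $n$, equivalently $\mu(\FS\setminus X_n)=0$. Since $(X_n)_{n\ge 1}$ is decreasing, the complements $\FS\setminus X_n$ form an increasing sequence whose union is $\FS\setminus X$, so by continuity of measure from below,
\[
\mu(\FS\setminus X)=\lim_{n\to\infty}\mu(\FS\setminus X_n)=0.
\]
Hence $\mu(X)=1$, and since $\mu$ is already shift-invariant, $\mu\in\Ms(X)$.

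There is no real obstacle here: the only subtle point is to notice that, because we are dealing with a \emph{decreasing} chain, the naive worry that a weak$^*$ limit argument might be needed evaporates—one just uses $\sigma$-additivity. It is worth remarking, however, that the analogous statement for an increasing chain of shift spaces does require more care, since then $\bigcup_n X_n$ need not be closed and $\Ms(\overline{\bigcup_n X_n})$ may strictly contain $\bigcup_n\Ms(X_n)$; this asymmetry is exactly why Proposition~\ref{prop:intersection} is stated for nested decreasing sequences only.
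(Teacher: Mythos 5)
Your proof is correct and follows essentially the same route as the paper's: the easy inclusion $\Ms(X)\subseteq\bigcap_{n}\Ms(X_n)$ from monotonicity, and for the reverse inclusion the observation that $\mu(X_n)=1$ for every $n$ forces $\mu(X)=1$, which is exactly your continuity-of-measure step made explicit. Your additional remarks (nonemptiness via compactness and the finite intersection property, and the contrast with increasing chains) merely spell out details the paper leaves implicit.
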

\begin{proof}%Compactness of $\FS$ implies that $X$ is nonempty. It is also easy to see that $X$ is a shift space.
Since $X_1\supseteq X_2\supseteq\ldots\supseteq X$, we have
\[
\Ms(X)=\Ms(\bigcap_{n=1}^\infty X_n)\subseteq\bigcap_{n=1}^\infty\Ms(X_n) .\]
On the other hand,
%\[
%\bigcap_{n=1}^\infty\Ms(X_n)=\Lim_{n\to \infty} \Ms(X_n),
%\]
%where $\Lim$ is the Kuratowski limit associated with the weak$^*$ topology on $\Ms(\FS)$. Therefore if $\mu \in \bigcap_{n=1}^\infty\Ms(X_n)$, then
%there is a sequence $(\mu_n)_{n=1}^\infty$, such that $\mu_n\in \Ms(X_n)$ which weak$^*$ converges to $\mu$. By Lemma \ref{lem:supp} we obtain
%\[
%\supp \mu\subset \Liminf_{n\to\infty}\supp\mu_n\subset \Liminf_{n\to\infty} X_n=\bigcap_{n=1}^\infty X_n.
%\]
if $\mu \in \bigcap_{n=1}^\infty\Ms(X_n)$, then $\mu(X_n)=1$ for every $n\ge 1$. Therefore $\mu(X)=1$.
It follows that $\mu\in\Ms(X)$. %, that is,
%\[
%\bigcap_{n=1}^\infty\Ms(X_n) \subseteq \Ms(X).\qedhere\]
\end{proof}

\begin{lem}\label{gen-scheme-Cauchy}
If $(X_n)_{n=1}^\infty$ is a decreasing sequence of shift spaces over $\alf$ such that the sequence $(\Ms(X_n))_{n=1}^\infty$ is $\Hdbarm$-Cauchy, then
 $X=\bigcap_{n=1}^\infty X_n$ is a shift space such that $\Hdbarm(\Ms(X_n),\Ms(X)) \to 0$ as $n \to \infty$.
\end{lem}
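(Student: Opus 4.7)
The plan is to invoke completeness of the Hausdorff metric and then identify the limit using the intersection description from Proposition \ref{prop:intersection}. First I would observe that each $\Ms(X_n)$ is a nonempty subset of $\Ms(\FS)$ (every shift space carries an invariant measure) and is $\dbarm$-closed: indeed, $\dbarm$-convergence implies weak$^*$-convergence, while $\Ms(X_n)$ is weak$^*$-closed in $\Ms(\FS)$. Hence $\Ms(X_n)\in\CL(\Ms(\FS),\dbarm)$ for every $n$. Since $(\Ms(\FS),\dbarm)$ is complete, Proposition \ref{prop:Lim-properties} yields that $(\CL(\Ms(\FS),\dbarm),\Hdbarm)$ is complete as well, so the $\Hdbarm$-Cauchy sequence $(\Ms(X_n))_{n=1}^\infty$ converges in $\Hdbarm$ to some nonempty $\dbarm$-closed set $\mathcal{M}\subseteq\Ms(\FS)$.

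Next I would apply Proposition \ref{prop:intersection} to conclude that $X=\bigcap_{n=1}^\infty X_n$ is a nonempty shift space with $\Ms(X)=\bigcap_{n=1}^\infty \Ms(X_n)$, and the core of the argument becomes verifying the identification $\mathcal{M}=\Ms(X)$. For the inclusion $\Ms(X)\subseteq\mathcal{M}$, I would fix $\mu\in\Ms(X)$; then $\mu\in\Ms(X_n)$ for every $n$, so
\[
\inf_{\nu\in\mathcal{M}}\dbarm(\mu,\nu)\le\Hdbarm(\Ms(X_n),\mathcal{M})\xrightarrow[n\to\infty]{}0,
\]
and since $\mathcal{M}$ is $\dbarm$-closed, $\mu\in\mathcal{M}$.

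For the reverse inclusion $\mathcal{M}\subseteq\Ms(X)$, I would take $\mu\in\mathcal{M}$ and, for each $n$, choose $\nu_n\in\Ms(X_n)$ with $\dbarm(\mu,\nu_n)\le\Hdbarm(\mathcal{M},\Ms(X_n))+1/n$; these distances tend to $0$, so $\nu_n\to\mu$ in $\dbarm$ and therefore also in the weak$^*$ topology. The decreasing hypothesis on $(X_n)$ gives $\nu_n\in\Ms(X_n)\subseteq\Ms(X_k)$ for every $n\ge k$, and since each $\Ms(X_k)$ is weak$^*$-closed the limit $\mu$ lies in $\Ms(X_k)$ for every $k$. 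Thus $\mu\in\bigcap_{k}\Ms(X_k)=\Ms(X)$ by Proposition \ref{prop:intersection}, completing the identification $\mathcal{M}=\Ms(X)$ and hence $\Hdbarm(\Ms(X_n),\Ms(X))\to 0$.

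The argument is essentially bookkeeping once completeness of $(\CL(\Ms(\FS),\dbarm),\Hdbarm)$ is in hand; the only genuinely substantive point is the passage from a $\dbarm$-approximating sequence $\nu_n\in\Ms(X_n)$ to the conclusion $\mu\in\Ms(X_k)$, which is handled cleanly by descending from $\dbarm$-convergence to weak$^*$-convergence and exploiting the monotonicity of $(\Ms(X_n))$. No serious obstacle is expected, provided one is careful that the completeness statement in Proposition \ref{prop:Lim-properties} is applied to $(\Ms(\FS),\dbarm)$ rather than to $\FS$ equipped with the non-separable pseudometric $\dbar$.
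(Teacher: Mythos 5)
Your proof is correct and takes essentially the same approach as the paper: invoke Proposition \ref{prop:Lim-properties} to get completeness of $(\CL(\Ms(\FS),\dbarm),\Hdbarm)$ and hence an $\Hdbarm$-limit $\mathcal{M}$, then use Proposition \ref{prop:intersection} to identify $\mathcal{M}$ with $\Ms(X)=\bigcap_{n=1}^\infty\Ms(X_n)$. The paper dismisses the identification as easy, whereas you spell out the two inclusions (using $\dbarm$-closedness of $\mathcal{M}$ for one, and the passage from $\dbarm$- to weak$^*$-convergence plus weak$^*$-closedness of each $\Ms(X_k)$ for the other) --- this is precisely the bookkeeping the paper leaves implicit, carried out correctly.
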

\begin{proof}
It follows from Proposition \ref{prop:Lim-properties} that $(\Ms(X_k))_{k=1}^\infty$ converges to some $\M\in\CL(\FS,\dbarm)$ with respect to $\Hdbarm$.
Using Proposition \ref{prop:intersection} we can easily identify this limit with $\Ms(X)$.
\end{proof}

\section{$\dbar$-approachability}\label{sec:approach}

In this section, we introduce \emph{$\dbar$-approachable shift spaces} which are limits of their own canonical finite type approximations not only in the `usual' Hausdorff metric topology, but also in the topology given by $\Hdbar$-pseudo\-metric.

\begin{defn}
We say that a shift space $X \subseteq \FS$ is \emph{$\dbar$-approachable} %(\emph{strongly $\dbar$-approachable})
if %$X$ is chain  transitive
%(respectively, chain mixing)
the sequence $X^M_1,X^M_2,\ldots$ of its Markov approximations of $X$
satisfies $\Hdbar(X^M_n,X)\to 0$ as $n\to\infty$.
\end{defn}
Clearly, every shift of finite type is $\dbar$-approachable. More examples of $\dbar$-approachable shift spaces will follow from the characterisation of  chain-mixing $\dbar$-approachable shift spaces by a notion we call \emph{$\dbar$-shadowing}. The $\dbar$-shadowing property is closely related to the \emph{average shadowing property} introduced by Blank \cite{Blank} and discussed in more detail in \cite{KKO}. Actually, following the ideas presented in \cite{KKO}, one can prove that a shift space $X$ with the average shadowing property also has $\dbar$-shadowing. Furthermore, for a surjective shift space $X$ (that is, when $\sigma(X)=X$), $\dbar$-shadowing implies the average shadowing property. We leave the details to an interested reader. We decided to use $\dbar$-shadowing because it is much easier to use for symbolic system\footnote{We will apply similar strategy to several other notions: instead of presenting a general (that is, stated for continuous maps acting on compact metric spaces) definition of some properties (e.g. chain transitivity, chain mixing, specification and its variants), we will present equivalent definitions adapted to the symbolic dynamics setting.} and allows us to keep our paper self-contained by avoiding a very technical detour to topological dynamics.

\begin{defn}
A shift space $X$ has the \emph{$\dbar$-shadowing property} if for every $\eps>0$ there is $N\in\N$ such that for every sequence $(w^{(j)})_{j=1}^\infty$
of words in $\lang(X)$ satisfying $|w^{(j)}|\ge N$ for $j=1,2,\ldots$ there is $x'\in X$ such that setting $x=w^{(1)}w^{(2)}w^{(3)}\ldots$ we have
$\dbar(x,x')<\eps$.
\end{defn}

It is easy to see that $\dbar$-shadowing implies $\dbar$-approachability, but to prove the converse we need an additional assumption. We say that a shift space is \emph{chain transitive} (respectively, \emph{chain mixing})
if $X^M_n$ is transitive (respectively, topologically mixing) for all except finitely many $n$'s. For shift spaces this definition is equivalent to the usual one phrased in terms of $\delta$-chains (see \cite{Kurka}).

Our main result in this section characterises chain mixing $\dbar$-approachable shift spaces.
As mentioned above, this characterisation is  a topological counterpart of the result saying that a totally ergodic shift invariant probability measure is Bernoulli
if and only if its canonical $k$-step Markov approximations converge to $\mu$ with respect to $\dbarm$.

\begin{thm}\label{thm:characterisation}
For a shift space $X\subseteq\FS$ the following conditions are equivalent:
\begin{enumerate}
  \item \label{cond:i} There exists a descending sequence $(X_n)_{n=1}^\infty$ of mixing sofic shifts  such that $X=\bigcap_{n=1}^\infty X_n$ and $\Hdbar(X,X_n)\to 0$ as $n\to\infty$,
  \item \label{cond:ii} $\sigma(X)=X$ and $X$ has the $\dbar$-shadowing property,
  %\item $\sigma(X)=X$ and
  %$X$ has the $\dbar$-shadowing property,
  \item \label{cond:iii} $X$ is chain mixing and $\dbar$-approachable. %pointwise $\dbar$-approachable by its Markov approximations.
\end{enumerate}
\end{thm}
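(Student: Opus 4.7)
My plan is to close the cycle $(\ref{cond:iii}) \Rightarrow (\ref{cond:i}) \Rightarrow (\ref{cond:ii}) \Rightarrow (\ref{cond:iii})$.

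The implication $(\ref{cond:iii}) \Rightarrow (\ref{cond:i})$ is essentially bookkeeping: chain mixing says that $X^M_n$ is a topologically mixing shift of finite type (hence a mixing sofic shift) for all $n$ sufficiently large; the sequence $(X^M_n)$ is descending with $\bigcap_n X^M_n = X$ by a standard fact about Markov approximations; and $\Hdbar(X^M_n, X) \to 0$ is exactly $\dbar$-approachability. Discarding the finitely many non-mixing initial terms produces the sequence required by $(\ref{cond:i})$.

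For $(\ref{cond:i}) \Rightarrow (\ref{cond:ii})$, I would first establish surjectivity by compactness: each $X_k$ is surjective (its canonical presentation is strongly connected), so every $y \in X$ admits preimages $x_k \in X_k$ with $\sigma(x_k) = y$, and a convergent subsequence in $X_1$ gives a preimage in $\bigcap_k X_k = X$. The $\dbar$-shadowing property rests on an auxiliary claim I would isolate as a lemma: every topologically mixing sofic shift $Y$ has $\dbar$-shadowing. This follows from a uniform gluing constant $L$ coming from mixing, so that any concatenation of $\lang(Y)$-words of length $\geq N$ can be converted into a point of $Y$ by modifying at most $L$ coordinates per block (producing $\dbar$-error $\leq L/N$). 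Then, given $\eps > 0$, I would pick $k$ with $\Hdbar(X, X_k) < \eps/2$, apply the auxiliary claim inside $X_k$ (using $\lang(X) \subseteq \lang(X_k)$) to get $x'' \in X_k$ with $\dbar(x, x'') < \eps/2$, and approximate $x''$ by some $x' \in X$ within $\eps/2$, yielding $\dbar(x, x') < \eps$.

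For $(\ref{cond:ii}) \Rightarrow (\ref{cond:iii})$, the $\dbar$-approachability half is the easy direction already highlighted in the excerpt: any $y \in X^M_n$ parses canonically into consecutive blocks of length $n+1$, each in $\lang(X)$, so for $n \geq N(\eps) - 1$ the $\dbar$-shadowing property yields $x' \in X$ with $\dbar(y, x') < \eps$. Chain mixing is the main obstacle: I need to show $X^M_n$ is topologically mixing for large $n$, which translates into showing that every pair of vertices of the Rauzy graph $G_n$ is joined by paths of all sufficiently large lengths. Given $u, v \in \lang_{n+1}(X)$, I would use $\sigma(X) = X$ to extend $u$ and $v$ to long words $U, V \in \lang(X)$ of length $M \geq N$ with $u$ a prefix of $U$ and $v$ a suffix of $V$, and then apply $\dbar$-shadowing to carefully chosen periodic templates such as $(UV)^\infty$, $(UV^2)^\infty$, $(U^2 V)^\infty$ and their shifts. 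A density/pigeonhole argument (exploiting that the $(n+1)$-windows of the shadowing point $x' \in X$ agree with the target concatenation on positions of density at least $1 - (n+1)\eps$) forces exact simultaneous occurrences of $u$ and $v$ at positions dictated by the period of the template; the intervening substrings lie in $\lang(X) \subseteq \lang(X^M_n)$ and supply exact gluings in $G_n$, and varying both $M$ and the period across the template family produces a length-set with $\gcd = 1$.

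The main technical hurdle is the chain-mixing step above: making the density argument tight enough to locate exact simultaneous occurrences of $u$ and $v$ (not merely approximate matches), and choosing the templates so that their realised gluing lengths have $\gcd = 1$ and cover all sufficiently large integers. A secondary point, needed in $(\ref{cond:i}) \Rightarrow (\ref{cond:ii})$, is the auxiliary claim that mixing sofic shifts enjoy $\dbar$-shadowing, which follows from their specification/gluing properties but deserves its own lemma.
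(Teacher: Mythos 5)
Most of your cycle coincides with the paper's own proof, traversed in the same cyclic order. Your \eqref{cond:iii}$\implies$\eqref{cond:i} is the paper's one-liner (Markov approximations of a chain mixing shift are mixing sofic shifts); your auxiliary claim that mixing sofic shifts have $\dbar$-shadowing is the paper's Lemma \ref{lem:spec-dbar-shadowing}, proved by the same uniform-gluing/specification argument; your $\eps/2+\eps/2$ transfer of shadowing to the $\Hdbar$-limit (using $\lang(X)\subseteq\lang(X_k)$) is the paper's Lemma \ref{lem:dbar-lim-shadowing}; your compactness proof of $\sigma(X)=X$ is a harmless variant of the paper's argument; and the approachability half of \eqref{cond:ii}$\implies$\eqref{cond:iii} is exactly the paper's Lemma \ref{lem:dbar-shadowing-approach}, parsing a point of $X^M_n$ into blocks of uniform length. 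The one place where you genuinely diverge is the deduction of chain mixing from $\dbar$-shadowing, and there your plan has a real gap.

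Concretely: you propose to force ``exact simultaneous occurrences of $u$ and $v$ at positions dictated by the period of the template.'' In the template $y=(UV)^\infty$ the relevant occurrences sit on a single residue class modulo the period $P=|U|+|V|\geq 2N(\eps)$, whereas the guarantee $\dbar(x',y)<\eps$ only bounds the upper density of bad window-starts by $(n+1)\eps$. The shadowing property gives you no quantitative control of $N(\eps)$ in terms of $\eps$, so you cannot arrange $(n+1)\eps<1/P$; and indeed a shadowing point that disagrees with $y$ precisely at the aligned positions satisfies $\dbar(x',y)=1/P<\eps$ while destroying \emph{every} aligned occurrence, so no pigeonhole argument can locate even one, let alone control path lengths modulo $P$ for the gcd bookkeeping. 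The paper sidesteps all of this with an idea absent from your proposal: $X$ is chain mixing if and only if $X\times X$ is chain transitive, and $\dbar$-shadowing together with $\sigma(X)=X$ passes to $X\times X$; so it suffices to prove chain \emph{transitivity} (Lemma \ref{lem:dbar-shadowing-chain-mix}), where exact positions are irrelevant --- one only needs a good window somewhere inside a long $u$-prefixed block and another inside a long $w$-suffixed block (a density-$O(1)$ demand, not density $1/P$), the spliced word $y_{[\cdot,p)}x_{[p,q+n)}y_{[q+n,\cdot)}$ then being legal in the Rauzy graph; the paper also takes blocks of exponentially growing lengths $2^jn_0$ to cope with $\dbar$ being a $\limsup$. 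Your approach is repairable --- for instance, two good window-starts at distance exactly $P$ automatically read off \emph{equal} $n$-words by periodicity of $y$, yielding cycles of exact lengths $P$ and $P+1$ under density demands independent of $P$, which combined with a transitivity splice gives aperiodicity --- but that is a different argument from the one you wrote, and as stated your density argument does not close.
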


As a consequence of Theorem \ref{thm:characterisation} we obtain that a transitive, but not topologically mixing shift of finite type is an example of a $\dbar$-approachable shift space which is not chain mixing, hence it does not have the $\dbar$-shadowing property. In Example \ref{exmp:non-dbar-approach} we show that there are non-$\dbar$-approachable transitive sofic shifts. It follows that condition \ref{cond:i} in Theorem \ref{thm:characterisation} can not be relaxed.

The proof of the following lemma is straightforward. We will later generalise it in Section \ref{sec:spec}, see Lemma \ref{lem:dbar-shadow-almost-spec} there.
\begin{lem}\label{lem:spec-dbar-shadowing}
%If $X\subseteq\FS$ is a shift space with the specification property, then $X$ has the $\dbar$-shadowing property. OR
Every mixing sofic shift space has the $\dbar$-shadowing property.
\end{lem}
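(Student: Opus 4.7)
The plan is to exploit the graph-theoretic presentation of $X$. Write $X=X_G$ for some $\alf$-labelled graph $G=(V,E,\tau)$ that is strongly connected and admits two closed paths of coprime lengths. The only property of $G$ that I would use is the classical primitivity consequence: there exists $M_0\in\N$ such that for every pair of vertices $p,q\in V$ and every $n\ge M_0$ there is a path from $p$ to $q$ in $G$ of length exactly $n$.

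Given $\eps>0$, take $N\in\N$ with $N\ge M_0/\eps$. For a sequence $(w^{(j)})_{j=1}^\infty$ in $\lang(X)$ with $|w^{(j)}|\ge N$, each $w^{(j)}$ is spelled by some path $\pi_j$ in $G$; let $p_j$ and $q_j$ denote its initial and terminal vertices. In general $q_j\neq p_{j+1}$, so the pieces do not directly concatenate into a single infinite path. The key step is therefore to splice them: keep the first $|w^{(j)}|-M_0$ edges of each $\pi_j$, reaching some vertex $r_j\in V$, and replace its final $M_0$ edges by a bridge $\beta_j$ of length exactly $M_0$ from $r_j$ to $p_{j+1}$, which exists by the choice of $M_0$. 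The modified paths $\pi_j'$ have the same lengths as the $\pi_j$ and now fit together, so their concatenation is a genuine infinite path on $G$ whose label $x'$ lies in $X$.

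By construction, $x=w^{(1)}w^{(2)}\cdots$ and $x'$ agree outside the last $M_0$ positions of each block $w^{(j)}$. Since every block has length at least $N$, the density of disagreements along arbitrarily long prefixes is bounded by $M_0/N\le\eps$, yielding
\[
\dbar(x,x')\le \frac{M_0}{N}\le \eps,
\]
as required. The only point demanding justification is the uniform bridging bound $M_0$, which is the standard primitivity statement for finite irreducible aperiodic directed graphs and is precisely what is supplied by the paper's characterisation of mixing sofic shifts; once $M_0$ is in hand, the remainder of the argument is bookkeeping.
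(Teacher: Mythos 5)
Your proof is correct, and at its core it is the same splicing scheme as the paper's: sacrifice a bounded-length suffix of each block $w^{(j)}$ and replace it by a bridge of fixed length, so that the density of alterations is at most (bridge length)$/N<\eps$. The difference lies in the key input. The paper works purely at the level of the language: it invokes the specification property of mixing sofic shifts (a uniform $k$ such that for all $u,w\in\lang(X)$ some $v$ of length exactly $k$ gives $uvw\in\lang(X)$), truncates each $w^{(j)}$ by $k$ symbols, and asserts that the infinite concatenation $u^{(1)}v^{(1)}u^{(2)}v^{(2)}\ldots$ lies in $X$. You instead work with the presenting graph and derive the uniform bridging constant $M_0$ from primitivity of the strongly connected, aperiodic graph (which is indeed the standard Perron--Frobenius--Wielandt fact, and your coprime-cycles hypothesis is exactly the paper's characterisation of mixing sofic shifts). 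What your route buys is that the infinite concatenation issue is handled automatically: your spliced object is a single genuine infinite path on $G$, whose label is in $X_G=X$ by definition, whereas the paper's language-level argument, as stated, silently needs the pairwise specification to be iterated infinitely often (justified for sofic shifts by tracking vertices or by a compactness argument --- in effect, your argument). The cost is that your proof is tied to the graph presentation, while the paper's version generalises verbatim to any shift with (almost) specification, which is precisely how the paper later upgrades it in Lemma \ref{lem:dbar-shadow-almost-spec}. One cosmetic point: for $\eps\le 1$ your choice $N\ge M_0/\eps$ also guarantees $N\ge M_0$, so the truncation is well defined; for $\eps>1$ the claim is vacuous since $\dbar\le 1$.
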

\begin{proof}
Recall that the condition saying that $X$ is a mixing sofic shift space implies that there exists $k\ge 0$ such that for every two words $u,w\in\lang(X)$ one can find\footnote{This property is called the \emph{specification property} and we discuss it in more detail in Section \ref{sec:spec}.} a word $v$ of length $k$ such that $uvw\in\lang(X)$. Fix $\eps>0$. Let $N$ be such that  $k/N<\eps$. Let $(w^{(j)})_{j=1}^\infty$ be a sequence
of words in $\lang(X)$ satisfying $|w^{(j)}|\ge N$ for $j=1,2,\ldots$. For $j=1,2,\ldots$, we set $\ell(j)=|w^{(j)}|$ and let $u^{(j)}=w^{(j)}_1\ldots w^{(j)}_{\ell(j)-k}$ be the prefix of $w^{(j)}$ of length $\ell(j)-k$. Using the above-mentioned property of mixing sofic shifts we can find a sequence $(v^{(j)})_{j=1}^\infty$ of words of length $k$ such that
\[
x=u^{(1)}v^{(1)}u^{(2)}v^{(2)}\ldots u^{(n)}v^{(n)}\ldots
\]
belongs to $X$. We also have
\[
\dbar(x,w^{(1)}w^{(2)}\ldots w^{(n)}\ldots)\le k/N<\eps. \qedhere
\]
\end{proof}

The proof of the next lemma is adapted from the proof of \cite[Lemma 25]{KLO2}.

\begin{lem}\label{lem:dbar-lim-shadowing}If $X\subseteq\FS$ is a shift space such that there exists a sequence $(X_n)_{n=1}^\infty$ of shift spaces with the $\dbar$-shadowing property such that $X\subseteq X_n$ for every $n\in\N$ and $\Hdbar(X,X_n)\to 0$ as $n\to\infty$, then $X$ also has $\dbar$-shadowing.
\end{lem}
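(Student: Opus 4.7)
The plan is to exploit the fact that $\dbar$ satisfies the triangle inequality and combine $\dbar$-shadowing in one of the approximating spaces with the $\Hdbar$-closeness between $X_n$ and $X$. Fix $\eps>0$. I would first choose $n$ large enough that $\Hdbar(X,X_n)<\eps/3$, which is possible by the hypothesis $\Hdbar(X,X_n)\to 0$. Since $X_n$ has the $\dbar$-shadowing property, I can then apply that property with parameter $\eps/3$ to obtain a threshold $N=N_n$ witnessing shadowing in $X_n$.

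Next, let $(w^{(j)})_{j=1}^\infty$ be any sequence of words in $\lang(X)$ with $|w^{(j)}|\ge N$. Because $X\subseteq X_n$, every $w^{(j)}$ also lies in $\lang(X_n)$, so by $\dbar$-shadowing in $X_n$ there exists $y\in X_n$ with $\dbar(w^{(1)}w^{(2)}\ldots,\,y)<\eps/3$. Now I use the $\Hdbar$-closeness: since $y\in X_n$ and $\Hdbar(X,X_n)<\eps/3$, the very definition of the Hausdorff pseudometric gives $\inf_{x\in X}\dbar(y,x)<\eps/3$, so I can pick $x'\in X$ with $\dbar(y,x')<\eps/3$. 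The triangle inequality for the pseudometric $\dbar$ then yields
\[
\dbar(w^{(1)}w^{(2)}\ldots,\,x')\;\le\;\dbar(w^{(1)}w^{(2)}\ldots,\,y)+\dbar(y,x')\;<\;\tfrac{2\eps}{3}\;<\;\eps,
\]
which is exactly what the $\dbar$-shadowing property for $X$ requires.

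There is essentially no serious obstacle here; the argument is a two-step triangle estimate. The only subtle points to check are bookkeeping: the hypothesis $X\subseteq X_n$ is used to guarantee that the given words actually lie in $\lang(X_n)$ so that $\dbar$-shadowing in $X_n$ can be invoked; and the fact that infima in the Hausdorff pseudometric need not be attained is harmless because $\Hdbar(X,X_n)<\eps/3$ is a strict inequality, so an $x'\in X$ with $\dbar(y,x')<\eps/3$ exists by the definition of infimum. Packaging the constants $\eps/3,\eps/3$ as above (or, if one prefers, $\eps/4$ and $\eps/2$) gives the required $N$ depending only on $\eps$.
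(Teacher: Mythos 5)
Your argument is correct and is essentially identical to the paper's proof: both fix a single $X_n$ with $\Hdbar(X,X_n)$ small, invoke $\dbar$-shadowing in that $X_n$ (using $\lang(X)\subseteq\lang(X_n)$, which follows from $X\subseteq X_n$), and then transfer the shadowing point back into $X$ via the Hausdorff bound and the triangle inequality for the pseudometric $\dbar$. The only difference is cosmetic --- you split $\eps$ as $\eps/3+\eps/3$ where the paper uses $\eps/2+\eps/2$ --- and your remark that the strict inequality $\Hdbar(X,X_n)<\eps/3$ guarantees a witness $x'\in X$ despite the infimum possibly not being attained is a valid point the paper leaves implicit.
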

\begin{proof}
Fix $\eps>0$. Let $N\in\N$ be such that for every $n\ge N$ we have
\begin{equation}\label{ineq:one-3}
\Hdbar(X,X_n)<\eps/2.
\end{equation}
Use $\dbar$-shadowing of $X_N$ to find $M\ge 0$ such that if $y=w^{(1)}w^{(2)}w^{(3)}\ldots\in\FS$
satisfies $w^{(j)}\in\lang(X_N)$ and $|w^{(j)}|\ge M$ for every $j\in\N$ then one can find $\bar{x}\in X_N$ satisfying
\begin{equation}\label{ineq:two-3}
\dbar(\bar{x},y)<\eps/2.
\end{equation}
Since $\lang(X)\subseteq\lang(X_N)$, we can apply \eqref{ineq:one-3} and \eqref{ineq:two-3} to $y=w^{(1)}w^{(2)}w^{(3)}\ldots\in\FS$
satisfying $w^{(j)}\in\lang(X)$ and $|w^{(j)}|\ge M$ for every $j\in\N$. It follows that there is $x\in X$ such that $\dbar(x,y)<\eps$.
\end{proof}

The proof of the following lemma is adapted from the proof of \cite[Lemma 3.1]{KKO}.

\begin{lem}\label{lem:dbar-shadowing-chain-mix}
If a shift space $X\subseteq\FS$ has the $\dbar$-shadowing property and $\sigma(X)=X$, then $X$ is chain mixing.
\end{lem}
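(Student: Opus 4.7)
My plan is to establish chain mixing by showing that for every sufficiently large $n$, the $n$-th topological Markov approximation $X^M_n$ is topologically mixing. Since $X^M_n$ is the SFT presented by the $n$-th Rauzy graph $G_n$ of $X$, this reduces to checking two things about $G_n$: strong connectivity, and that the gcd of its cycle lengths equals $1$.

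The key construction: given $u, v \in \lang_n(X)$, I would use surjectivity $\sigma(X) = X$ to extend $u$ to the left and $v$ to the right to longer words $U, V \in \lang(X)$ with $U$ ending in $u$ and $V$ starting with $v$, choosing the common length $L \ge N_\eps$ for some small $\eps > 0$ to be fixed later. I then apply $\dbar$-shadowing to the periodic target $z = UVUVUV\cdots$ to obtain $x \in X$ with $\dbar(x, z) < \eps$. In $z$, the marker $u$ occurs at the end of every $U$-block and $v$ at the start of every $V$-block, each at asymptotic density $\tfrac{1}{2L}$. A disagreement $x_i \neq z_i$ spoils at most one marker position (since the markers are spaced $2L \ge n$ apart), so for parameters tuned so that not all marker positions can be spoiled, one extracts indices $p_u < p_v$ with $x_{[p_u, p_u+n)} = u$ and $x_{[p_v - n, p_v)} = v$. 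The corresponding subword $x_{[p_u, p_v)}$ lies in $\lang(X) \subseteq \lang(X^M_n)$ and produces a directed walk from $u$ to $v$ in $G_n$, giving strong connectivity. To promote this to coprime cycle lengths, I would repeat the construction with a second template in which $V$ is replaced by an extension $V'$ of length $L+1$, yielding walks of lengths in arithmetic progressions with consecutive, hence coprime, common differences $2L$ and $2L+1$; the Sylvester--Frobenius theorem then makes the set of available walk lengths between any two vertices cofinite in $\N$, forcing $X^M_n$ to be topologically mixing.

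The main obstacle I anticipate is the quantitative balancing of the three parameters $\eps$, $N_\eps$, and $L$. A naive density count requires $2L\eps < 1$ to ensure that some marker positions survive in the shadow $x$, which combined with the constraint $L \ge N_\eps$ amounts to the non-trivial relation $\eps N_\eps < \tfrac12$ — not a direct consequence of bare $\dbar$-shadowing. Following the template of \cite[Lemma 3.1]{KKO}, one sidesteps this by refining the target $z$, for instance by interspersing additional long filler blocks so that the density of the $u$- and $v$-markers can be decoupled from $N_\eps$, or by iterating the shadowing procedure so that the relevant densities can always be made to dominate the error density regardless of how quickly $N_\eps$ grows as $\eps \to 0$. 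Executing this parameter calibration cleanly is the technical heart of the proof.
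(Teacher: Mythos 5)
Your overall strategy (shadow a concatenated target containing $u$- and $v$-markers, locate windows where the shadowing point agrees with the target, and splice to obtain walks in the Rauzy graph) is the same engine as the paper's proof, but your proposal has a genuine gap exactly where you flag it, and the remedies you sketch do not close it. With a periodic template $z=UVUV\cdots$ of period $2L$ the markers have density $1/L$, so a mismatch budget of $\eps$ can spoil \emph{every} marker unless $\eps L<1$; since shadowing forces $L\ge N_\eps$, you need $\eps N_\eps<1$, which bare $\dbar$-shadowing never supplies, as you correctly observe. But your first proposed fix, interspersing long filler blocks, makes matters strictly worse: fillers dilute the marker density further while leaving the error budget $\eps$ untouched; and ``iterating the shadowing procedure'' is not yet an argument. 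The missing idea, which is how \cite[Lemma 3.1]{KKO} and the paper's actual proof work, is to let the block lengths grow geometrically: set $n_0=\max\{N,n\}$ and $n_j=2^jn_0$, let even-indexed blocks begin with $u$ and odd-indexed blocks end with $w$ (the suffix extension is precisely where $\sigma(X)=X$ is used), and fix $\eps<1/(4n)$ \emph{before} invoking shadowing. If for every $j$ one of the two blocks in the $j$-th pair contains no length-$n$ window on which $x$ agrees with $y$, then that block carries at least roughly $n_{2j}/n$ mismatches, and by the doubling it occupies a definite fraction (about a quarter) of the entire prefix ending at $\xi(2j+2)$; hence $\dbar(x,y)\ge 1/(4n)>\eps$, a contradiction whose threshold depends only on $n$ and never on $N_\eps$. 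This is exactly the decoupling you were looking for, and it cannot be achieved with a template of fixed period.

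A secondary structural difference: you aim to prove mixing of $X^M_n$ directly via two templates of coprime periods $2L$ and $2L+1$ together with Sylvester--Frobenius. Besides inheriting the quantitative problem twice, this needs control over \emph{which} markers survive: the density count only yields some surviving pair, hence some walk length in a fixed coset, not all residues, so extracting two cycles of coprime lengths requires extra care. The paper avoids this layer entirely: it proves only chain \emph{transitivity}, then upgrades to chain mixing via the standard fact that $X$ is chain mixing if and only if $X\times X$ is chain transitive, noting that $\dbar$-shadowing and surjectivity pass to $X\times X$. Adopting both the product trick and the geometrically growing blocks would turn your sketch into a complete proof.
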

\begin{proof}
%Fix $\eps>0$ and use Definition \ref{def:dbar-shadowing} to find $N\in\N$. We will prove that $\Hdbar(X,X_n)<\eps$ for $n\ge N$. Consider $y\in X_n$ for some $n\ge N$. By  definition of Markov approximation  we can write $y$ as an infinite concatenation of words from $\lang_{n-1}(X)$, that is we have $y=w^{(1)}w^{(2)}w^{(3)}\ldots$, and $w^{(j)}\in\lang_{n-1}(X)$ for each $j\in\N$. Using $\dbar$-shadowing we see that there is $x\in X$ with $\dbar(x,y)<\eps$.
Assume that $\sigma(X)=X$. Recall that $X$ is chain mixing if and only if $X\times X$ is chain transitive. Note that it is enough to prove that $X$ is chain transitive, because if $X$ has $\dbar$-shadowing  and $\sigma(X)=X$, then the same holds for $X\times X$. %and $\sigma\times\sigma(X\times X)=X\times X$.
Fix $n$. We will show that the Markov approximation $X_n$ is transitive, that is, for every $u,w\in\lang_{n-1}(X)$ there is a word $v$ such that every word of length $n$ appearing in $uvw$ belongs to $\lang_n(X)$. Fix $0<\eps<1/(4n)$. Use $\dbar$-shadowing to find $N$ for that $\eps$. Set $n_0:=\max\{N,n\}$ and $n_j=2^jn_0$ for $j\in\N$. For $j\ge 0$ let $w^{(2j)}$ be any word in $\lang(X)$ such that $|w^{(2j)}|=n_{2j}$ and $u$ is the prefix of $w^{(2j)}$ and let $w^{(2j+1)}$ be any word in $\lang(X)$ such that $|w^{(2j+1)}|=n_{2j+1}$ and $w$ is the suffix of $w^{(2j+1)}$ (the latter word exists because $\sigma(X)=X$). Thus for every $j\in\N_0$ we have words $u^{(2j)}$, $v^{(2j+1)}$ such that
\[
w^{(2j)}=uu^{(2j)}\quad\text{and}\quad w^{(2j+1)}=v^{(2j+1)}w.
\]
Let $y=w^{(0)} w^{(1)}w^{(2)}\ldots$. Write $\xi(0)=0$ and
\[\xi(k+1)=|w^{(0)}|+\ldots+|w^{(k)}|.\]
Therefore
\[
y_{[\xi(2j),\xi(2j+1))}y_{[\xi(2j+1),\xi(2j+2))}=w^{(2j)}w^{(2j+1)}=uu^{(2j)}v^{(2j+1)}w.
\]
By the $\dbar$-shadowing property there is $x\in X$ such that $\dbar(x,y)<\eps$. We claim that there exists $j\ge 0$ such that for some $\xi(2j)+n\le p <\xi(2j+1)-n$ and $\xi(2j+1) \le q <\xi(2j+2)-2n$ we have
\[
x_{[p,p+n)}= y_{[p,p+n)}\quad\text{and}\quad x_{[q,q+n)}= y_{[q,q+n)}.
\]
It follows that $y_{[\xi(2j)p)}x_{[p,q+n)}y_{[q+n,\xi(2j+2))}\in\lang(X_n)$. If the claim is false, then easy computations show that $\dbar(x,y)\ge \eps$, which would be a contradiction. This completes our proof.
\end{proof}

\begin{lem}\label{lem:dbar-shadowing-approach}
Every shift space with the $\dbar$-shadowing property is $\dbar$-approachable.
%If $X$ is a shift space with the $\dbar$-shadowing, then for every $\eps>0$ there is $N\in\N$ such that for every $n\ge N$ the finite type approximation $X^M_n$ for $X$ satisfies $\Hdbar(X,X^M_n)<\eps$.
\end{lem}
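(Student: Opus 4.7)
The plan is to exploit the defining property of the Markov approximations $X^M_n$: they agree with $X$ on all words of length at most $n+1$, and they contain $X$. The first observation is that since $X \subseteq X^M_n$, the Hausdorff $\dbar$-distance simplifies to
\[
\Hdbar(X,X^M_n) = \sup_{y \in X^M_n} \dbar(y,X),
\]
because the supremum over $x\in X$ of $\dbar(x, X^M_n)$ vanishes. Thus it suffices to show that for every $\eps > 0$ one can find $n$ such that every $y \in X^M_n$ is $\dbar$-shadowed within $\eps$ by some $x \in X$.

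Given $\eps > 0$, apply the $\dbar$-shadowing property of $X$ to obtain an integer $N$ such that any concatenation of $\lang(X)$-words of length at least $N$ can be $\dbar$-shadowed within $\eps$ by a point of $X$. Now fix any $n \ge N - 1$, so that $N \le n+1$, and take an arbitrary $y \in X^M_n$. Split $y$ into consecutive blocks $w^{(1)}, w^{(2)}, w^{(3)}, \ldots$ of length exactly $N$, so that $y = w^{(1)}w^{(2)}w^{(3)}\ldots$. Since each $w^{(j)}$ has length $N \le n+1$ and appears in $y \in X^M_n$, we have $w^{(j)} \in \lang_N(X^M_n) = \lang_N(X) \subseteq \lang(X)$, because $X^M_n$ and $X$ have the same words of length at most $n+1$ by construction of the Markov approximation.

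Thus the sequence $(w^{(j)})_{j=1}^\infty$ consists of $\lang(X)$-words of length $\ge N$, and by the $\dbar$-shadowing property there exists $x \in X$ with $\dbar(x,y) < \eps$. Consequently $\dbar(y,X) < \eps$. Taking the supremum over $y \in X^M_n$ we conclude $\Hdbar(X, X^M_n) \le \eps$ for all $n \ge N - 1$, which is exactly $\dbar$-approachability. No real obstacle arises here; the only point to check is the simple reduction of the Hausdorff distance using $X \subseteq X^M_n$, after which the argument is a direct application of $\dbar$-shadowing to the canonical block decomposition of an arbitrary point of $X^M_n$.
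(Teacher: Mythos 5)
Your proof is correct and follows essentially the same route as the paper: both reduce $\Hdbar(X,X^M_n)$ to the one-sided supremum using $X\subseteq X^M_n$, decompose an arbitrary point of $X^M_n$ into blocks short enough (at most $n+1$) to lie in $\lang(X)$ by the defining property of the Markov approximation, and then apply $\dbar$-shadowing. The only cosmetic difference is that you cut into blocks of length exactly $N$ (allowing $n\ge N-1$) while the paper cuts into blocks of length $n$ for $n\ge N$; both choices work equally well.
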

\begin{proof}Let $X$ be a shift space with the $\dbar$-shadowing property. We need to show that $\Hdbar(X^M_n,X)\to 0$ as $n\to\infty$. Fix $\eps>0$. For this $\eps$ we choose $N$ using the $\dbar$-shadowing property. Let $n\ge N$.
Since $X\subseteq X^M_n$ it is enough to check that if $x\in X^M_n$ then one can find $y\in X$ with $\dbar(x,y)<\eps$. Hence we fix $x\in X^M_n$
and define the sequence $w^{(j+1)}=x_{[nj,n(j+1))}=x_{nj}x_{nj+1}\ldots x_{n(j+1)-1}$ for $j=0,1,2,\ldots$. Since for each $k\ge 1$ we have $w^{(k)}\in\lang_n(X)$, because $\lang_n(X_n)=\lang_n(X)$, we can apply $\dbar$-shadowing to find $y\in X$ such that
\[
\dbar(x,y)=\dbar(w^{(1)}w^{(2)}w^{(3)}\ldots,y)<\eps.
\]
It follows $\Hdbar(X^M_n,X)<\eps$ for $n\ge N$ as needed.
\end{proof}

We finish this section with the proof of the characterisation theorem.
\begin{proof}[Proof of Theorem \ref{thm:characterisation}.]
\eqref{cond:i}$\implies$\eqref{cond:ii} First, note that $\sigma(X)=X$ because $\sigma(X_n)=X_n$ whenever $X_n$ is a mixing sofic shift and $\sigma$ is finite-to-one.
Note that for every $n\in\N$ the  shift space $X_n$ has the $\dbar$-shadowing property by Lemma \ref{lem:spec-dbar-shadowing}. Then we apply Lemma \ref{lem:dbar-lim-shadowing}.
\eqref{cond:ii}$\implies$\eqref{cond:iii} It follows from Lemmas \ref{lem:dbar-shadowing-approach} and \ref{lem:dbar-shadowing-chain-mix}. \eqref{cond:iii}$\implies$\eqref{cond:i} It is straightforward consequence of the fact that Markov approximations for a chain mixing shift space are mixing sofic shifts.
\end{proof}

In the next section we will see that if a shift space $X \subseteq \FS$ is $\dbar$-approachable and chain transitive, then its ergodic measures are entropy-dense.

\begin{exmp}\label{exmp:non-dbar-approach}
We show that even very simple transitive sofic shift may have Markov approximations that are away in $\dbar$. This example also shows that we cannot replace mixing sofic shifts in Theorem  \ref{thm:characterisation} by transitive sofic shifts.

%An example is the hereditary closure $\tilde X_{\BB}$ for $\BB=\{2\}$.
The shift space $X$ is defined as the set of all sequences $x\in\{0,1\}^\infty$ such that $x\in X$ if and only if $x_{2i}=0$ for every $i\ge 0$ or $x_{2i+1}=0$ for every $i\ge 0$. Clearly, $X=\tilde{X}$ is an hereditary transitive sofic shift, which is not topologically mixing. Given $k\in\N$, we define a periodic point
\[y^{(k)}=y_0^{(k)}y_1^{(k)}\ldots=((10)^{k+1} 0^{2k+1})^\infty.\]
We note that $y^{(k)}\in X^M_{2k}$ for every $k\in\N$. By definition, for every $k\in\mathbb{N}$ and $j>0$, the symbol $1$ occurs  $2j(k+1)$ times in the block $y^{(k)}_{[0,2j(4k+3))}$. These occurrences are evenly distributed among even and odd positions. Therefore for every $x\in \tilde X=X$ and $j\in\mathbb{N}$ we have
\[
\dHam(y^{(k)}_{[0,2j(4k+3)} x_{[0,2j(4k+3)})\ge \frac{j(k+1)}{j(8k+6)}\geq \frac18,\]
%since all even or all odd positions in $x$ are occupied by the symbol $0$. Since we control the distance between the prefixes of $x$ and $y$ of lenghts $2j(4k+3)$, $j\in\N$, with bounded gaps, the stronger inequality takes place:
It follows immediately that for every $k\in\mathbb{N}$ we have
\[\dund(y^{(k)},x)\geq \frac18.
\]
Therefore for each $k\in\mathbb{N}$ it holds
\[\Hdund(X,X^M_{2k})\ge\frac18,\]
that is, $X$ is an example of  a non-$\dbar$-approachable transitive sofic shift.
In addition, since every point $y^{(k)}$ is generic for an ergodic measure $\nu^{(k)}$ on $X^M_{2k}$ we can conclude that for each $k\in\mathbb{N}$ it holds
\[\Hdbarm(\Ms(X),\Ms(X^M_{2k}))\ge \frac18.\]
%One can likely elaborate this example to show that the hereditary closure of %X_{\BB}$ are not $\dbar$-approachable for $\BB\subset m\N$, for some $m\geq 2$, whose density goes to zero fast.

It means that we can replace Markov approximation by the approximation by sofic shifts only if these sofic shift are mixing as in condition \ref{cond:i} of Theorem  \ref{thm:characterisation}.
\end{exmp}

\section{Approximation and entropy-density}
\label{sec:main}
\label{sec:main-main}

In this section, we discuss approximation schemes of shift spaces related to pseudometric $\dbar$ and metric $\dbarm$. Then we
show that these approximation schemes allow for a transfer of entropy density.

Recall that a sequence of shift spaces $(X_n)_{n=1}^\infty\subseteq\FS$ converges to a shift space $X\subseteq \FS$ in the usual hyperspace topology if $\rho^H(X_n,X)\to 0$ as $n\to\infty$, where $\rho^H$ is the Hausdorff metric corresponding to any metric $\rho$ compatible with the product topology on $\FS$. Similarly, we define the convergence in the usual hyperspace topology of $\Ms(\FS)$, and we have that $\Ms(X_n)$ converges to $\Ms(X)$ as $n\to \infty$ if
$D^H(X_n,X)\to 0$ as $n\to\infty$, where $D^H$ is the Hausdorff metric corresponding to an arbitrary metric $D$ compatible with the weak$^*$ topology on $\Ms(\FS)$
%given above
(we can replace $D$ by any other compatible metric).

We will consider four new ways in which a sequence of shift spaces $(X_n)_{n=1}^\infty\subseteq\FS$ can approximate a shift space $X\subseteq \FS$. These ways are given by the following conditions
\begin{align}%\label{eq:four}
  \lim_{n\to\infty}&\Hdbarm(\Ms(X_n),\Ms(X))=0,\label{four-modes-1}\\
  \lim_{n\to\infty}&\Hdbarm(\Mse(X_n),\Mse(X))=0, \label{four-modes-2}\\
  \lim_{n\to\infty}&\Hdund(X_n,X)=0,\label{four-modes-3}\\
  \lim_{n\to\infty}&\Hdbar(X_n,X)=0. \label{four-modes-4}
\end{align}
Our aim is to show that
\[
\eqref{four-modes-4}\implies\eqref{four-modes-3}\implies(\eqref{four-modes-2}\Longleftrightarrow\eqref{four-modes-1}),
\]
and any of these modes of convergence allows us to infer entropy-density for $X$ provided that entropy-density holds for $X_n$ for all $n$.
We stress that our scheme does not require the approximation to be monotone, that is we do not require  $X_1\supseteq X_2 \supseteq \ldots$ and $X=\bigcap X_n$, although in practice it often is. Bearing in mind future applications, we also include a result (Corollary \ref{gen-scheme-summ}) dealing with monotone limits of shift spaces.

Our first main result states that $\Hdbarm$-convergence for simplices of invariant measures given by \eqref{four-modes-1}  preserves entropy-density.

\begin{thm}\label{thm:gen-scheme}
Let $(X_k)_{k=1}^\infty$ and $X$ be shift spaces over $\alf$ such that
\[\Hdbarm(\Ms(X_k),\Ms(X)) \to 0 \quad\text{ as } k \to \infty.\]
If ergodic measures are entropy-dense in $\Ms(X_k)$ for each $k \in \N$,  then ergodic measures are entropy-dense in $\Ms(X)$.
\end{thm}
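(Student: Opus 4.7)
The plan is to deduce entropy-density on $X$ by a three-step approximation: first pass from $\mu \in \Ms(X)$ to a nearby $\mu_k \in \Ms(X_k)$ in $\dbarm$; second, invoke entropy-density inside $\Ms(X_k)$ to reach an ergodic $\nu_k \in \Mse(X_k)$ that is weak$^*$-close to $\mu_k$ and has nearby entropy; third, travel back from $\nu_k$ into $\Ms(X)$ via the Hausdorff hypothesis and extract an ergodic piece by an ergodic-decomposition argument. Continuity of the entropy map on $(\Ms(\FS),\dbarm)$, together with the fact that the $\dbarm$-topology is finer than weak$^*$, will control both the weak$^*$-distance and the entropy gap at the end.

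More concretely, fix $\mu \in \Ms(X)$, a weak$^*$-neighborhood $U$ of $\mu$ and $\eps > 0$. Choose a metric $D$ compatible with weak$^*$ on $\Ms(\FS)$ and a radius $r>0$ with $B_D(\mu,r) \cap \Ms(X) \subseteq U$; then pick $\delta > 0$ small enough that $\dbarm(\alpha,\beta) < \delta$ forces $D(\alpha,\beta) < r/3$ and $|h(\alpha)-h(\beta)| < \eps/3$. Take $k$ so large that $\Hdbarm(\Ms(X_k), \Ms(X)) < \delta/3$ and choose $\mu_k \in \Ms(X_k)$ with $\dbarm(\mu_k, \mu) < \delta/3$. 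Entropy-density in $\Ms(X_k)$ supplies $\nu_k \in \Mse(X_k)$ with $D(\nu_k,\mu_k) < r/3$ and $|h(\nu_k) - h(\mu_k)| < \eps/3$. Using the Hausdorff hypothesis once more, choose $\nu'_k \in \Ms(X)$ with $\dbarm(\nu'_k, \nu_k) < \delta/3$.

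Since $\nu'_k$ need not be ergodic, I would take a joining $\lambda \in J(\nu_k, \nu'_k)$ with $\int d_0\,d\lambda < 2\delta/3$ and its $\sigma \times \sigma$-ergodic decomposition $\lambda = \int \lambda_s\,d\rho(s)$. Each $\lambda_s$ is $\sigma\times\sigma$-ergodic, so both of its marginals are $\sigma$-ergodic; the first marginals average to $\nu_k$, which by uniqueness of the $\sigma$-ergodic decomposition combined with $\sigma$-ergodicity of $\nu_k$ forces the first marginal of $\lambda_s$ to equal $\nu_k$ for $\rho$-a.e.\ $s$. Each second marginal $\beta_s$ is then a $\sigma$-ergodic component of $\nu'_k$; since $\nu'_k \in \Ms(X)$ and $X$ is closed and shift-invariant, $\beta_s \in \Mse(X)$. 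Fubini yields $\int \dbarm(\nu_k,\beta_s)\,d\rho(s) \le \int d_0\,d\lambda < 2\delta/3$, so some $s$ provides $\nu^* := \beta_s \in \Mse(X)$ with $\dbarm(\nu^*, \nu_k) < 2\delta/3 < \delta$.

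Two triangle inequalities finish the argument: in $D$ one gets $D(\nu^*,\mu) \le D(\nu^*,\nu_k) + D(\nu_k,\mu_k) + D(\mu_k,\mu) < r/3 + r/3 + r/3 = r$, so $\nu^* \in U$; for entropy, the three comparisons give $|h(\nu^*) - h(\mu)| < \eps/3 + \eps/3 + \eps/3 = \eps$. The main obstacle is the ergodic-decomposition step: one must verify carefully that a near-optimal joining of the ergodic $\nu_k$ with the possibly non-ergodic $\nu'_k$ splits through the ergodic decomposition of $\nu'_k$, so that the $\dbarm$-bound passes through the integral. This is standard in joining theory and rests on uniqueness of the $\sigma$-ergodic decomposition, but it is the one nontrivial ingredient beyond the definitions and the continuity properties of $h$ and $\dbarm$ recalled in Section~\ref{sec:dbar}.
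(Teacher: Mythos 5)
Your proposal is correct and follows essentially the same route as the paper: the same three-step triangle argument driven by uniform continuity of entropy in $\dbarm$ and the fact that $\dbarm$-closeness forces weak$^*$-closeness, with the ergodic-decomposition of a near-optimal joining that you perform inline being precisely the content of the paper's Lemma \ref{lem:pseudogeneric-dbar}, which the paper packages (via Lemma \ref{lem:dm-eq-de}) as the equality $\Hdbarm(\Ms(X),\Ms(Y))=\Hdbarm(\Mse(X),\Mse(Y))$ before invoking it in the proof of Theorem \ref{thm:gen-scheme}. The only difference is organizational: you use just the one-sided statement that an ergodic measure is $\dbarm$-as-close to $\Mse(X)$ as to $\Ms(X)$, whereas the paper proves and records the full Hausdorff-distance equality for reuse elsewhere.
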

%\end{conj}

A key component of the proof of Theorem \ref{thm:gen-scheme} is the fact that for every shift space $X$, we have the equality \[\Hdbarm(\Ms(X_k),\Ms(X))=\Hdbarm(\Mse(X_k),\Mse(X)),\]
which implies the equivalence $\eqref{four-modes-2}\Longleftrightarrow\eqref{four-modes-1}$.
This is established in the two following lemmas.
%Before we proceed to the proof of Theorem \ref{thm:gen-scheme}, we need to prove the next two lemmas to conclude  that for every shift space $X$ we have the equality $\Hdbarm(\Mse(X_k),\Mse(X)) = \Hdbarm(\Ms(X_k),\Ms(X))$.

\begin{lem}\label{lem:pseudogeneric-dbar}
Let $Y\subseteq \FS$ be a shift space and let $\mu \in \Mse(\FS)$ be an ergodic measure.
Then %for any measure $\nu \in \Ms(Y)$, there exists an ergodic measure $\nu' \in \Mse(Y)$ such that $\dbarm(\mu,\nu')\leq \dbarm(\mu,\nu)$.
%In particular,
$\dbarm(\mu,\Mse(Y))=\dbarm(\mu,\Ms(Y)).$
\end{lem}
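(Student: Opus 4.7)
The plan is to exploit the ergodic decomposition together with the fact that when one of the two measures being joined is ergodic, any joining decomposes into joinings of $\mu$ with the ergodic components of the other measure. One inequality is free: since $\Mse(Y)\subseteq\Ms(Y)$, we have $\dbarm(\mu,\Ms(Y))\le\dbarm(\mu,\Mse(Y))$, so everything comes down to proving the reverse inequality. For this, it suffices to show that for every $\nu\in\Ms(Y)$ there exists some $\nu'\in\Mse(Y)$ with $\dbarm(\mu,\nu')\le\dbarm(\mu,\nu)$.

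First I would fix $\nu\in\Ms(Y)$ and write its ergodic decomposition $\nu=\int\nu_\omega\,dP(\omega)$; since $\nu(Y)=1$ and $Y$ is shift-invariant, $P$-a.e.\ component $\nu_\omega$ sits in $\Mse(Y)$. The key step is then the identity
\[
\dbarm(\mu,\nu)=\int\dbarm(\mu,\nu_\omega)\,dP(\omega),
\]
whose proof uses ergodicity of $\mu$ twice. For $\ge$, pick any joining $\lambda\in J(\mu,\nu)$, take its ergodic decomposition $\lambda=\int\lambda_\omega\,dP'(\omega)$, and observe that the first marginals of the ergodic components $\lambda_\omega$ are all ergodic measures whose $P'$-barycenter is $\mu$; since $\mu$ is ergodic, uniqueness of the ergodic decomposition forces the first marginal of $\lambda_\omega$ to equal $\mu$ for $P'$-a.e.\ $\omega$. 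The second marginals then furnish the ergodic decomposition of $\nu$, so after relabelling $\lambda_\omega\in J(\mu,\nu_\omega)$ and
\[
\int d_0\,d\lambda=\int\!\!\int d_0\,d\lambda_\omega\,dP(\omega)\ge\int\dbarm(\mu,\nu_\omega)\,dP(\omega).
\]
Minimizing over $\lambda$ gives $\ge$. For $\le$, given $\varepsilon>0$ select joinings $\lambda_\omega\in J(\mu,\nu_\omega)$ almost optimal (measurability of the selection is standard); then $\lambda=\int\lambda_\omega\,dP(\omega)\in J(\mu,\nu)$ and integrating yields $\dbarm(\mu,\nu)\le\int\dbarm(\mu,\nu_\omega)\,dP(\omega)+\varepsilon$.

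Once the identity is in hand, the conclusion is immediate: the integrand $\dbarm(\mu,\nu_\omega)$ must be $\le\dbarm(\mu,\nu)$ on a set of positive $P$-measure, so some ergodic component $\nu'=\nu_\omega\in\Mse(Y)$ satisfies $\dbarm(\mu,\nu')\le\dbarm(\mu,\nu)$. Taking the infimum over $\nu\in\Ms(Y)$ gives $\dbarm(\mu,\Mse(Y))\le\dbarm(\mu,\Ms(Y))$, which combined with the trivial inequality closes the proof.

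The main obstacle is the ergodic decomposition of joinings argument: one must be careful that both marginals of the ergodic components of $\lambda$ are themselves ergodic (this requires the ergodic decomposition to be compatible with factor maps, i.e.\ the pushforward of the ergodic decomposition is an ergodic decomposition of the pushforward), and then invoke uniqueness of the ergodic decomposition of $\mu$ to pin the first marginal down. The measurable selection needed in the $\le$ direction is a minor technical point that one can either cite from the joinings literature or sidestep by noting that $\dbarm(\mu,\nu_\omega)$ is upper semicontinuous in $\nu_\omega$ along affine combinations, so an $\varepsilon$-loss in the integrand is enough.
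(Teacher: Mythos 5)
Your proof is correct and rests on the same key mechanism as the paper's: decompose a joining of $\mu$ and $\nu$ into ergodic components, use ergodicity of $\mu$ (uniqueness of the ergodic decomposition, plus the fact that the equivariant projection pushes ergodic components to ergodic measures) to force the first marginal of almost every component to be $\mu$, and take the second marginal of a good component as $\nu'$. The difference is one of economy: the paper decomposes a single optimal joining $\bar\lambda$ and extracts one ergodic component $\lambda'$ with $\int d_0 \dif\lambda' \le \dbarm(\mu,\nu)$, whereas you prove the full disintegration identity $\dbarm(\mu,\nu)=\int \dbarm(\mu,\nu_\omega)\,dP(\omega)$, whose ``$\le$'' half requires a measurable selection of near-optimal joinings $\lambda_\omega \in J(\mu,\nu_\omega)$. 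Two remarks here. First, that ``$\le$'' half is never used in your final deduction: the ``$\ge$'' half alone yields $\int \dbarm(\mu,\nu_\omega)\,dP(\omega) \le \dbarm(\mu,\nu)$, hence a positive-$P$-measure set of components with $\dbarm(\mu,\nu_\omega)\le\dbarm(\mu,\nu)$, so you could delete it and land exactly on the paper's argument. Second, your proposed sidestep of the selection problem is flawed as stated: $\nu \mapsto \dbarm(\mu,\nu)$ is \emph{lower} semicontinuous with respect to the weak$^*$ topology (a weak$^*$ limit of near-optimal joinings of $\mu$ and $\nu_n$ is a joining of $\mu$ and $\nu$, and $d_0$ is continuous on $\FS\times\FS$), not upper semicontinuous --- weak$^*$-approximating a Bernoulli measure by periodic-orbit measures that stay $\dbarm$-far away shows upper semicontinuity genuinely fails --- so the $\eps$-loss argument does not go through as written. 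Since this flaw sits only in the superfluous half, your proof of the lemma stands; a minor advantage of your formulation is that it never needs the infimum defining $\dbarm(\mu,\nu)$ to be attained, whereas the paper's appeal to a distance-realising joining implicitly uses compactness of $J(\mu,\nu)$ together with continuity of $d_0$.
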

\begin{proof}It is enough to show that for every $\mu\in\Mse(\FS)$ and $\nu \in \Ms(Y)$ there exists $\nu' \in \Mse(Y)$ such that $\dbarm(\mu,\nu')\leq \dbarm(\mu,\nu)$.
Let $\bar\lambda$ be a joining which realises the $\dbarm$ distance between $\mu\in\Mse(\FS)$ and $\nu\in\Ms(Y)$, i.e.,
\[\dbarm(\mu,\nu) = \int_{\FS\times Y} d_0(x,y) \dif {\bar\lambda(x,y)}.\]
Let $\bar\xi$ be the ergodic decomposition  of $\bar\lambda$.
We have
\[
\int_{\FS\times Y}d_0(x,y) \dif\bar\lambda(x,y)=\int_{\Msse(\FS\times Y)}\left(\int_{\FS\times Y} d_0(x,y) \dif\lambda(x,y)\right) \dif\bar\xi(\lambda).
\]
Let $E$ be the set of all $\lambda\in\Msse(\FS\times Y)$ such that
\[
 \int_{\FS\times Y}d_0(x,y) \dif \lambda(x,y)\leq \dbarm(\mu,\nu).
\]
It is clear that $E$ satisfies $\bar\xi(E)>0$. Additionally, since $\mu$ is ergodic, for $\bar\xi$-a.e.{} $\lambda\in\Msse(\FS\times Y)$ the push forward of $\lambda$ through the projection onto the first coordinate is $\mu$.
Hence, there exists $\lambda'\in\Msse(\FS\times Y)$ such that $(\pi_{\FS})_*(\lambda')=\mu$ and
\[
\int_{\FS\times Y}d_0(x,y) \dif \lambda'(x,y)\leq \dbarm(\mu,\nu).
\]
Let $\nu'=(\pi_Y)_*(\lambda')$. Then $\nu'$ is an ergodic measure supported on $Y$ and $\lambda'$ is a joining of $\mu$ and $\nu'$.
Consequently, $\dbarm(\mu,\nu')\leq \dbarm(\mu,\nu)$.
 \end{proof}

% \begin{cor}
%If $X$ and $Y$ are shift spaces over $\alf$, then
%\[\dbar(\Mse(X),\Mse(Y))\leq \dbar(\Ms(X),\Ms(Y)).\]
%\end{cor}

\begin{lem}
\label{lem:dm-leq-de}
\label{lem:de-leq-dm}
\label{lem:dm-eq-de}
If $X$ and $Y$ are shift spaces over $\alf$, then
\[\Hdbarm(\Ms(X),\Ms(Y)) = \Hdbarm(\Mse(X),\Mse(Y)).\]
%Furthermore, both quantities are equal
%\[
%\max\left(
% \sup_{\mu\in\Mse(X)}\dbarm(\mu,\Ms(Y)),\ \sup_{\nu\in\Mse(Y)}\dbarm(\nu,\Ms(X)) \right).
%\]
\end{lem}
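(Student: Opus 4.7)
The plan is to prove two matching inequalities. One direction is immediate from the inclusion $\Mse(\cdot) \subseteq \Ms(\cdot)$ combined with Lemma \ref{lem:pseudogeneric-dbar}: for each $\mu \in \Mse(X)$ Lemma \ref{lem:pseudogeneric-dbar} gives $\dbarm(\mu, \Mse(Y)) = \dbarm(\mu, \Ms(Y)) \le \Hdbarm(\Ms(X), \Ms(Y))$. Taking the supremum over $\mu \in \Mse(X)$ and swapping the roles of $X$ and $Y$ gives $\Hdbarm(\Mse(X), \Mse(Y)) \le \Hdbarm(\Ms(X), \Ms(Y))$.

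For the reverse inequality, set $R := \Hdbarm(\Mse(X), \Mse(Y))$. By the symmetry of the Hausdorff construction it suffices to prove $\dbarm(\mu, \Ms(Y)) \le R$ for every $\mu \in \Ms(X)$. Fix such a $\mu$ and apply the ergodic decomposition $\mu = \int_{\Mse(X)} \mu' \, d\xi(\mu')$. For each ergodic $\mu'$, Lemma \ref{lem:pseudogeneric-dbar} yields $\dbarm(\mu', \Ms(Y)) = \dbarm(\mu', \Mse(Y)) \le R$. The strategy is to integrate a measurably chosen family of near-optimal joinings against $\xi$ in order to produce a single joining of $\mu$ with an element of $\Ms(Y)$ whose cost is at most $R$.

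More concretely, fix $\varepsilon > 0$ and consider the set-valued map on $\Mse(X)$ defined by
\[
F(\mu') := \bigl\{ \lambda \in \Mss(\FS\times\FS) \,:\, (\pi_1)_*\lambda = \mu',\ (\pi_2)_*\lambda \in \Ms(Y),\ \textstyle\int d_0\,d\lambda < R + \varepsilon \bigr\}.
\]
Each defining condition cuts out a Borel subset of the Polish space $\Mse(X) \times \Mss(\FS\times\FS)$: the first is closed by weak$^*$ continuity of the marginal map, the second is closed because $\Ms(Y)$ is weak$^*$ closed in $\Ms(\FS)$, and the third is open because $d_0$ is a continuous bounded function on $\FS\times\FS$. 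The previous paragraph shows that every section $F(\mu')$ is nonempty, so a standard measurable selection theorem (e.g.\ Jankov--von Neumann) provides a $\xi$-measurable map $\mu' \mapsto \lambda_{\mu'} \in F(\mu')$.

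Setting $\lambda := \int_{\Mse(X)} \lambda_{\mu'} \, d\xi(\mu')$ then gives a $\sigma\times\sigma$-invariant probability measure on $\FS\times\FS$ with first marginal $\mu$ and second marginal $\nu := (\pi_2)_*\lambda \in \Ms(Y)$, satisfying $\int d_0\,d\lambda < R + \varepsilon$. Hence $\dbarm(\mu, \nu) < R + \varepsilon$, and letting $\varepsilon \to 0$ yields $\dbarm(\mu, \Ms(Y)) \le R$ as required. I expect the main obstacle to lie in the measurable selection step—specifically, in justifying Borel-measurability of the conditions defining $F(\mu')$ as properties of the pair $(\mu', \lambda)$—but this is routine given the weak$^*$ continuity of marginals and the fact that $d_0$ is continuous.
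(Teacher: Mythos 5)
Your proof is correct, and the easy inequality (via Lemma \ref{lem:pseudogeneric-dbar} and the inclusion $\Mse\subseteq\Ms$) is exactly the paper's. For the harder inequality, however, you take a genuinely different route. The paper avoids both the ergodic decomposition and any measurable selection: it uses the weak$^*$ density of \emph{finite} convex combinations of ergodic measures in $\Ms(X)$, writes $\mu$ as a weak$^*$ limit of such combinations $\sum_{j\in I(n)}\alpha^{(n)}_j\mu^{(n)}_j$, picks for each of the finitely many $\mu^{(n)}_j$ a near-optimal joining with an ergodic measure on $Y$ (a finite choice, so no selection theorem is needed), forms the convex combination $\lambda^{(n)}$ of these joinings, and extracts a weak$^*$ limit $\bar\lambda$ by compactness of $\Mss(X\times Y)$; since $d_0$ is continuous (it is locally constant on $\FS\times\FS$), the cost functional $\lambda\mapsto\int d_0\,\mathrm{d}\lambda$ and the marginal maps are weak$^*$ continuous, which controls the cost of $\bar\lambda$ and identifies its first marginal as $\mu$. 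What each approach buys: the paper's argument is soft and self-contained, trading your descriptive-set-theoretic input (Borel structure on $\Mse(X)$, Jankov--von Neumann, integration of a measure-valued selection) for elementary compactness; yours produces the joining in one shot from the ergodic decomposition and makes the bound $R+\varepsilon$ completely transparent, at the price of the selection machinery --- which you correctly identify as the only delicate point, and your verification of it (closedness of the two marginal constraints, openness of the cost constraint, hence a Borel, in particular analytic, graph) is sound. A minor remark: if you replace the strict inequality in the definition of $F(\mu')$ by $\int d_0\,\mathrm{d}\lambda\le R+\varepsilon$, the sections become compact and the Kuratowski--Ryll-Nardzewski theorem applies directly; note also that your $\varepsilon$-slack quietly repairs a small imprecision in the paper, which tacitly assumes the infimum over $\Mse(Y)$ is attained.
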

\begin{proof}
It follows from Lemma \ref{lem:pseudogeneric-dbar} that
\begin{multline}\label{ineq:long}
 \Hdbar(\Mse(X),\Mse(Y))=\\
 \max\left(
 \sup_{\mu\in\Mse(X)}\dbarm(\mu,\Ms(Y)),\ \sup_{\nu\in\Mse(Y)}\dbarm(\nu,\Ms(X)) \right).
\end{multline}
Since $\Mse(X)\subset\Ms(X)$ we have
\[
\sup_{\mu\in\Mse(X)}\dbarm(\mu,\Ms(Y))\le \sup_{\mu\in\Ms(X)}\dbarm(\mu,\Ms(Y)),
\]
and the same inequality with the roles of $X$ and $Y$ reversed.
These inequalities, together with \eqref{ineq:long} give us
\begin{multline*}
                                                      \Hdbar(\Mse(X),\Mse(Y))= \\
                                                      \max\left(
 \sup_{\mu\in\Mse(X)}\dbarm(\mu,\Ms(Y)),\ \sup_{\nu\in\Mse(Y)}\dbarm(\nu,\Ms(X)) \right) \\
 \leq \Hdbarm(\Ms(X),\Ms(Y)).
                                                    \end{multline*}
It remains to prove that for every %$\Delta>\dbarm(\Mse(X),\Mse(Y))$ and
$\mu\in\Ms(X)$, there exists $\nu\in\Ms(Y)$ such that $\dbarm(\mu,\nu)\leq \Hdbarm(\Mse(X),\Mse(Y))$.
This is sufficient, since the roles of $X$ and $Y$ are interchangeable meaning that
\[
\Hdbarm(\Ms(X),\Ms(Y))\le
\Hdbar(\Mse(X),\Mse(Y)).
\]
Recall that finite convex combinations of ergodic measures are weak$^*$ dense in $\Ms(X)$.
Therefore we have
\[
\mu=\lim_{n\to\infty}\sum_{j\in I(n)}\alpha\ijn\mu\ijn,
\]
where $I(n)$ is a finite set for every $n\in\N$, furthermore, for a fixed $n$ and for each $j\in I(n)$ we have $\mu\ijn\in\Mse(X)$, $\alpha\ijn>0$, and $\sum_{j\in I(n)}\alpha\ijn=1$.
By our assumption, to every $\mu\ijn$ corresponds an ergodic measure $\nu\ijn\in\Mse(Y)$ and a joining $\lambda\ijn\in J(\mu\ijn,\nu\ijn)$ with
\begin{equation}\label{ineq:dbar-bound}
\dbarm(\mu\ijn,\nu\ijn)=\int_{X\times Y}d_0(x,y)\dif\lambda\ijn(x,y)\le \dbarm(\Mse(X),\Mse(Y)).
\end{equation}
For each $n\in\N$ define
\[
\lambda\uin=\sum_{j\in I(n)}\alpha\ijn\lambda\ijn.
\]
Using compactness of $\Mss(X\times Y)$ we may assume that the sequence $(\lambda\uin)_{n\in\N}$ weak$^*$ converges to some $\bar\lambda\in\Mss(X\times Y)$.
By the definition of weak$^*$ topology, it follows immediately that
\begin{equation}\label{eq:int-lim}
\int_{X\times Y} d_0(x,y)\dif\bar\lambda(x,y)=\lim_{n\to\infty}\int_{X\times Y} d_0(x,y)\dif\lambda\uin(x,y).
\end{equation}
It is easy to see that %$(\pi_X)_*(\bar\lambda)=\mu$. Hence
$\bar\lambda$ is a joining of $\mu$ with $\nu=(\pi_Y)_*(\bar\lambda)\in\Ms(Y)$.
Definition of $\lambda\uin$ yields that for every $n\in\N$ we have
\begin{equation}\label{eq:int-lin}
\int_{X\times Y} d_0(x,y)\dif\lambda\uin(x,y)=\sum_{j\in I(n)}\alpha\ijn\int_{X\times Y} d_0(x,y) \lambda\ijn(x,y).
\end{equation}
Combining \eqref{ineq:dbar-bound}, \eqref{eq:int-lim}, and \eqref{eq:int-lin} we obtain
\[
\int_{X\times Y} d_0(x,y)\dif\bar\lambda(x,y)\le \Hdbarm(\Mse(X),\Mse(Y)).
\]
Hence, for each $\mu\in\Ms(X)$ we have $\nu\in\Ms(Y)$ such that $\dbarm(\mu,\nu)\leq \Hdbarm(\Mse(X),\Mse(Y))$.
\end{proof}

Now, having Lemma \ref{lem:dm-eq-de} at our disposal, we proceed with the proof of Theorem \ref{thm:gen-scheme}.

  \begin{proof}[Proof of Theorem \ref{thm:gen-scheme}]
   Fix $\eps>0$ and $\nu \in \Ms(X)$. Let $D$ be any metric compatible with the weak$^*$ topology on $\Ms(\FS)$. Since the entropy function $\mu\mapsto h(\mu)$ is uniformly continuous on $\Ms(\FS)$ endowed with the metric $\dbarm$ \cite{Rudolph}, we can find $\delta_1>0$ such that $|h(\mu)-h(\mu')|<\eps/3$ for any $\mu,\mu'\in\Ms(\FS)$ with  $\dbar(\mu,\mu')\leq\delta_1$.
By \cite[Theorem 7.7]{Rudolph}, there exists $\delta_2 > 0$ such that $\D(\mu,\mu')<\eps/3$
for any $\mu,\mu'\in\Ms(\FS)$ with  $\dbarm(\mu,\mu')\leq\delta_2$.
Put $\delta = \min(\delta_1,\delta_2)$ and fix $k\in\N$ such that (cf.\ Lemma \ref{lem:dm-eq-de})
\begin{equation}\label{eq:dM=dMe2}
 \Hdbarm(\Ms(X_k),\Ms(X)) = \Hdbarm(\Mse(X_k),\Mse(X)) < \delta.
 \end{equation}

By  \eqref{eq:dM=dMe2}, there exists $\mu\in\Ms(X_k)$ such that $\dbarm(\nu,\mu)< \delta$, whence
\begin{equation}\label{ineq:delta1}
D(\nu,\mu)<\eps/3\qquad\text{and}\qquad|h(\nu)-h(\mu)|<\eps/3.
\end{equation}
Since $\Mse(X_k)$ is entropy-dense in $\Ms(X_k)$, there exists $\mu'\in\Mse(X_k)$ that satisfies
\begin{equation}\label{ineq:delta2}
D(\mu,\mu')<\eps/3\qquad\text{and}\qquad |h(\mu)-h(\mu')|<\eps/3.
\end{equation}
By another application of \eqref{eq:dM=dMe2}, there exists $\nu'\in \Mse(X)$ such that $\dbarm(\nu',\mu')< \delta$, so
\begin{equation}\label{ineq:delta3}
D(\nu',\mu')<\eps/3\qquad\text{and}\qquad |h(\nu')-h(\mu')|<\eps/3.
\end{equation}
Combining \eqref{ineq:delta1}, \eqref{ineq:delta2}, and \eqref{ineq:delta3} we conclude that
\[D(\nu,\nu')<\eps\qquad\text{and}\qquad |h(\nu)-h(\nu')|<\eps.\qedhere\]
\end{proof}

Note that Theorem \ref{thm:gen-scheme}  only transfers entropy density from sequences of simplices to their $\Hdbarm$-limits. It turns out that very useful bounds for $\Hdbarm$ are provided by $\Hdund$ and $\Hdbar$. These bounds yield immediately the implications \eqref{four-modes-4}$\implies$\eqref{four-modes-3}$\implies$\eqref{four-modes-1}. This allows us to work directly with shift spaces bypassing the need of determining their simplices of invariant measures.

\begin{prop}\label{prop:dbar-le-dund}
If $X$ and $Y$ are shift spaces over $\alf$ then
\[\Hdbarm(\Mse(X),\Mse(Y))\leq \Hdund(X,Y)\le \Hdbar(X,Y).\]
\end{prop}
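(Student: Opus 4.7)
The right-hand inequality is essentially immediate: for any $x,y\in\FS$ one has $\dund(x,y)\le\dbar(x,y)$ from the definition (liminf versus limsup), and taking infima and suprema preserves this inequality when passing to the Hausdorff versions. So $\Hdund(X,Y)\le\Hdbar(X,Y)$ drops out without further work.

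For the main inequality $\Hdbarm(\Mse(X),\Mse(Y))\le\Hdund(X,Y)$, the plan is to build a joining explicitly from a generic point. Fix an ergodic $\mu\in\Mse(X)$, a generic point $x\in X$ for $\mu$, and $\eps>0$. Use the definition of $\Hdund$ to pick $y\in Y$ with $\dund(x,y)<\Hdund(X,Y)+\eps$. Then select a subsequence $(n_k)$ along which the liminf defining $\dund(x,y)$ is attained, i.e.,
\[
\frac{1}{n_k}\bigl|\{0\le j<n_k:x_j\neq y_j\}\bigr|\longrightarrow\dund(x,y).
\]
Consider the empirical measures $\lambda_{n_k}=\tfrac{1}{n_k}\sum_{j=0}^{n_k-1}\delta_{(\sigma^j x,\sigma^j y)}$ on $X\times Y$, and pass to a further weak$^*$-convergent subsequence with limit $\lambda$. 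Standard arguments give that $\lambda$ is $\sigma\times\sigma$-invariant with first marginal $\mu$ (because $x$ is generic for $\mu$) and second marginal some $\nu\in\Ms(Y)$; hence $\lambda\in J(\mu,\nu)$.

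Since $d_0(x,y)=\charst_{\{x_0\neq y_0\}}$ depends only on the zeroth coordinates, the integral $\int d_0\,d\lambda$ equals the limit of $\int d_0\,d\lambda_{n_k}$, which is precisely $\dund(x,y)$. Therefore
\[
\dbarm(\mu,\nu)\le\int_{\FS\times\FS} d_0\,d\lambda=\dund(x,y)<\Hdund(X,Y)+\eps.
\]
The final step handles the fact that $\nu$ need not be ergodic: invoking Lemma~\ref{lem:pseudogeneric-dbar} (applied with the ergodic measure $\mu$ and the arbitrary invariant measure $\nu$ on $Y$), we find $\nu'\in\Mse(Y)$ with $\dbarm(\mu,\nu')\le\dbarm(\mu,\nu)<\Hdund(X,Y)+\eps$. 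Letting $\eps\to 0$ yields $\dbarm(\mu,\Mse(Y))\le\Hdund(X,Y)$ for each $\mu\in\Mse(X)$; by the symmetric argument the same bound holds with $X$ and $Y$ swapped, and taking suprema concludes the proof.

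The only real subtlety is ensuring the limiting joining has $\mu$ as a marginal (which is why one must start from a generic point rather than an arbitrary $x\in X$) and then promoting the candidate measure on $Y$ to an ergodic one, which is exactly the content of Lemma~\ref{lem:pseudogeneric-dbar}. Both issues are already handled by tools established earlier, so the proof is essentially an assembly of these ingredients.
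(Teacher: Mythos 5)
Your proof is correct and follows essentially the same route as the paper's: choose a generic point for the ergodic measure $\mu$, extract a weak$^*$ limit of empirical measures along a subsequence realising the $\dund$-liminf to obtain a joining whose $d_0$-integral equals $\dund(\bar x,\bar y)$, and then pass from the invariant marginal $\nu$ to an ergodic measure via Lemma~\ref{lem:pseudogeneric-dbar}. All the points you flag as subtleties (genericity guaranteeing the first marginal, continuity of $d_0$ as a cylinder function, and the ergodic promotion) are handled exactly as in the paper's argument.
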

\begin{proof} The second inequality is obvious. We turn to the proof of the first one.
Let $\mu \in \Mse(X)$ and fix $\Delta>\Hdund(X,Y)$. Pick a $\mu$-generic point $\bar x$. By our assumptions, there is a point $\bar y\in Y$ such that $\dund(\bar x,\bar y)<\Delta$. Let $(n_k)_{k\ge 1}$ be a strictly increasing sequence of integers such that
\[
\dund(\bar x,\bar y)=\lim_{k\to\infty}\frac{1}{n_k}|\{0\le j<n_k:\bar x_j\neq\bar y_j\}|<\Delta.
\]
Passing to a subsequence if necessary, we assume that
the point $(\bar x,\bar y)\in X\times Y$ generates a measure $\bar\lambda\in\Mss(X\times Y)$, that is, the sequence of measures
\[
\bar\lambda_k=
\frac1{n_k}\sum^{n_k-1}_{j=0}\dirac_{(\sigma\times\sigma)^j(\bar x,\bar y)}\]
weak$^*$ converges to $\bar\lambda$ as $k \to \infty$.
Let $\nu=(\pi_Y)_*(\bar\lambda)\in\Ms(Y)$. Then $\bar\lambda$ is a joining of $\mu$ and $\nu$. It follows from  Lemma \ref{lem:dm-eq-de} that
\begin{align*}
\dbarm(\mu,\Mse(Y))&=\dbarm(\mu,\Ms(Y))\leq\dbarm(\mu,\nu)\leq \int_{X\times Y} d_0 (x,y)\,
                    \dif \bar\lambda(x,y)\\
  &= \lim_{k\to\infty}\int_{X\times Y} d_0(x,y)\,
\dif \bar\lambda_k(x,y) = \dund(\bar x,\bar y)<\Delta.
\end{align*}
Since $\Delta>\dund(X,Y)$ and $\mu \in \Mse(X)$ were arbitrary we have
\[
\sup_{\mu\in\Mse(X)}\dbarm(\mu,\Mse(Y))\le \Hdund(X,Y).
\]
%Using Lemma \ref{lem:pseudogeneric-dbar} we obtain
%\[
%\sup_{\mu\in\Ms(X)}\dbarm(\mu,\Ms(Y))\le \dund(X,Y).
%\]
Noting that the roles of $X$ and $Y$ are interchangeable, we see that
\begin{multline*}
  \Hdbarm(\Mse(X),\Mse(Y))=\\
\max\left(
 \sup_{\mu\in\Mse(X)}\dbarm(\mu,\Mse(Y)),\ \sup_{\nu\in\Mse(Y)}\dbarm(\nu,\Mse(X)) \right)
\le  \Hdund(X,Y). \qedhere
\end{multline*}
%We finish the proof applying Lemma \ref{lem:dm-eq-de}.
\end{proof}

In practice, we will not apply Theorem \ref{thm:gen-scheme} directly, instead we will invoke one of the corollaries obtained from Theorem \ref{thm:gen-scheme} and Proposition \ref{prop:dbar-le-dund}. The first one is immediate.
\begin{cor}\label{cor:gen-scheme}
Let $(X_n)_{n=1}^\infty$ be a sequence of shift spaces over $\alf$ with entropy-density such that for a shift space $X\subseteq\FS$ we have
\[
\lim_{n\to\infty}\Hdund(X_n,X)=0.
\]
Then $\Hdbarm(\Mse(X_n),\Mse(X)) \to 0$ as $n \to \infty$ and the ergodic measures are entropy-dense for $X$.
\end{cor}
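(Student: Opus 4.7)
The plan is to chain together the three results that have just been established. First, I would apply Proposition \ref{prop:dbar-le-dund} coordinate-wise in $n$: since
\[
\Hdbarm(\Mse(X_n),\Mse(X)) \le \Hdund(X_n,X)
\]
and the right-hand side tends to $0$ by hypothesis, the squeeze gives $\Hdbarm(\Mse(X_n),\Mse(X)) \to 0$ as $n\to\infty$. This already establishes the first half of the conclusion.

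Next, to obtain entropy-density for $X$ via Theorem \ref{thm:gen-scheme}, I need convergence of the full simplices $\Ms(X_n)$ to $\Ms(X)$ in $\Hdbarm$, not just of the ergodic parts. This is exactly where Lemma \ref{lem:dm-eq-de} enters: applying it to the pair $(X_n,X)$ for each $n$ gives
\[
\Hdbarm(\Ms(X_n),\Ms(X)) = \Hdbarm(\Mse(X_n),\Mse(X)),
\]
so the already-established convergence of the ergodic parts upgrades for free to convergence of the full simplices.

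Finally, with $\Hdbarm(\Ms(X_n),\Ms(X))\to 0$ in hand and with the standing hypothesis that each $X_n$ has entropy-dense ergodic measures, Theorem \ref{thm:gen-scheme} applies verbatim and yields entropy-density of $\Mse(X)$ in $\Ms(X)$. There is no real obstacle here: the content of the corollary is simply the observation that the pointwise $\dund$-pseudometric on sequences controls the measure-theoretic $\dbarm$-distance between ergodic simplices, which (by Lemma \ref{lem:dm-eq-de}) controls the $\dbarm$-distance between the full simplices, at which point Theorem \ref{thm:gen-scheme} transfers entropy density from the approximants to the limit.
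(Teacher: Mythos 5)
Your proof is correct and takes essentially the same route as the paper, which declares the corollary ``immediate'' from Theorem \ref{thm:gen-scheme} and Proposition \ref{prop:dbar-le-dund}; the passage from ergodic simplices to full simplices via Lemma \ref{lem:dm-eq-de}, which you make explicit, is exactly the intended bridge between the two.
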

Before stating the second practical corollary to Theorem \ref{thm:gen-scheme}, note that if $(X_n)_{n=1}^\infty$ is a sequence of shift spaces satisfying the Cauchy condition with respect to $\Hdbar$ or $\Hdund$, then using Proposition \ref{prop:dbar-le-dund} we get that $\Ms(X_n)_{n=1}^\infty$ satisfies the Cauchy condition for $\Hdbarm$, so the latter sequence converges in $\Hdbarm$ to some nonempty compact subset of $\Ms(\FS)$. Yet, it is not easy to identify the limit with $\Ms(X)$ for some shift space $X$, unless $X_1\supseteq X_2\supseteq\ldots$. Under this additional assumption the following result is immediately follows from Lemma \ref{gen-scheme-Cauchy} and Corollary \ref{cor:gen-scheme}.
\begin{cor}\label{gen-scheme-summ}
Let $(X_n)_{n=1}^\infty$ be a decreasing sequence of shift spaces over $\alf$ such that
\[
\sum_{n=1}^\infty\Hdund(X_n,X_{n+1})<\infty,
\]
Then $X=\bigcap_{n=1}^\infty X_n$ is a shift space such that $\Hdbarm(\Mse(X_n),\Mse(X)) \to 0$ as $n \to \infty$. Furthermore, if ergodic measures are entropy dense for $X_n$ and each $n\ge 1$, then entropy-density holds also for $X$.
\end{cor}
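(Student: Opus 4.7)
The plan is to reduce the statement to Lemma \ref{gen-scheme-Cauchy} and Theorem \ref{thm:gen-scheme} by verifying the $\Hdbarm$-Cauchy condition for $(\Ms(X_n))_{n=1}^\infty$ from the given summability hypothesis on $\Hdund$. First I would note that $X=\bigcap_{n=1}^\infty X_n$ is a nonempty shift space: the sets $X_n$ form a decreasing sequence of nonempty closed shift-invariant subsets of the compact space $\FS$, so the intersection is nonempty by the finite intersection property, and is clearly closed and shift-invariant.

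Next, I would establish the chain of inequalities linking $\Hdund$ with $\Hdbarm$ on the level of full simplices of invariant measures. Combining Proposition \ref{prop:dbar-le-dund} with Lemma \ref{lem:dm-eq-de}, for any shift spaces $Y,Z \subseteq \FS$ one obtains
\[
\Hdbarm(\Ms(Y),\Ms(Z)) \;=\; \Hdbarm(\Mse(Y),\Mse(Z)) \;\leq\; \Hdund(Y,Z).
\]
Applying this with $Y=X_k$ and $Z=X_{k+1}$ and using the triangle inequality for the pseudometric $\Hdbarm$, I get for all $m<n$
\[
\Hdbarm(\Ms(X_m),\Ms(X_n)) \;\leq\; \sum_{k=m}^{n-1} \Hdund(X_k,X_{k+1}),
\]
and the assumption $\sum_{k=1}^\infty \Hdund(X_k,X_{k+1})<\infty$ guarantees that the tails of this series tend to $0$. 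Hence $(\Ms(X_n))_{n=1}^\infty$ is $\Hdbarm$-Cauchy.

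Now I would invoke Lemma \ref{gen-scheme-Cauchy} to conclude that $\Hdbarm(\Ms(X_n),\Ms(X))\to 0$ as $n\to\infty$. A second application of Lemma \ref{lem:dm-eq-de} converts this into $\Hdbarm(\Mse(X_n),\Mse(X))\to 0$, which is the first conclusion of the corollary. Finally, under the additional hypothesis that ergodic measures are entropy-dense in $\Ms(X_n)$ for each $n$, Theorem \ref{thm:gen-scheme} applies directly and yields entropy-density for $\Ms(X)$.

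There is no genuine obstacle in the argument: all the serious work has been done in the preceding lemmas. The only subtlety worth flagging is the necessity of Lemma \ref{lem:dm-eq-de}, which allows us to pass back and forth between the estimate on $\Mse$ provided by Proposition \ref{prop:dbar-le-dund} and the Cauchy hypothesis on $\Ms$ required by Lemma \ref{gen-scheme-Cauchy}; without this equality we could not identify the $\Hdbarm$-limit with $\Ms(X)$.
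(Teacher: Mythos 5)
Your proposal is correct and follows essentially the same route as the paper, which derives the corollary from Lemma \ref{gen-scheme-Cauchy} together with Proposition \ref{prop:dbar-le-dund}, Lemma \ref{lem:dm-eq-de}, and Theorem \ref{thm:gen-scheme} (via Corollary \ref{cor:gen-scheme}). Your explicit step of transferring the summable $\Hdund$-increments into an $\Hdbarm$-Cauchy estimate before summing is exactly the right way to handle the fact that $\Hdund$ is only a premetric, and matches the paper's intended argument.
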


We also find the following observation useful. We stress that neither $\dund$ nor $\Hdund$ obey the triangle inequality, so the assumptions of Corollary \ref{cor:gen-scheme-def-Ms} do not guarantee that $\Hdund(X,Y) = 0$.

\begin{cor}\label{cor:gen-scheme-def-Ms}
  Suppose that $(X_k)_{k=1}^\infty$ and $X,Y$ are shift spaces over $\alf$ such that $\Hdund(X_k,X)$ and $\Hdund(X_k,Y)$ tend to zero as $k \to \infty$. Then
  \[\Ms(X)=\Ms(Y).%=\Lim_{k\to\infty}\Ms(X_k).
  \]
\end{cor}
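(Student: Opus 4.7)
The plan is to chain together the bounds already available in Proposition \ref{prop:dbar-le-dund} and Lemma \ref{lem:dm-eq-de}, and then exploit the fact that $\Hdbarm$ is a genuine metric (not merely a pseudometric) when restricted to $\dbarm$-closed sets.

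First, I would apply Proposition \ref{prop:dbar-le-dund} to each of the two sequences to obtain
\[
\Hdbarm(\Mse(X_k),\Mse(X))\le \Hdund(X_k,X)\longrightarrow 0,\qquad \Hdbarm(\Mse(X_k),\Mse(Y))\le \Hdund(X_k,Y)\longrightarrow 0.
\]
Next, Lemma \ref{lem:dm-eq-de} upgrades these statements from the ergodic simplices to the full simplices of invariant measures, yielding
\[
\Hdbarm(\Ms(X_k),\Ms(X))\longrightarrow 0\qquad\text{and}\qquad \Hdbarm(\Ms(X_k),\Ms(Y))\longrightarrow 0.
\]

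Then, since $\Hdbarm$ is a (pseudo)metric on nonempty subsets of $(\Ms(\FS),\dbarm)$, the triangle inequality gives
\[
\Hdbarm(\Ms(X),\Ms(Y))\le \Hdbarm(\Ms(X),\Ms(X_k))+\Hdbarm(\Ms(X_k),\Ms(Y))\longrightarrow 0,
\]
so $\Hdbarm(\Ms(X),\Ms(Y))=0$.

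The one subtlety — and what I expect to be the only real thing to verify — is passing from $\Hdbarm(\Ms(X),\Ms(Y))=0$ to the set-theoretic equality $\Ms(X)=\Ms(Y)$. As noted in Section \ref{sec:dbar}, the Hausdorff pseudometric induced by a metric becomes a metric on the family of closed nonempty subsets. Thus it suffices to check that $\Ms(X)$ and $\Ms(Y)$ are $\dbarm$-closed in $\Ms(\FS)$. But $\dbarm$-convergence implies weak$^*$ convergence, and both $\Ms(X)$ and $\Ms(Y)$ are weak$^*$-closed (in fact weak$^*$-compact) subsets of $\Ms(\FS)$; hence they are a fortiori $\dbarm$-closed. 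Consequently $\Hdbarm$ separates $\Ms(X)$ from $\Ms(Y)$, and $\Hdbarm(\Ms(X),\Ms(Y))=0$ forces $\Ms(X)=\Ms(Y)$, completing the proof.
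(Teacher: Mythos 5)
Your proof is correct and takes essentially the same route as the paper, whose entire proof reads ``It is a straightforward consequence of Proposition \ref{prop:dbar-le-dund}''; your chain --- Proposition \ref{prop:dbar-le-dund}, Lemma \ref{lem:dm-eq-de}, the triangle inequality for the pseudometric $\Hdbarm$, and the passage from $\Hdbarm(\Ms(X),\Ms(Y))=0$ to set equality --- is exactly the suppressed detail. Your verification of the last step (that $\Ms(X)$ and $\Ms(Y)$ are $\dbarm$-closed because $\dbarm$-convergence implies weak$^*$ convergence and both simplices are weak$^*$-compact, so $\Hdbarm$ is a genuine metric on them) is the one point the paper glosses over, and you handle it correctly.
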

\begin{proof}
It is a straightforward consequence of Proposition \ref{prop:dbar-le-dund}. %and Proposition \ref{prop:Lim-properties}\eqref{prop:Lim-properties-ii}.
\end{proof}

In order to apply Theorem \ref{thm:gen-scheme} or one of its Corollaries we still need to single out a class of shift spaces for which entropy-density is easily verifiable and then describe a family of
shift spaces $X$ such that for some sequence $(X_k)_{k=1}^\infty$ of shift spaces in the aforementioned class of shift spaces we have
\[\Hdbarm(\Mse(X_k),\Mse(X)) \to 0 \quad\text{ as } k \to \infty.\]

A class of shifts where entropy-density is easy to demonstrate is the family of transitive sofic shifts. The result is known even in a greater generality, see \cite{EKW, PS}. The proofs given in \cite{EKW,PS} simplify in the special case needed here (cf. proof of Theorem 7.12 in \cite{Rudolph}).
\begin{prop}\label{prop:sofic-ent-density}
Every transitive sofic shift space over a finite alphabet has entropy-dense set of ergodic measures.
\end{prop}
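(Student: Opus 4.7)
The plan is the classical orbit-gluing construction, exploiting the strongly connected labelled graph that presents $X$. I first reduce to approximating finite convex combinations of ergodic measures: such combinations $\tilde\mu = \sum_{i=1}^k \alpha_i \nu_i$ with $\nu_i \in \Mse(X)$ are weak*-dense in $\Ms(X)$ (via Choquet's representation of the ergodic decomposition), and since entropy is affine on $\Ms(X)$ and upper semicontinuous on the expansive space $\FS$, one can simultaneously arrange $|h(\tilde\mu) - h(\mu)| < \eps$. It therefore suffices to construct, given $\tilde\mu = \sum_{i=1}^k \alpha_i \nu_i$ and $\eps' > 0$, an ergodic $\mu' \in \Mse(X)$ that is weak*-close to $\tilde\mu$ with $h(\mu')$ within $\eps'$ of $\sum_i \alpha_i h(\nu_i)$.

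Fix a strongly connected labelled graph $G = (V, E, \tau)$ presenting $X$, which exists by transitivity. The Shannon-McMillan-Breiman theorem combined with the ergodic theorem yields, for each $i$ and all sufficiently large $n$, a collection $C_n^i \subseteq \lang_n(X)$ with $|C_n^i| \geq \exp(n(h(\nu_i) - \eps'))$ such that each $w \in C_n^i$ is $(n, \eps')$-typical for $\nu_i$, meaning its empirical cylinder distribution is weak*-close to $\nu_i$. Since $G$ is strongly connected, each $w \in C_n^i$ admits an extension by a word $v_w^i$ of length at most $|V|$ such that $wv_w^i$ labels a closed path on $G$. Let $\mu_n' \in \Ms(X)$ be the shift-invariant measure arising from the renewal process that concatenates independent loops, drawing the type $i \in \{1,\dots,k\}$ i.i.d.\ with law $(\alpha_i)$ and then the loop uniformly from $\{wv_w^i : w \in C_n^i\}$, stationarized by randomising the phase within the current loop. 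This measure factors onto a Bernoulli shift on the finite alphabet $\bigsqcup_i C_n^i$, so $\mu_n'$ is ergodic on $X$.

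It remains to verify the two approximation claims. Weak* closeness $\mu_n' \to \tilde\mu$ follows because each selected word is typical for its respective $\nu_i$ and the fillers $v_w^i$ contribute a proportion at most $|V|/(n+|V|)$ to long time averages. A direct counting argument gives the entropy bound
\[
  h(\mu_n') \;\geq\; \frac{1}{n+|V|}\left( H(\alpha_1,\ldots,\alpha_k) + \sum_i \alpha_i \log|C_n^i|\right) \;\geq\; \frac{n}{n+|V|}\sum_i \alpha_i\bigl(h(\nu_i) - \eps'\bigr),
\]
which tends to $\sum_i \alpha_i h(\nu_i) - \eps'$ as $n \to \infty$; the matching upper bound $h(\mu_n') \leq \sum_i \alpha_i h(\nu_i) + \eps'$ comes for free from upper semicontinuity of entropy on $\Ms(\FS)$ once weak* convergence is known. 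The main technical obstacle is simultaneously justifying ergodicity of $\mu_n'$ and sharpness of the entropy count; both are handled by recognising the coded system as essentially a Bernoulli shift on $\bigsqcup_i C_n^i$ under a bounded-to-one factor map onto $X$, so that ergodicity and the Kolmogorov--Sinai entropy of the factor both descend in the desired way.
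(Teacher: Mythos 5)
Your construction is, in outline, exactly the standard orbit-gluing argument that the paper itself does not write out but delegates to the literature (the paper's ``proof'' is a citation to the large-deviations papers of Eizenberg--Kifer--Weiss and Pfister--Sullivan, with a pointer to Theorem 7.12 in Rudolph's book), so you are reconstructing the intended proof rather than diverging from it. However, two steps in your write-up would fail as stated. First, the loops are not concatenable: you close each $w \in C_n^i$ into a loop $wv_w^i$ based at whatever vertex the chosen path for $w$ happens to start at, and closed paths based at \emph{different} vertices cannot be concatenated into a path of $G$. So the i.i.d.\ concatenation in your renewal process need not label any infinite path, i.e.\ $\mu_n'$ is not visibly supported on $X$. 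The repair is routine but must be said: fix one vertex $v_0$ and, using strong connectivity, choose for each $w$ words $u_w, v_w$ of length less than $|V|$ so that $u_w w v_w$ labels a closed path at $v_0$; all your estimates survive with $|V|$ replaced by $2|V|$.

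Second, and more seriously, the entropy lower bound rests on the assertion that the coding is ``a bounded-to-one factor map onto $X$,'' which is neither justified nor true in general (note also that the factoring direction in your text is reversed: $\mu_n'$ is a factor of the stationarized tower over the Bernoulli shift on the tagged alphabet $\{(i,w) : w \in C_n^i\}$, not the other way around). A point of $X$ does not determine its parsing into loops: the loop lengths vary because of the fillers, distinct junction-position sequences can yield identical label sequences, and in a sofic presentation one word may be read along several paths; the fibers can even be uncountable. Ergodicity is unaffected -- a Kakutani tower over an ergodic base is ergodic and ergodicity passes to any factor -- but entropy can drop under an infinite-to-one factor, so your displayed inequality $h(\mu_n') \geq \frac{1}{n+|V|}\bigl(H(\alpha_1,\ldots,\alpha_k)+\sum_i \alpha_i \log|C_n^i|\bigr)$ does not follow from Abramov's formula alone. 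You need one of the standard devices: (i) pass to a right-resolving (Fischer cover) presentation, so a label and an initial vertex determine the path, and prepend to every loop a fixed marker word forcing unique parsing, after which the coding really is bounded-to-one; (ii) estimate the fiber entropy directly -- the admissible junction sequences compatible with a given point have exponential growth rate $O(\log(n+|V|))/n$ per symbol, so $h(\mu_n')$ loses at most $O((\log n)/n)$, which your $\eps'$ slack absorbs; or (iii) bypass factors and lower-bound $h(\mu_n')$ by counting $(m,\delta)$-separated typical concatenations via Katok's entropy formula, which is what the cited proofs do. The remaining ingredients -- the reduction to finite convex combinations via the ergodic decomposition, the weak$^*$ estimate with filler proportion at most $2|V|/(n+2|V|)$, and the upper entropy bound from upper semicontinuity on the expansive space $\FS$ -- are correct as you state them.
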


In applications, we will use either Corollary \ref{cor:gen-scheme} or Corollary \ref{gen-scheme-summ} together with Proposition \ref{prop:sofic-ent-density}. For further reference we formulate our observation as a proposition.

\begin{cor}\label{cor:gen-scheme-sofic-dund}
Let  $(X_k)_{k=1}^\infty$ be a sequence of transitive sofic shifts over $\alf$. If $X$ is a shift space over $\alf$ such that $\Hdund(X_k,X) \to 0$ as $k\to\infty$ then ergodic measures are entropy-dense in $\Ms(X)$.
\end{cor}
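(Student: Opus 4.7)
The plan is to observe that this corollary is essentially a direct synthesis of two results already established in the paper: Proposition \ref{prop:sofic-ent-density}, which supplies entropy-density for each individual $X_k$, and Corollary \ref{cor:gen-scheme}, which transfers entropy-density across the approximation. So the real work has already been done; the argument is a two-line assembly.

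More concretely, I would first invoke Proposition \ref{prop:sofic-ent-density} to conclude that for every $k \in \N$ the transitive sofic shift $X_k$ has entropy-dense ergodic measures, i.e.\ $\Mse(X_k)$ is entropy-dense in $\Ms(X_k)$. This turns the sequence $(X_k)_{k=1}^\infty$ into a sequence of entropy-dense shift spaces, which is precisely the hypothesis needed to apply the corollary below.

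Next I would feed this into Corollary \ref{cor:gen-scheme}: having $(X_k)_{k=1}^\infty$ a sequence of entropy-dense shift spaces together with the assumption $\Hdund(X_k,X) \to 0$ as $k \to \infty$, the corollary yields $\Hdbarm(\Mse(X_k),\Mse(X)) \to 0$ and, more importantly, entropy-density of ergodic measures in $\Ms(X)$. This completes the argument.

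There is no real obstacle here; both ingredients are prepared and the chain of implications $\Hdund \Rightarrow \Hdbarm$-for-simplices (via Proposition \ref{prop:dbar-le-dund}) followed by the transfer of entropy-density (Theorem \ref{thm:gen-scheme}) has been assembled precisely so that this corollary falls out immediately. The only thing to be mindful of in the write-up is to cite Proposition \ref{prop:sofic-ent-density} before applying Corollary \ref{cor:gen-scheme}, so the reader sees why each $X_k$ qualifies as an entropy-dense shift space in the sense required by that corollary.
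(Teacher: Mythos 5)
Your proposal is correct and is exactly the paper's intended argument: the authors state the corollary as an immediate consequence of combining Proposition \ref{prop:sofic-ent-density} (entropy-density for transitive sofic shifts) with Corollary \ref{cor:gen-scheme} (transfer of entropy-density along $\Hdund$-convergence), which is precisely your two-step assembly.
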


As the first application of Corollary \ref{cor:gen-scheme-sofic-dund} we may now prove entropy-density of all $\dbar$-approachable and chain transitive shift-spaces.

\begin{prop}
If a shift space $X \subseteq \FS$ is $\dbar$-approachable and chain transitive, then its ergodic measures are entropy-dense.
\end{prop}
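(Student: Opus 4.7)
The plan is to apply Corollary \ref{cor:gen-scheme-sofic-dund} directly to the canonical Markov approximations $(X^M_n)_{n=1}^\infty$ of $X$. The two hypotheses in the statement are tailored precisely so that these Markov approximations satisfy the assumptions of that corollary.

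First, I would observe that since $X$ is chain transitive, the definition of chain transitivity (given just after the statement of $\dbar$-shadowing) says $X^M_n$ is transitive for all but finitely many $n$. Since each $X^M_n$ is a shift of finite type, it is in particular a sofic shift. Hence, discarding finitely many initial terms if necessary, $(X^M_n)_{n=N}^\infty$ is a sequence of transitive sofic shifts.

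Next, since $X$ is $\dbar$-approachable, by definition $\Hdbar(X^M_n, X) \to 0$ as $n \to \infty$. By Proposition \ref{prop:dbar-le-dund} we have $\Hdund(X^M_n, X) \le \Hdbar(X^M_n, X)$, so $\Hdund(X^M_n, X) \to 0$ as well. Applying Corollary \ref{cor:gen-scheme-sofic-dund} to the sequence $(X^M_n)_{n=N}^\infty$ of transitive sofic shifts converging to $X$ in $\Hdund$, we conclude that ergodic measures are entropy-dense in $\Ms(X)$.

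There is no real obstacle here: the proposition is essentially a direct combination of the definition of $\dbar$-approachability, the definition of chain transitivity, the comparison $\Hdund \le \Hdbar$ from Proposition \ref{prop:dbar-le-dund}, and Corollary \ref{cor:gen-scheme-sofic-dund} (which itself rests on entropy-density of transitive sofic shifts, Proposition \ref{prop:sofic-ent-density}). The conceptual work has already been done in the preceding sections; the present proposition is the corresponding harvest.
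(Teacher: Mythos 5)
Your proposal is correct and matches the paper's proof exactly: the paper's entire argument is ``Apply Corollary \ref{cor:gen-scheme-sofic-dund} with $X_k=X^M_k$,'' and your write-up simply makes explicit the same ingredients (chain transitivity giving transitive sofic Markov approximations, $\dbar$-approachability giving $\Hdbar$-convergence, and the inequality $\Hdund\le\Hdbar$ from Proposition \ref{prop:dbar-le-dund}). Nothing is missing; you have unpacked the one-line proof faithfully.
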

\begin{proof}
Apply Corollary \ref{cor:gen-scheme-sofic-dund} with $X_k=X^M_k$.
%Let $X$ be a $\dbar$-approachable shift space and $X^M_n$ for $n\in\N$ be its $n$-th Markov approximation. By assumption $X^M_n$ is a transitive sofic shift.
%Since $\Hdbarm(\Ms(X),\Ms(X^M_n))\le \Hdbar(X,X^M_n)$, we see that $\Hdbarm(\Ms(X),\Ms(X^M_n))\to 0$ as $n\to\infty$. Now, the conclusion follows from Corollary \ref{cor:gen-scheme-sofic-dund}.
\end{proof}

\section{$\dbar$-approachability and specification}\label{sec:spec}

We prove that shift spaces satisfying popular variants of the specification property (specification and almost specification) %and weak specification)
are $\dbar$-approachable.
Note that the definitions below are adapted to the symbolic setting. These formulations are equivalent, but slightly different than the ones
usually used for general compact dynamical systems. %and hence they are phrased
For more background on the specification property and its relatives we refer the reader to the survey paper \cite{KLO}.

A shift space $X\in \FS$ has the \emph{specification property} if there is $k\in\N$ such that for any words $u,w\in\lang(X)$ one can find a word $v$ of length $k$ such that $uvw\in\lang(X)$.
A shift space $X$ has the \emph{almost specification property} if there is a mistake function $g\colon\N\to\N$, which is a nondecreasing function with $g(n)/n\to 0$ as $n\to\infty$ and such that for any words $u,w\in\lang(X)$ there are words $u',w'$ satisfying $|u|=|u'|$, $|v|=|v'|$, $\dHam(u,u')\le g(|u|)$, and $\dHam(w,w')\le g(|w|)$, such that $u'w'\in\lang(X)$. Here and elsewhere, $\dHam$ stands for the normalised Hamming distance, that is
\[\dHam(u,w)=\frac1n|\{1\le j\le n: u_j\neq w_j\}|,
\]
where $n=|u|=|w|$.

A prototype for the almost specification property was the \emph{$g$-almost
product property} introduced in the context of general (not-necessarily symbolic) dynamical systems by Pfister and Sullivan in \cite{PS07}. Later, Thompson \cite{Thompson} proposed to slightly modify this notion and renamed it the \emph{almost specification property}. We follow Thompson, hence our almost specification property is
logically weaker (less restrictive) than the notion introduced by Pfister and Sullivan. The standard examples of shift spaces with the almost specification property are $\beta$-shifts (see \cite{PS}). For shift spaces it is easy to see that the specification property implies almost specification. For a generic $\beta>1$ the $\beta$-shift $X_\beta$ is an example of a shift space with almost specification but without specification (see \cite{Schmeling}).

Both specification properties defined above are known to imply entropy-density. For shifts with the specification property it was first proved in \cite{EKW}. The almost specification property implies entropy density because it implies the approximate product property, and the latter implies entropy-density by Theorem 2.1 of \cite{PS}.

It is also easy to see that each of the specification properties considered here implies the $\dbar$-shadowing property (actually, the proof of Lemma \ref{lem:spec-dbar-shadowing} applies almost verbatim).

\begin{lem}\label{lem:dbar-shadow-almost-spec} For a shift space $X\subseteq \FS$ specification implies almost specification.
If a shift space $X\subseteq \FS$ has the almost specification property, then it has also the $\dbar$-shadowing property.
\end{lem}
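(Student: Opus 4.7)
The plan for the first implication is to reuse the specification gluing word as the location of permitted modification. Given that $X$ has the specification property with constant $k$, I would take the mistake function $g\equiv k$, which trivially satisfies $g(n)/n\to 0$. For $u,w\in\lang(X)$ with $|u|\ge k$, let $\tilde u := u_1\ldots u_{|u|-k}$ denote the prefix of $u$ of length $|u|-k$; applying specification to $\tilde u$ and $w$ yields a word $v$ of length $k$ with $\tilde u v w\in\lang(X)$. Setting $u':=\tilde u v$ and $w':=w$ gives $u'w'\in\lang(X)$ and $|u'|=|u|$, while $u$ and $u'$ agree on their first $|u|-k$ positions, so $\dHam(u,u')\le k=g(|u|)$. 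The corner case $|u|<k$ is a technicality: the bound $\dHam(u,u')\le|u|<k=g(|u|)$ holds for any choice of $u'$ of length $|u|$, and the existence of such a $u'$ with $u'w\in\lang(X)$ follows from specification itself (apply specification to any word of length $|u|$ in $\lang(X)$ and $w$, and take an appropriate suffix).

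For the second implication, I would mimic the proof of Lemma \ref{lem:spec-dbar-shadowing}. Given $\eps>0$, choose $N$ with $g(N)/N<\eps$, so that $g(|w|)\le\eps|w|$ whenever $|w|\ge N$. For a sequence $(w^{(j)})_{j\ge 1}$ in $\lang(X)$ with $|w^{(j)}|\ge N$, set $L_0:=0$ and $L_n:=|w^{(1)}|+\ldots+|w^{(n)}|$ for $n\ge 1$, and consider
\[ A_n := \{\, x\in X : |\{L_{j-1}\le i<L_j : x_i\ne w^{(j)}_i\}|\le g(|w^{(j)}|)\text{ for every } j\le n\,\}. \]
Each $A_n$ is closed in $X$, the sequence is decreasing in $n$, and the $n$-fold form of almost specification applied to $w^{(1)},\ldots,w^{(n)}$ ensures $A_n\ne\emptyset$. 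Compactness of $X$ then furnishes $x'\in\bigcap_n A_n$, and by construction the $j$-th block of $x'$ differs from $w^{(j)}$ in at most $g(|w^{(j)}|)\le\eps|w^{(j)}|$ positions for every $j$. Writing $y:=w^{(1)}w^{(2)}\cdots$, this yields $|\{0\le i<L_n : x'_i\ne y_i\}|\le\eps L_n$ for every $n$, whence $\dbar(x',y)\le\eps$ as required.

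The principal subtlety lies in the appeal to the $n$-fold form of almost specification in Part 2. The two-word statement recorded in the paper does not close cleanly under naive induction: appending one block at a time via 2-fold almost specification may re-modify the entire already-glued prefix by up to $g(L_j)$ symbols, and these accumulated modifications can easily exceed $\eps L_n$. I would resolve this by invoking the standard $n$-fold formulation of almost specification from \cite{PS07,Thompson}, which is the form actually needed for $A_n\ne\emptyset$ above; alternatively, one can iterate the 2-fold property while enlarging $g$ to a mistake function $g'$ that still satisfies $g'(n)/n\to 0$ and absorbs the iteration losses.
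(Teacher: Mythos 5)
Your Part 1 is correct and is exactly the intended easy argument: take $g\equiv k$, trim the last $k$ symbols of $u$, and glue with the specification word; the corner case $|u|<k$ is handled correctly using the fact that $\lang(X)$ is closed under subwords. Your flag about the two-word versus $n$-fold formulation of almost specification is also apt: the paper's two-word statement does not iterate with a fixed mistake budget, and the $n$-fold form of \cite{PS07,Thompson} is what the authors' ``almost verbatim'' adaptation of Lemma \ref{lem:spec-dbar-shadowing} tacitly uses; your compactness argument via the nested closed sets $A_n$ is the right way to pass from finite concatenations to a point of $X$ (note only that nonemptiness of $A_n$ also uses $\sigma(X)\subseteq X$ to move a word of $\lang(X)$ to a prefix). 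Be warned, however, that your fallback option --- iterating the two-word property with an enlarged mistake function $g'$ --- does not obviously work: each regluing may deposit up to $g(L_j)$ fresh errors \emph{anywhere} in the length-$L_j$ prefix, and $\sum_{j\le n} g(L_j)$ need not be $O(\eps L_n)$ (take $g(m)=\lceil m/\log m\rceil$ and blocks of constant length), nor can the accumulated damage to an early block be charged to any fixed $g'(\ell_j)$. So the $n$-fold formulation is the thing to cite, not the iteration.

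There is, though, a genuine gap at your final step: from $|\{0\le i<L_n : x'_i\ne y_i\}|\le\eps L_n$ \emph{for every $n$} you cannot conclude $\dbar(x',y)\le\eps$, because $\dbar$ is a $\limsup$ over \emph{all} positions $m$, the block lengths $\ell_j$ are only bounded below, and almost specification gives no control over \emph{where} inside a block the $g(\ell_j)$ mismatches sit. Concretely, if $g$ is unbounded, choose $\ell_{n+1}$ so large that $g(\ell_{n+1})\ge L_n$ and suppose the errors of block $n+1$ are concentrated at its start; then at $m=L_n+g(\ell_{n+1})$ the error density is at least $g(\ell_{n+1})/(L_n+g(\ell_{n+1}))\ge 1/2$, even though all checkpoint densities at the $L_n$ are tiny. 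Your source of trouble is invisible in the proof of Lemma \ref{lem:spec-dbar-shadowing} only because there the $k$ modified symbols sit at the \emph{end} of each block, so the error count up to any position $m$ is at most $(m/N)k$ uniformly. The standard repair: before gluing, subdivide each $w^{(j)}$ into subwords of lengths in $[N,2N)$ (legitimate since $\lang(X)$ is closed under subwords and the concatenation $y$ is unchanged), and choose $N$ so that $g(2N)/N\le\eps/2$, which is possible as $g(n)/n\to 0$ and $g$ is nondecreasing. Then every block carries at most $g(2N)$ errors, at most $m/N+1$ blocks meet $[0,m)$, so the error count up to $m$ is at most $(m/N+1)\,g(2N)$ and $\dbar(x',y)\le g(2N)/N<\eps$ at \emph{every} scale, as required. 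With this normalization inserted before your $A_n$ construction, the proof is complete.
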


By Theorem \ref{thm:characterisation}, every shift space $X$ with the almost specification property such that $\sigma(X)=X$ is $\dbar$-approachable. Unfortunately, this the latter condition (i.e., surjectivity) is not necessarily satisifed by a shift space with almost specification. For example, the shift space $X=\{0^\infty,10^\infty\}\subseteq\{0,1\}^\infty$ has the almost specification property, but $\sigma(X)\neq X$. Therefore we need to assume $\sigma(X)=X$ in our next result.

\begin{prop}\label{prop:spec-imp-dbar-approach}
Let $X\subseteq\FS$ be a shift space with the almost specification property.
If $\sigma(X)=X$, then $X$ is $\dbar$-approachable and chain mixing.
%Furthermore, $X$ has entropy-dense set of ergodic measures.
\end{prop}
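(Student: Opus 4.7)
The plan is to observe that the proposition follows almost immediately by chaining together results already established in the paper, with almost specification providing the entry point into the equivalence given by Theorem \ref{thm:characterisation}.

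First, I would invoke Lemma \ref{lem:dbar-shadow-almost-spec} to convert the almost specification hypothesis into the $\dbar$-shadowing property for $X$. Combined with the standing assumption $\sigma(X)=X$, this is exactly condition \eqref{cond:ii} in the statement of Theorem \ref{thm:characterisation}. Then I would apply the implication \eqref{cond:ii}$\implies$\eqref{cond:iii} of Theorem \ref{thm:characterisation} to conclude that $X$ is chain mixing and $\dbar$-approachable, which is precisely the desired conclusion.

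Alternatively, one may avoid citing the theorem as a whole and apply its two constituent lemmas directly: Lemma \ref{lem:dbar-shadowing-approach} gives $\dbar$-approachability from $\dbar$-shadowing (with no surjectivity hypothesis needed), while Lemma \ref{lem:dbar-shadowing-chain-mix} gives chain mixing from $\dbar$-shadowing together with $\sigma(X)=X$. Both conclusions then hold simultaneously for $X$.

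There is essentially no obstacle here; the proposition is a packaging corollary. The only point worth flagging is why the surjectivity assumption $\sigma(X)=X$ cannot be dropped: the example $X=\{0^\infty, 10^\infty\}$ mentioned right before the statement satisfies almost specification but is not surjective, hence not chain mixing, so the hypothesis $\sigma(X)=X$ is genuinely needed in order to invoke Lemma \ref{lem:dbar-shadowing-chain-mix}. A one-sentence remark to this effect would make the proof self-contained without repeating any of the technical machinery already developed in Section \ref{sec:approach}.
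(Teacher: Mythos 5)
Your proposal is correct and matches the paper's own proof essentially verbatim: the paper likewise applies Lemma \ref{lem:dbar-shadow-almost-spec} to obtain $\dbar$-shadowing and then invokes condition \eqref{cond:ii} of Theorem \ref{thm:characterisation} (your alternative route via Lemmas \ref{lem:dbar-shadowing-approach} and \ref{lem:dbar-shadowing-chain-mix} is just the unpacked proof of the implication \eqref{cond:ii}$\implies$\eqref{cond:iii}). Your remark on the necessity of $\sigma(X)=X$ is also the same point the paper makes immediately before the proposition.
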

\begin{proof} By Lemma \ref{lem:dbar-shadow-almost-spec} $X$ has the $\dbar$-shadowing property. Hence $X$ satisfies condition \ref{cond:ii} of Theorem \ref{thm:characterisation}, so we conclude that $X$ is $\dbar$-approachable.
\end{proof}
\begin{rem}\label{rem:no-intrinsic-erg} One may wonder if $\dbar$-shadowing or $\dbar$-approachability implies uniqueness of the measure of maximal entropy. It follows from Proposition \ref{prop:spec-imp-dbar-approach} and examples of surjective shift spaces with the almost specification property and multiple measures of maximal entropy presented in \cite{KOR} and \cite{Pavlov} that this is not the case.
\end{rem}

We can now show that the entropy-density results from \cite{EKW,PS} are simple consequences of entropy-density of transitive sofic shifts combined with $\dbar$-approachability.

\begin{cor}
If $X\subseteq\FS$ is a shift space with the almost specification property,
then $X$ has entropy-dense set of ergodic measures.
\end{cor}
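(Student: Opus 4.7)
The plan is to chain together the main results of this section. Suppose first that $\sigma(X)=X$. Then Proposition \ref{prop:spec-imp-dbar-approach} yields that $X$ is both $\dbar$-approachable and chain mixing. Chain mixing implies that the Markov approximations $X_n^M$ are topologically mixing, and hence transitive, sofic shifts for all but finitely many $n$. The $\dbar$-approachability gives $\Hdbar(X_n^M,X)\to 0$, and combined with Proposition \ref{prop:dbar-le-dund} this implies $\Hdund(X_n^M,X)\to 0$. Applying Corollary \ref{cor:gen-scheme-sofic-dund} to the sequence $(X_n^M)$ of transitive sofic shifts then delivers entropy-density of $\Mse(X)$ in $\Ms(X)$.

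For the general (possibly non-surjective) case, I would reduce to the surjective one by passing to the core $\tilde X := \bigcap_{n\ge 0}\sigma^n(X)$. This is a non-empty closed shift-invariant subset of $X$ satisfying $\sigma(\tilde X)=\tilde X$. Since each $\mu\in\Ms(X)$ is $\sigma$-invariant, $\mu(\sigma^n X)=\mu(\sigma^{-n}\sigma^n X)\ge \mu(X)=1$ for every $n$, so every invariant measure on $X$ is supported on $\tilde X$. Consequently $\Ms(X)=\Ms(\tilde X)$ and $\Mse(X)=\Mse(\tilde X)$, and entropy density in $\Ms(X)$ is equivalent to entropy density in $\Ms(\tilde X)$. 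It thus suffices to verify that $\tilde X$ inherits the almost specification property from $X$, after which the surjective case above handles $\tilde X$.

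The principal technical hurdle is this inheritance step. My approach would be to exploit the fact that each point of $\tilde X$ admits $\sigma$-preimages of arbitrary length in $X$: any word in $\lang(\tilde X)$ can be padded on both sides by long blocks taken from $\lang(X)$, and applying almost specification in $X$ to the padded words, followed by a compactness argument to extract limits lying in $\tilde X$, produces the concatenations witnessing almost specification of $\tilde X$. Aside from this step, every remaining piece of the argument is a direct consequence of results already established in this section and Section \ref{sec:approach}.
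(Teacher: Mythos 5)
Your surjective case is exactly the paper's route (Proposition \ref{prop:spec-imp-dbar-approach} plus Corollary \ref{cor:gen-scheme-sofic-dund} applied to the Markov approximations), and your reduction $\Ms(X)=\Ms(\tilde X)$ for $\tilde X=\bigcap_{n\ge 0}\sigma^n(X)$ is correct. The genuine gap is the step you yourself flag as the ``principal technical hurdle'': that the eventual core $\tilde X$ inherits almost specification. Your padding sketch does not go through with the definition of almost specification used in this paper. If you pad $u\in\lang(\tilde X)$ on the left by a block $a_n\in\lang(X)$ of length $n$ and apply the (two-word) almost specification property to the pair $(a_n u, w)$, the mistake budget for the first word is $g(n+|u|)$, and nothing prevents all of these mistakes from concentrating inside $u$; since $g(n+|u|)\to\infty$ as $n\to\infty$ (only $g(n)/n\to 0$ is assumed), the copy of $u$ can be completely destroyed, and iterating the two-word property instead (gluing $a_n$ to $u$ first, then the result to $w$, or in any other order) makes the error bounds compound in the same uncontrolled way. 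To repair the argument you would need the stronger, standard multi-word form of almost specification (the $g$-almost product property style statement, with a separate mistake budget $g(|w_i|)$ for each glued segment), treating $a_n$, $u$ and $w$ as three segments; then a pigeonhole over the finitely many pairs $(u',w')$ within the mistake budgets yields one pair admitting left extensions of every length, hence $u'w'\in\lang(\tilde X)$. That equivalence/strengthening is not established in the paper, so as written your proof has a hole exactly where the content lies.

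The paper avoids this entirely by reducing to the \emph{measure center} $X^+$ rather than the core: $\Ms(X)=\Ms(X^+)$ trivially, the inheritance of almost specification by $X^+$ is quoted as a known (nontrivial) theorem from \cite{WOC} and \cite{KKO}, and surjectivity $\sigma(X^+)=X^+$ follows from the Poincar\'e Recurrence Theorem since recurrent points are dense in $X^+$. If you do not want to prove the inheritance statement for the core yourself, the cleanest fix is to replace $\tilde X$ by $X^+$ and invoke those references; note that $X^+\subseteq\tilde X$, so the measure-center reduction is also the more economical one.
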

\begin{proof}Let $X^+$ be the measure-center of $X$, that is,
\[X^+=X\setminus\bigcup\{U\subseteq\FS:U\text{ open and }\mu(U)=0\text{ for every }\mu\in\Ms(X)\}.\]
Clearly, $\Ms(X)=\Ms(X^+)$. Furthermore, a shift space $X$ has the almost specification property if and only if $X^+$ does (see \cite[Theorem 6.7]{WOC} and \cite[Theorem 5.1]{KKO}). To finish the proof note that $\sigma(X^+)=X^+$ because recurrent points are dense in $X^+$ by the Poincar\'e Recurrence Theorem.
\end{proof}

\section{Entropy density of $\BB$-free shifts}
\label{sec:bfree}

In this  section, we demonstrate the utility of our approach by showing that for every $\BB$-free shift $X_\BB$, its hereditary closure $\tilde X_\BB$ has entropy-dense set of ergodic measures. Since it is quite common that $X_\BB=\tilde X_\BB$, we obtain many examples of entropy-dense $\BB$-free shifts. In fact, we show more, namely that the invariant measures on $\tilde X_\BB$ are also the space of shift-invariant measures on a potentially larger shift space $\tilde {X}^*_\BB$. % Let us introduce necessary notions.

Throughout this section we work with the alphabet $\alf = \{0,1\}$. Recall that the hereditary closure of a shift space $X\subseteq \{0,1\}^\infty$ is defined as
    \[\tilde X=\{ y \in \{0,1\}^\infty\mid\exists x \in X \  y \leq x\},\]
where the order $\leq$ on $\{0,1\}^\infty$ is defined coordinatewise, meaning that $y \leq x$ if $y_i \leq x_i$ for all $i \in \N_0$.
A shift space is \emph{hereditary} if it coincides with its own hereditary closure.
Given a set $\BB\subseteq \N$, we say that a positive integer number is $\BB$-free if it is not a multiple of any of the member of $\BB$. The set of all $\BB$-free numbers is denoted by $\FB\subseteq\N_0$, that is,
\[
\FB=\N_0\setminus\bigcup_{b\in\BB}b\N_0.
\]
We say that $\BB$ is \emph{primitive} if for each $b,b'\in \BB$ with $b\neq b'$ we have that $b$ does not divide $b'$. A set $\BB\subseteq\N$ is \emph{taut} if for every $b_0\in\BB$ we have
\[
\dund\left(\bigcup_{b\in\BB\setminus\{b_0\}}b\N_0\right) < \dund\left(\bigcup_{b\in\BB}b\N_0\right).
\]

The characteristic sequence of $\FB$ is denoted by $\eta_\BB :=1_{\FB}$, and we think of it as of an element of the full-shift $\{0,1\}^\infty$. The orbit closure $X_\BB\subseteq \{0,1\}^\infty$ of $\eta_\BB $ is called the \emph{$\BB$-free shift}. Given an enumeration of $\BB = \{b_1,b_2,\dots,\}$ with $b_i<b_j$ for $i<j$ %(allowing repetitions)
and $k\in\N$ we let $\BB|k = \{b_1,\dots,b_k\}$ denote the set of the $k$ smallest elements of $\BB$ and let $\FBk \subseteq \N_0$ stand for the set $\BB|k$-free integers. Let us write $\eta_{\BB|k}\in\{0,1\}^\infty$ for the characteristic function of $\FBk$. Note that $\eta_{\BB|k}$ is a periodic point in $\{0,1\}^\infty$ hence the $\BB|k$-free shift $X_{\BB|k}$ is just the orbit of $\eta_{\BB|k}$. We clearly have $\FB\subseteq\FBk$, hence $\eta_{\BB}\le\eta_{\BB|k}$ and   $\tilde{X}_{\BB}\subseteq\tilde{X}_{\BB|k}$.
%Observe also that we have
%\[
%\FB=\bigcap_{k=1}^\infty \FBk.
%\]
It also turns out that for every $k\in\N$ the hereditary shifts $\tilde{X}_{\BB|k}$ are transitive and sofic. This was first noticed in \cite{DKKPL} (transitivity in Proposition 3.17 and soficity in \S3.4.1). We provide an independent proof of a slightly more general fact.
\begin{prop}\label{prop:hereditary-closure-of-periodic-orbit-is-sofic} Let $x\in\{0,1\}^\infty$ be such that $\sigma^n(x)=x$ for some $n\ge 1$. If $X=\{\sigma^j(x):j=0,1,\ldots,n-1\}$ is the orbit of $x$, then its hereditary closure $\tilde{X}$ is a transitive sofic shift. Furthermore, $\tilde{X}=\{y\in\{0,1\}^\infty: y\le \sigma^j(x)\text{ for some }0\le j<n\}$. If $\sigma(x)\neq x$, then $\tilde{X}$ is not mixing.
\end{prop}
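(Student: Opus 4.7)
The proposition has three assertions, and I would address them in order.

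The explicit description of $\tilde{X}$ is nearly immediate from the definition of hereditary closure recalled in Section \ref{sec:definitions}: by definition $\tilde{X}=\{y:\exists x'\in X,\ y\le x'\}$, and substituting $X=\{\sigma^j(x):0\le j<n\}$ gives exactly the stated formula. One needs only to verify that the resulting set is indeed a shift space. Closedness follows from it being a finite union of compact sets $\{y:y\le \sigma^j(x)\}$, and shift-invariance from $\sigma(\{y:y\le \sigma^j(x)\})\subseteq \{y:y\le \sigma^{j+1\bmod n}(x)\}$, which uses $\sigma^n(x)=x$.

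For transitivity and soficity, I would present $\tilde{X}$ as $X_G$ for the labelled multigraph $G=(V,E,\tau)$ whose underlying directed graph is the $n$-cycle on $V=\{0,1,\dots,n-1\}$, where for each $i$ there is an edge from $i$ to $i+1\bmod n$ labelled $a$ for every $a\in\{0,1\}$ with $a\le x_i$ (so one edge when $x_i=0$ and two when $x_i=1$). An infinite path starting at vertex $j$ reads off a sequence $y$ with $y_k\le x_{(j+k)\bmod n}=\sigma^j(x)_k$, and every such $y$ arises this way, so $X_G=\tilde{X}$. The underlying graph is strongly connected, hence $\tilde{X}$ is a transitive sofic shift.

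For the non-mixing claim under $\sigma(x)\neq x$, let $p\ge 2$ be the minimal period of $x$ and set $S=\{i\in\Z/p\Z:x_i=1\}$. Take $u=w=x_0x_1\cdots x_{p-1}$, both in $\lang(\tilde{X})$. I claim that every $v$ with $uvw\in\lang(\tilde{X})$ satisfies $|v|\equiv 0\pmod p$; since $p\ge 2$, this rules out mixing. To prove the claim, suppose $uvw$ occurs at position $k$ in some $y\le \sigma^j(x)$. Then $y_{[k,k+p)}=u$ and $y_m\le x_{(m+j)\bmod p}$; for every $i\in S$ we have $y_{k+i}=x_i=1$, hence $(k+j+i)\bmod p\in S$, so $k+j+S\subseteq S\pmod p$. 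By finiteness of $S$ equality holds, so $k+j$ stabilises $S$, i.e.\ $\sigma^{k+j}(x)=x$, so minimality of $p$ forces $k+j\equiv 0\pmod p$. Applying the same reasoning at positions $[k+p+|v|,k+2p+|v|)$ where $w$ appears yields $k+|v|+j\equiv 0\pmod p$, and subtracting gives $|v|\equiv 0\pmod p$.

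The main obstacle is the non-mixing step. A tempting but insufficient strategy would be to invoke the characterisation of mixing sofic shifts via coprime closed-path lengths using the cycle graph above (whose only closed-path lengths are multiples of $n$), but that characterisation only requires the existence of \emph{some} strongly connected presentation with two closed paths of coprime length, so a priori $\tilde{X}$ might admit another presentation with this property. The direct language-level argument above avoids this issue entirely by working simultaneously with all presentations.
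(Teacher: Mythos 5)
Your proof is correct, and its first two parts coincide with the paper's argument: the paper's entire proof consists of exhibiting exactly your labelled graph presentation (the $n$-cycle with a second, parallel edge at each vertex $v_j$ with $x_j=1$; note the paper's text says this extra edge is ``labelled with $0$'', an evident typo for $1$ --- your labelling rule $a\le x_i$ is the intended one) and asserting that everything follows. Where you genuinely add value is the non-mixing claim, which the paper leaves entirely implicit. Your observation that the cycle presentation alone does not rule out mixing is exactly right: the paper's stated characterisation of mixing sofic shifts only asserts that \emph{some} strongly connected presentation has two closed paths of coprime lengths, so the absence of such paths in one presentation proves nothing without invoking presentation-independent period invariants (e.g.\ via the Fischer cover). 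Note also that the usual shortcut of factoring onto a nontrivial rotation is unavailable here, since $\tilde X$ contains the fixed point $0^\infty$; so a direct argument really is needed. Your language-level argument is sound: with $p\ge2$ the minimal period and $S=\{i\in\Z/p\Z: x_i=1\}$, the hypothesis $\sigma(x)\neq x$ forces $S$ to be a proper nonempty subset of $\Z/p\Z$ (if $S=\emptyset$ or $S=\Z/p\Z$ then $x\in\{0^\infty,1^\infty\}$ and $p=1$), the translation $k+j+S\subseteq S$ upgrades to equality by injectivity of translation on the finite group, equality is equivalent to $\sigma^{k+j}(x)=x$ and hence to $p\mid k+j$ by minimality, and the two occurrences of $u=w=x_0\cdots x_{p-1}$ then force $p\mid\, |v|$, which is incompatible with the paper's definition of topological mixing. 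The only cosmetic gap is that you use $S\neq\emptyset$ without saying so, but as noted it is automatic from $p\ge 2$. In short: same construction as the paper for soficity and transitivity, plus a complete and necessary argument for the final claim that the paper's one-sentence proof does not actually supply.
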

\begin{proof}
It is enough to notice that $\tilde{X}$ is presented by a labelled graph with vertices denoted $v_0,v_1,\ldots,v_{n-1}$ and edges and their labels defined as follows: for each $0\le j<n$ we put one  edge from $v_j$ to $v_{j+1\bmod n}$ labeled with $0$ and in case that $x_j=1$ we add one more edge labelled with $0$.
\end{proof}

A key fact about $\BB$-free integers which allows our argument to work is that the periodic sets $\FBk$ approximate $\FB$ with respect to the premetric $\dund$. This is a consequence of a classical result of Davenport and Erd\H{o}s.
\begin{thm}[Davenport--Erd\H{o}s, see \cite{DE1,DE2}]\label{thm:DavenportErdos}
	Let $\BB = \{b_1,b_2,\dots\} \subseteq \N$. Then \[\dund(\FBk\setminus \FB) \to 0 \quad\text{as}\quad k \to \infty.\]
\end{thm}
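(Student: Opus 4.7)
The plan is to reduce the claim to the classical Davenport--Erd\H{o}s density statement for sets of multiples. Write $M = \bigcup_{b \in \BB} b\N_0$ and $M_k = \bigcup_{i=1}^k b_i \N_0$, so that $\FB = \N_0 \setminus M$ and $\FBk = \N_0 \setminus M_k$. Since $M_k$ is a finite union of arithmetic progressions with period dividing $\lcm(b_1,\ldots,b_k)$, its natural density $d(M_k)$ exists, and consequently $d(\FBk) = 1 - d(M_k)$ exists as well.

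The first step of the argument is a routine density bookkeeping computation. Using $\FB \subseteq \FBk$, for every $n \ge 1$ we have
\[
\frac{|(\FBk \setminus \FB) \cap [0,n)|}{n} = \frac{|\FBk \cap [0,n)|}{n} - \frac{|\FB \cap [0,n)|}{n}.
\]
As $n \to \infty$, the first term on the right converges to $d(\FBk) = 1 - d(M_k)$, while the $\limsup$ of the second equals $1 - \dund(M)$ by complementation. Since $\liminf(a_n - b_n) = \lim a_n - \limsup b_n$ whenever $a_n$ converges, taking $\liminf$ yields
\[
\dund(\FBk \setminus \FB) = \dund(M) - d(M_k).
\]

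The second step is to apply the classical Davenport--Erd\H{o}s theorem for sets of multiples, which asserts that the natural densities $d(M_k)$ form a non-decreasing sequence with $\lim_{k\to\infty} d(M_k) = \dund(M)$. Combined with the displayed equality above, this immediately gives $\dund(\FBk \setminus \FB) \to 0$, as desired.

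The main obstacle is really this last ingredient $\lim_k d(M_k) = \dund(M)$, not the bookkeeping above; it is nontrivial because $M$ need not admit a natural density. The standard argument proceeds via the logarithmic density: one shows that $M$ admits a logarithmic density $\delta(M) = \lim_k d(M_k)$ by monotone continuity, and then that for sets of multiples one has $\delta(M) = \dund(M)$. I would cite this classical fact from \cite{DE1,DE2} rather than reproving it, as everything on the approximation side is then routine.
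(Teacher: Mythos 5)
Your proposal is correct and takes essentially the same route as the paper, which states this result as the classical Davenport--Erd\H{o}s theorem and simply cites \cite{DE1,DE2} without reproving it; your bookkeeping identity $\dund(\FBk\setminus\FB)=\dund(M)-d(M_k)$ together with the cited classical fact $d(M_k)\nearrow \delta(M)=\dund(M)$ is precisely the intended translation of the classical statement about sets of multiples into the $\BB$-free formulation. Nothing is missing.
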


Observing that
$\dund(\FBk\setminus \FB) =\dund(\eta_{\BB|k},\eta_\BB)$ we see that the characteristic function $\eta_\BB$ is the $\dund$-limit of a sequence of periodic points $\eta_{\BB|k}$. Furthermore, we can reformulate Davenport--Erd\H{o}s theorem in terms of the premetric $\Hdund$.
\begin{cor}\label{cor:DE-for-shifts}
Let $\BB = \{b_1,b_2,\dots\} \subseteq \N$. Then \[\Hdund(\tilde{X}_{\BB|k},\tilde{X}_{\BB}) \to 0 \quad\text{as}\quad k \to \infty.\]
\end{cor}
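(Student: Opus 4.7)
The plan is to estimate the two suprema appearing in $\Hdund(\tilde{X}_{\BB|k},\tilde{X}_{\BB})$ separately. One direction is essentially free: from $\eta_\BB\le\eta_{\BB|k}$ one gets $\eta_\BB\in\tilde{X}_{\BB|k}$, and because $\tilde{X}_{\BB|k}$ is shift-invariant, closed, and hereditary, the full hereditary closure $\tilde{X}_\BB$ of the orbit of $\eta_\BB$ lies inside $\tilde{X}_{\BB|k}$. Hence $\sup_{x\in\tilde{X}_\BB}\dund(x,\tilde{X}_{\BB|k})=0$, and everything reduces to controlling $\sup_{y\in\tilde{X}_{\BB|k}}\dund(y,\tilde{X}_{\BB})$.

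The main idea for the nontrivial direction is a ``masking'' construction. Fix $y\in\tilde{X}_{\BB|k}$. Since $\eta_{\BB|k}$ is periodic, Proposition \ref{prop:hereditary-closure-of-periodic-orbit-is-sofic} gives some $j\ge 0$ with $y\le\sigma^j(\eta_{\BB|k})$. I would then define the approximant coordinatewise by
\[
x_i:=y_i\cdot\sigma^j(\eta_\BB)_i.
\]
By construction $x\le\sigma^j(\eta_\BB)\in X_\BB$, so $x\in\tilde{X}_\BB$. A disagreement $x_i\ne y_i$ occurs only when $y_i=1$ while $\sigma^j(\eta_\BB)_i=0$, and the first condition together with $y\le\sigma^j(\eta_{\BB|k})$ forces $\sigma^j(\eta_{\BB|k})_i=1$. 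Consequently
\[
\bigl|\{0\le i<n:x_i\ne y_i\}\bigr|\le\bigl|(\FBk\setminus\FB)\cap[j,j+n)\bigr|.
\]

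The final step is to bound the right-hand side uniformly in $j$ and show it tends to $0$ as $k\to\infty$. Since $\FBk$ is periodic, $|\FBk\cap[j,j+n)|/n$ converges to the (well-defined, $j$-independent) density $d(\FBk)$, and shifting a window by $j$ does not change the upper density of $\FB$, so $\limsup_n|\FB\cap[j,j+n)|/n=\overline{d}(\FB)$ for every $j$. Splitting $\FBk\setminus\FB=\FBk\setminus\FB$ along $\FB\subseteq\FBk$ gives
\[
\liminf_{n\to\infty}\frac{|(\FBk\setminus\FB)\cap[j,j+n)|}{n}=d(\FBk)-\overline{d}(\FB),
\]
independently of $j$. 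For $j=0$ this expression is precisely $\dund(\FBk\setminus\FB)$, which tends to $0$ by Theorem \ref{thm:DavenportErdos}. Therefore $\dund(x,y)\le d(\FBk)-\overline{d}(\FB)\to 0$ uniformly in $y\in\tilde{X}_{\BB|k}$, and combining with the trivial direction yields $\Hdund(\tilde{X}_{\BB|k},\tilde{X}_{\BB})\to 0$.

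The only subtle point, and the one I would be most careful with, is the shift-invariance of the relevant densities: that $d(\FBk)$ really is independent of the starting point of the window (immediate from periodicity) and that the upper density of $\sigma^j\eta_\BB$ coincides with that of $\eta_\BB$ (a one-line argument, since only finitely many indices get shifted away). Without these, the bound obtained for $\dund(x,y)$ would depend on $j$ and one could not make $j$ range over an unbounded set of integers, which is exactly what happens as $y$ varies over $\tilde{X}_{\BB|k}$.
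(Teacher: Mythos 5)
Your proof is correct and essentially the same as the paper's: the trivial inclusion $\tilde{X}_{\BB}\subseteq\tilde{X}_{\BB|k}$ handles one supremum, and the other is handled by the identical masking construction (multiplying a point of $\tilde{X}_{\BB|k}$ coordinatewise by $\sigma^j(\eta_{\BB})$, so that disagreements are confined to the shifted copy of $\FBk\setminus\FB$), followed by the Davenport--Erd\H{o}s theorem. The only cosmetic difference is the last density step: you compute the shifted-window lower density exactly as $d(\FBk)-\overline{d}(\FB)$ using periodicity of $\FBk$, whereas the paper simply bounds $\dund(x,y)\le\dund(\eta_{\BB|k},\eta_{\BB})=\dund(\FBk\setminus\FB)$ via the elementary observation that translating the window start by a fixed $j$ cannot increase the lower density.
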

\begin{proof} Fix $k\in\N$. We claim that $\Hdund(\tilde{X}_{\BB|k},\tilde{X}_{\BB}) \le \dund(\eta_{\BB|k},\eta_\BB)$. Since $\tilde{X}_{\BB}\subseteq\tilde{X}_{\BB|k}$ it is enough to show that for every $x\in\tilde{X}_{\BB|k}$ there is $y\in\tilde{X}_{\BB}$ satisfying $\dund(x,y)\le \dund(\eta_{\BB|k},\eta_\BB)$. To this end, take $x\in\tilde{X}_{\BB|k}$.
By Proposition \ref{prop:hereditary-closure-of-periodic-orbit-is-sofic} there exists $m \geq 0$ such that $\sigma^m(\eta_{\BB|k} )\geq x$ coordinatewise. Consider $y\in \{0,1\}^\infty$ defined by the formula
\[
	y_i =
	\begin{cases}
	x_i & \text{ if } (\eta_{\BB|k})_{i+m}=(\eta_\BB)_{i+m}, \\
	0  & \text{ otherwise, i.e.{} if } (\eta_{\BB|k})_{i+m}=1 \neq 0 = (\eta_\BB)_{i+m}.
	\end{cases}
\]	
We immediately see that $y\le \sigma^m(\eta_{\BB} )$ coordinatewise, hence $y\in\tilde{X}_{\BB}$.
It is also clear that $\dund(x,y)\le \dund(\eta_{\BB|k},\eta_\BB)=\dund(\FBk\setminus \FB)$. We use Davenport--Erd\H{o}s theorem to conclude that $\Hdund(\tilde{X}_{\BB|k},\tilde{X}_{\BB}) \to 0$ as $k \to \infty$.
\end{proof}

%$(0,0,1,0,0,0,\dots)$.
%This motivates us to consider hereditary closure $\tilde X_\BB$ of $X_\BB$ as well as some other shift spaces alluded to above.  Namely, put
%  \begin{align*}
%  \tilde X^*_\BB=\bigcap_{ k = 1}^\infty \tilde X_{\BB^{(k)}}.
%  \end{align*}
%Bearing in mind future applications, we first record the most general variant of the entropy-density result for $\BB$-free shifts, pertaining to weakly rational sets.

%Let $M\colon \{0,1\}^\infty\times \{0,1\}^\infty\to \{0,1\}^\infty$ be the coordinatewise multiplication map

Before we state our main result regarding $\BB$-free shifts, let us first discuss some technical issues caused by the fact that the ``$\Hdund$-approximation'' appearing in \eqref{four-modes-3} and Corollary \ref{cor:DE-for-shifts} does not uniquely determine its ``limit''.

\begin{rem}
Observe that we can consider a whole spectrum of intermediate shift spaces associated to $\BB$. Namely, for $k \in \N$ we put
  \begin{align*}
   %\tilde X^{(k)}_\BB=\bigcap_{\BB'\subseteq\BB,\ \#\BB'=k} \tilde X_{\BB'}.
   \tilde X^{(k)}_\BB=\bigcap_{\substack{\BB'\subseteq\BB\\ \#\BB'=k}} \tilde X_{\BB'}.
  \end{align*}
These sets are again shift-invariant and hereditary. Moreover, setting
\[\tilde X^*_\BB=\bigcap^\infty_{k=1}\tilde X^{(k)}_{\BB}\]
we have the sequence of inclusions
\begin{equation}\label{eq:hierarchy}
 X_\BB\subseteq \tilde X_\BB\subseteq \tilde X^*_\BB \subseteq\ldots\subseteq \tilde X^{(2)}_\BB\subseteq \tilde X^{(1)}_\BB.
\end{equation}
We will see later (see Remark \ref{rem:hierarchy}) that the second inclusion in \eqref{eq:hierarchy} may be strict. As far as we know, only the sets $X_\BB$, $\tilde X_\BB$, and $\tilde X^{(1)}_\BB$ have appeared in the literature before. The shift space $\tilde X^{(1)}_\BB$ is the largest shift-space related to $\BB$-free constructions. It is usually called the \emph{$\BB$-admissible shift} and its elements are  \emph{$\BB$-admissible sequences} (see \cite{DKKPL}). Let us also point out that the hierarchy introduced above is still rather rough: given any countable family $\mathcal C$ of subsets of $\BB$, the intersection $\bigcap_{\BB'\in\mathcal C} \tilde X_{\BB'}$ is a hereditary shift space that contains $X_{\BB}$.
\end{rem}

\begin{lem}\label{lem:hierarchy} For $\BB\subseteq\N$ we have
\[\tilde{X}^*_{\BB}=\bigcap_{\substack{\BB'\subseteq\BB\\ \BB'\text{ finite}}}\tilde X_{\BB'}=\bigcap^\infty_{k=1}\tilde X_{\BB|k}.
\]
\end{lem}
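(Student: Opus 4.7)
The plan is to establish the two equalities separately. The first identity, $\tilde X^*_\BB = \bigcap_{\BB'\text{ finite}} \tilde X_{\BB'}$, should follow immediately by reorganising the intersection according to the cardinality of the finite set $\BB'$: every finite subset of $\BB$ has some cardinality $k$, so
\[
\bigcap_{\substack{\BB'\subseteq\BB\\ \BB'\text{ finite}}}\tilde X_{\BB'}
\;=\; \bigcap_{k=0}^{\infty}\bigcap_{\substack{\BB'\subseteq\BB\\ \#\BB'=k}}\tilde X_{\BB'}
\;=\; \bigcap_{k=1}^{\infty} \tilde X^{(k)}_{\BB}
\;=\; \tilde X^*_{\BB},
\]
where the $k=0$ term is the full shift and so may be dropped.

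For the second identity, one direction is trivial, because each $\BB|k$ is itself a finite subset of $\BB$, which yields $\bigcap_{\BB'\text{ finite}} \tilde X_{\BB'} \subseteq \bigcap_{k=1}^\infty \tilde X_{\BB|k}$. For the reverse inclusion, the main step will be a \emph{monotonicity lemma}: whenever $\BB' \subseteq \BB'' \subseteq \N$, one has $\tilde X_{\BB''} \subseteq \tilde X_{\BB'}$. To prove this I would observe that any $\BB''$-free integer is automatically $\BB'$-free, whence $\eta_{\BB''} \le \eta_{\BB'}$ coordinatewise, and the inequality is preserved by the shift. Given $x \in X_{\BB''}$, choose indices $(j_n)$ with $\sigma^{j_n}(\eta_{\BB''}) \to x$, and by compactness of $\{0,1\}^\infty$ pass to a subsequence for which $\sigma^{j_n}(\eta_{\BB'}) \to y$ for some $y \in X_{\BB'}$; closedness of the coordinatewise order in $\{0,1\}^\infty$ forces $x \le y$, so $x \in \tilde X_{\BB'}$. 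Heredity of $\tilde X_{\BB'}$ then upgrades this to $\tilde X_{\BB''} \subseteq \tilde X_{\BB'}$.

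With monotonicity in hand I would finish as follows. Given any finite $\BB' \subseteq \BB$, pick $k$ large enough that every element of $\BB'$ lies among the $k$ smallest elements of $\BB$, so that $\BB' \subseteq \BB|k$; monotonicity then yields $\tilde X_{\BB|k} \subseteq \tilde X_{\BB'}$, and hence $\bigcap_{k=1}^\infty \tilde X_{\BB|k} \subseteq \tilde X_{\BB'}$. Intersecting over all finite $\BB' \subseteq \BB$ delivers the remaining inclusion. The only substantive step is the monotonicity lemma, which is where compactness of the full shift and closedness of the pointwise order are actually used; everything else is bookkeeping.
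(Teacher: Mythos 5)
Your proposal is correct and takes essentially the same route as the paper: both arguments come down to the trivial inclusions among the three intersections together with the monotonicity $\BB'\subseteq\BB|k\implies\tilde X_{\BB|k}\subseteq\tilde X_{\BB'}$, which the paper invokes with a bare ``hence'' and which you additionally verify via compactness of $\{0,1\}^\infty$ and closedness of the coordinatewise order. That extra verification is a sound fleshing-out of an implicit step rather than a different proof, and the rest (regrouping by cardinality, choosing $k$ with $\BB'\subseteq\BB|k$) matches the paper's chain of inclusions.
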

\begin{proof}
Fix $n \in \N$. We have
\[
\bigcap_{\substack{\BB'\subseteq\BB\\ \BB'\text{ finite}}}\tilde X_{\BB'}\subseteq\tilde{X}^*_{\BB}=\bigcap^\infty_{k=1}\tilde X^{(k)}_{\BB}\subseteq \tilde X^{(n)}_{\BB}=\bigcap_{\substack{\BB'\subseteq\BB\\ \#\BB'=n}} \tilde X_{\BB'}
\subseteq \tilde{X}_{\BB|n}.\]
It follows that
\[
\bigcap_{\substack{\BB'\subseteq\BB\\ \BB'\text{ finite}}}\tilde X_{\BB'}\subseteq \tilde{X}^*_{\BB}\subseteq\bigcap^\infty_{k=1}\tilde X_{\BB|k}.
\]
Assume that $x\in \tilde X_{\BB|k}$ for every $k\in\mathbb{N}$. Take any finite set $\BB'\subseteq\BB$.
Then there exists $n\in\mathbb{N}$ such that $\BB'\subseteq(\BB|n)$, hence
$\tilde X_{\BB|n}\subseteq\tilde X_{\BB'}$. We conclude that $x\in \tilde X_{\BB'}$ for every  finite set $\BB'\subseteq\BB$. Therefore
\[
\bigcap^\infty_{k=1}\tilde X_{\BB|k}\subseteq\bigcap_{\substack{\BB'\subseteq\BB\\ \BB'\text{ finite}}}\tilde X_{\BB'},
\]
which finishes the proof.
\end{proof}
%It is a basic observation that $\tilde X^*_\BB$ is equal to the intersection of $\tilde X_{\BB'}$, where %$\BB'$ runs over all finite subsets of $\BB$, i.e.
%\[\tilde{X}^*_{\BB}=\bigcap_{\substack{\BB'\subseteq\BB\\ \BB'\text{ finite}}}\tilde %X_{\BB'}.%=\bigcap^\infty_{k=1}\tilde X^{(k)}_{\BB}.
%\]
%We also have
%\begin{equation}\label{inc:b-free-b-star-b-hat}
%\tilde{X}_{\BB}\subseteq\tilde X^*_\BB =\bigcap_{k=1}^\infty\tilde{X}_{\BB|k},
%\end{equation}

%We do not know if the same observation is true for the second inclusion in \eqref{inc:b-free-b-star-b-hat}.

\begin{rem}\label{rem:hierarchy}
All the hierarchy \eqref{eq:hierarchy} collapses to a single shift-space ($X_\BB=\tilde X^{(1)}_\BB$) whenever $\BB$ is a taut set containing an infinite set of pairwise coprime integers (see \cite[Theorem B]{DKKPL} and \cite[Corollary 2]{Keller}, cf. proof of Corollary \ref{cor:B-free} below). However, in general, each inclusion in \eqref{eq:hierarchy} can be strict. For the example showing that the first inclusion may be strict see \cite{DKKPL}. We show that $\tilde X^{*}_\BB$ can differ from the $\BB$-admissible shift $\tilde X^{(1)}_\BB$. Indeed, if $\BB=\{4,6\}$, then $\tilde X^{*}_\BB$ and $\tilde X^{(1)}_\BB$ differ. More generally, if $\BB = \{ q p_1, q p_2, \dots, q p_k \}$ where $q \geq k$ and $p_1,\dots,p_k$ are distinct primes then  $\tilde X^{*}_\BB = \tilde X^{(k)}_\BB$ and $\tilde X^{(k-1)}_\BB$ differ.

The following example shows that $\tilde X_\BB$ and $\tilde X^{*}_\BB$ can be different as well.
\begin{exmp}
	Let $p$ be a large prime number (for instance $p = 107$), put $l = (p-2)(p-1)$, and $\BB = \{p-2, p-1,p \} \cup \{n \geq l \ : \ p \nmid n\}$. Consider the characteristic sequence $x = 1_A \in \{0,1\}^\infty$ of the set $A \subseteq \N_0$ given by
	\[
		A = \{j\in\N_0:0\le j <l\} \setminus \left( (p-2) \N \cup (p-1) \N \cup (p \N - 1) \right).
	\]
Then $x \in \tilde X^{*}_\BB$, because for any finite $\BB' \subseteq \BB$ we can find $n \in \N$ with $n \equiv 1 \bmod{p}$ and $n \equiv 0 \bmod{b}$ for all $b \in \BB' \setminus \{p\}$.
	
	On the other hand, we claim that $x \notin \tilde{X}_\BB$. For the sake of contradiction, suppose that $x  \in \tilde{X}_\BB$. Since $\tilde{X}_\BB$ is finite, there exists $m \in \N_0$ such that $x \leq \sigma^m(\eta_{\BB})$ coordinatewise. Since $n \not \in \FB$ for $n \geq l$ and $l - 1 \in A$, it follows that $m = 0$. However, $p \in A$ while $p \not \in \FB$ so $m \neq 0$, which is the sought contradiction. 	
\end{exmp}
\end{rem}

%Nevertheless, the simplices of invariant measures for shift spaces listed in \eqref{inc:b-free-b-star-b-hat} are equal.

\begin{thm}\label{thm:entropy-dense-Bfree}
  Let $\BB\subseteq\N$. Then ergodic measures are entropy-dense for $\tilde{X}_\BB$ and
  \[\Ms(\tilde{X}_\BB)=\Ms(\tilde X^{*}_\BB)=\bigcap_{k=1}^\infty\Ms(\tilde{X}_{\BB|k}).\]
  \end{thm}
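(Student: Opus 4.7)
The plan is to assemble Corollary \ref{cor:DE-for-shifts}, Proposition \ref{prop:hereditary-closure-of-periodic-orbit-is-sofic}, Corollary \ref{cor:gen-scheme-sofic-dund} and Corollary \ref{cor:gen-scheme-def-Ms}, using $(\tilde X_{\BB|k})_{k=1}^\infty$ as the approximating sequence of transitive sofic shifts common to both $\tilde X_\BB$ and $\tilde X^{*}_\BB$.

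First, I would verify the approximation hypothesis. Since $\eta_{\BB|k}$ is a periodic point, $\tilde X_{\BB|k}$ is a transitive sofic shift by Proposition \ref{prop:hereditary-closure-of-periodic-orbit-is-sofic}. By Corollary \ref{cor:DE-for-shifts} (which is the shift-space reformulation of the Davenport--Erd\H os theorem), $\Hdund(\tilde X_{\BB|k},\tilde X_\BB)\to 0$ as $k\to\infty$. Entropy-density of ergodic measures in $\Ms(\tilde X_\BB)$ is then an immediate consequence of Corollary \ref{cor:gen-scheme-sofic-dund}.

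For the identification of invariant measures, I would first observe that Lemma \ref{lem:hierarchy} gives $\tilde X^{*}_\BB=\bigcap_{k=1}^\infty \tilde X_{\BB|k}$, so Proposition \ref{prop:intersection} immediately yields $\Ms(\tilde X^{*}_\BB)=\bigcap_{k=1}^\infty \Ms(\tilde X_{\BB|k})$. It remains to show $\Ms(\tilde X_\BB)=\Ms(\tilde X^{*}_\BB)$. The inclusion $\tilde X_\BB\subseteq \tilde X^{*}_\BB$ is clear from \eqref{eq:hierarchy}, so $\Ms(\tilde X_\BB)\subseteq \Ms(\tilde X^{*}_\BB)$ is automatic; the reverse inclusion is the real work and I expect it to be the main subtlety.

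My approach to this reverse inclusion is to pass everything through the sofic approximations and invoke Corollary \ref{cor:gen-scheme-def-Ms}. I already have $\Hdund(\tilde X_{\BB|k},\tilde X_\BB)\to 0$. To establish $\Hdund(\tilde X_{\BB|k},\tilde X^{*}_\BB)\to 0$, note that $\tilde X^{*}_\BB\subseteq \tilde X_{\BB|k}$ makes $\sup_{y\in \tilde X^{*}_\BB}\dund(y,\tilde X_{\BB|k})=0$ trivially, while from $\tilde X_\BB\subseteq \tilde X^{*}_\BB$ we get
\[\sup_{x\in \tilde X_{\BB|k}}\dund(x,\tilde X^{*}_\BB)\le \sup_{x\in \tilde X_{\BB|k}}\dund(x,\tilde X_\BB)\le \Hdund(\tilde X_{\BB|k},\tilde X_\BB)\to 0.\]
With both $\Hdund(\tilde X_{\BB|k},\tilde X_\BB)\to 0$ and $\Hdund(\tilde X_{\BB|k},\tilde X^{*}_\BB)\to 0$ in hand, Corollary \ref{cor:gen-scheme-def-Ms} delivers $\Ms(\tilde X_\BB)=\Ms(\tilde X^{*}_\BB)$, completing the chain of equalities.

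The only delicate point worth flagging is that $\dund$ is merely a premetric, so I cannot simply ``triangulate'' from $\tilde X_\BB\subseteq \tilde X^{*}_\BB\subseteq \tilde X_{\BB|k}$ to obtain $\Hdund(\tilde X_\BB,\tilde X^{*}_\BB)=0$ directly (as noted in the remark preceding Corollary \ref{cor:gen-scheme-def-Ms}). The point of routing through the sofic approximations is precisely to avoid this: the common $\Hdund$-limit through transitive sofic shifts is what forces the two simplices of invariant measures to agree, even when the underlying shift spaces themselves may be distinct (as Remark \ref{rem:hierarchy} shows they can).
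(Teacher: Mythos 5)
Your proposal is correct and follows essentially the same route as the paper: entropy-density via Corollary \ref{cor:gen-scheme-sofic-dund} applied to the transitive sofic shifts $\tilde X_{\BB|k}$ with the Davenport--Erd\H{o}s approximation (Corollary \ref{cor:DE-for-shifts}), the second equality via Lemma \ref{lem:hierarchy} and Proposition \ref{prop:intersection}, and $\Ms(\tilde X_\BB)=\Ms(\tilde X^{*}_\BB)$ via the sandwich $\tilde X_\BB\subseteq\tilde X^{*}_\BB\subseteq\tilde X_{\BB|k}$ combined with Corollary \ref{cor:gen-scheme-def-Ms}. Your explicit unpacking of the inequality $\Hdund(\tilde X_{\BB|k},\tilde X^{*}_\BB)\le\Hdund(\tilde X_{\BB|k},\tilde X_\BB)$, and your flagging of why the premetric failure of the triangle inequality forces the detour through a common approximating sequence, merely spell out details the paper leaves implicit.
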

\begin{proof}
The second equality is a consequence of Proposition \ref{prop:intersection} and Lemma \ref{lem:hierarchy}.
%Let
%\[
%\hat{X}_{\BB}=\bigcap_{k=1}^\infty\tilde{X}_{\BB|k}
%\]
%	Pick any enumeration $\BB = \{b_1,b_2,\ldots\}$. Recall that $\underline d(\FBk{k}\setminus \FB) \to 0$ as $k \to \infty$ by Theorem \ref{thm:DavenportErdos}. Moreover, %$\tilde X^{*}_\BB = \bigcap_{k=1}^\infty \tilde X_{\BBk{k}}$. It remains to apply Theorem \ref{thm:entropy-dense-wirational}.
Fix $k \in \N$. We have
\[\tilde{X}_{\BB}\subseteq\tilde{X}^*_{\BB}%\subseteq\hat{X}_{\BB}
\subseteq \tilde{X}_{\BB|k}.\] It follows that we have % a chain of inequalities
\[
%\dund(\tilde{X}_{\BB|k},\hat{X}_{\BB}) \leq
\dund(\tilde{X}_{\BB|k},\tilde{X}^*_{\BB}) \leq \dund(\tilde{X}_{\BB|k},\tilde{X}_{\BB}).
\]
In particular, by Corollary \ref{cor:DE-for-shifts} all ``distances'' above tend to $0$ as $k \to \infty$ and $\Ms(\tilde{X}_{\BB})=\Ms(\tilde{X}^*_{\BB})$ %=\Ms(\hat{X}_{\BB})
by Corollary \ref{cor:gen-scheme-def-Ms}. We finish the proof applying Corollary \ref{cor:gen-scheme-sofic-dund}.
\end{proof}
%\begin{rem}\label{rem:dbar-approach-B-free}Note that for $\tilde X_\BB$ we have only a $\Hdund$-approximation by transitive sofic shifts, which are not mixing.
%It is not hard to see that neither $X_\BB$ nor $\tilde X_\BB$ is chain mixing, hence these shift spaces can not have the $\dbar$-shadowing property. Below, we construct examples that are neither $\dbar$-approachable.
%\end{rem}

\begin{exmp}\label{exmp:bfree-non-dbar-approach}
Note that for $\tilde X_\BB$ we have only a $\Hdund$-approximation by transitive sofic shifts, which are not mixing. It raises a question, whether $X_\BB$ or $\tilde X_\BB$ can be $\dbar$-approachable. Note that the shift space in Example \ref{exmp:non-dbar-approach} can be equivalently defined as $\tilde X_\BB$, where $\BB=\{2\}$ showing that hereditary closures of some $\BB$-free shifts are non-$\dbar$-approachable . We will extended this example and prove that there exists an infinite set $\BB$ such that $\tilde X_\BB$ is not $\dbar$-approachable. Let $b_1=2$ and pick $b_2,b_3,\ldots$ such that
\begin{equation}\label{eq:one-eight}
  \sum^\infty_{k=2}\frac1{b_k}< 1/32.
\end{equation}
Note that \eqref{eq:one-eight} ensures that $\Hdbar(\tilde X_{\{2\}},\tilde X_\BB)< 1/32$. In particular, for $n$ large enough, there is a word $u\in \lang(\tilde X_\BB)$ with $|u|=2n$ such that $\dHam((01)^n,u)<1/4$. In particular, at least half among the even entries in $u$ are occupied by $1$. In other words,
\[
|\{1\le j\le n: u_{2j}=1\}|\ge \frac12 n.
\]
By heredity of $\tilde X_\BB$, the periodic point $y=(u0^{2n-1})^\infty$ belongs to $(\tilde X_\BB)^M_{2n}$. Reasoning as in Example \ref{exmp:non-dbar-approach} we obtain
%But $y$ has at least $1/8$ of odd positions and $1/8$ of even positions occupied by $1$. Hence,
$\dbar(y,x)\geq 1/16$ for every $x\in \tilde X_{\{2\}}$. It implies that $\Hdbar((\tilde X_\BB)^M_{2n},\tilde X_{\{2\}})\ge 1/16$ yielding
\[\Hdbar((\tilde X_\BB)^M_{2n},\tilde X_\BB)> 1/32,\]
  for $n$ large enough. So the Markov approximations are $\dbar$-far from $\tilde X_\BB$. Note that we can choose $\BB$ that consists of pairwise prime numbers and still satisfies \eqref{eq:one-eight}. For such a set $\BB$ we have $\tilde X_\BB=X_\BB$ and the $\BB$-free shift  $X_\BB$ is not $\dbar$-approachable.
\end{exmp}

So far we have discussed only the hereditary closure $\tilde X_\BB$ of $X_\BB$. In general, $X_\BB$ does not need to be hereditary, as shown by the example $\BB = \N_{\geq 3}$ where $X_\BB$ is the (finite) orbit of the point $011000\dots$ and hence does not contain the point $01000\ldots$. Nevertheless, for many examples of the sets $\FB$ studied in the literature the associated $\BB$-free shift turns out to be hereditary (e.g. the square-free shift or shifts associated with abundant numbers, see \cite{DKKPL}). For the record, we note a version of Theorem \ref{thm:entropy-dense-Bfree} for these shifts.
But first, note that every hereditary shift contains the fixed point $0^\infty$, and since every $\BB$-free shift has a unique minimal subsystem (by Theorem A in \cite{DKKPL}), the set $\{0^\infty\}$ must be the unique minimal subset for a hereditary $\BB$-free shift. In other words, an hereditary $\BB$- free shift must be proximal. For $\BB$-free shifts proximality is equivalent to the fact that the set $\BB$ contains an infinite pairwise co-prime subset. Therefore hoping for $X_\BB=\tilde X_\BB$ we have to assume the latter condition.
\begin{cor}\label{cor:B-free}If $\BB$ contains an infinite sequence of pairwise co-prime integers, then ergodic measures are entropy-dense in $\Ms(X_\BB)$.
\end{cor}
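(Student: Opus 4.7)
The plan is to reduce Corollary \ref{cor:B-free} to the already-established Theorem \ref{thm:entropy-dense-Bfree}. That theorem gives entropy-density of ergodic measures in $\Ms(\tilde X_\BB)$, and since $X_\BB \subseteq \tilde X_\BB$, the corollary follows once we verify that under the hypothesis that $\BB$ contains an infinite pairwise coprime subset we have $X_\BB = \tilde X_\BB$, equivalently that $X_\BB$ is hereditary. This equality is already invoked (without proof) in Example \ref{exmp:bfree-non-dbar-approach}, where the paper states flatly that for $\BB$ consisting of pairwise coprime numbers $\tilde X_\BB = X_\BB$; it is a known consequence of Theorem A in \cite{DKKPL}, which for $\BB$-free shifts ties together proximality of $X_\BB$, the membership $0^\infty\in X_\BB$, and the infinite pairwise coprime condition on $\BB$.

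To sketch why hereditarity should follow from the coprime condition: given $y \in \tilde X_\BB$, the definition supplies $x \in X_\BB$ with $y \leq x$, and since $X_\BB$ is the orbit closure of $\eta_\BB$, for every prefix length $n$ there is some $m$ with $x_{[0,n)} = \eta_\BB|_{[m, m+n)}$. To show $y \in X_\BB$ it then suffices to produce $m'$ with $\eta_\BB|_{[m', m'+n)} = y_{[0,n)}$. Let $T = \{i \in [0,n) : y_i = 0,\ \eta_\BB(m + i) = 1\}$ be the ``excess'' $1$'s to suppress. Choose pairwise coprime $c_1, \ldots, c_{|T|} \in \BB$ larger than $n$ and assign distinct $c_{j(i)}$ to each $i \in T$; the Chinese Remainder Theorem then yields an arithmetic progression of $m'$ with $c_{j(i)} \mid m' + i$ for $i \in T$, forcing $\eta_\BB(m' + i) = 0$ at these positions. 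The delicate step is exhibiting some $m'$ in this progression for which simultaneously $m' + i \in \FB$ at every position $i \in [0,n) \setminus T$ where $y_i = 1$, as well as $m' + i \notin \FB$ at the remaining positions where $y_i = 0$; this uses the infinite pool of coprime elements of $\BB$ beyond $\{c_1,\ldots,c_{|T|}\}$ together with a density argument on the CRT-induced arithmetic progression.

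The main obstacle is precisely this last density step: controlling, within the CRT-constrained progression, the $\BB$-free status of all the remaining positions of the window. This is the only non-trivial combinatorial content of the proof beyond the machinery of the preceding sections, and rather than carrying it out explicitly, I would follow the convention established in Example \ref{exmp:bfree-non-dbar-approach} and cite \cite{DKKPL}. Once $X_\BB = \tilde X_\BB$ is granted, entropy-density of $\Ms(X_\BB) = \Ms(\tilde X_\BB)$ is immediate from Theorem \ref{thm:entropy-dense-Bfree}, completing the proof.
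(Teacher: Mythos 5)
Your overall skeleton --- reduce Corollary \ref{cor:B-free} to Theorem \ref{thm:entropy-dense-Bfree} by showing $X_\BB=\tilde X_\BB$ --- is indeed the paper's endgame, but there is a genuine gap in how you get that equality. The hypothesis that $\BB$ contains an infinite pairwise coprime subset gives only \emph{proximality} of $X_\BB$; it does not give heredity. The heredity result you need is Keller's \cite[Corollary 2]{Keller}, which requires $\BB$ to be \emph{taut} in addition to $X_\BB$ being proximal, and tautness does not follow from your hypothesis. The paper therefore inserts a reduction you are missing: by Theorem C of \cite{DKKPL}, every $\BB$ admits a taut $\BB'$ with $\tilde X_{\BB'}\subseteq \tilde X_\BB$ and $\Ms(\tilde X_{\BB'})=\Ms(\tilde X_\BB)$, so one may assume $\BB$ taut, and only then apply Keller to conclude $X_\BB=\tilde X_\BB$ and finish with Theorem \ref{thm:entropy-dense-Bfree}. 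Your citation of Theorem A of \cite{DKKPL} is off target: that theorem gives the unique minimal subsystem (hence proximality statements), not heredity. Likewise, the flat assertion in Example \ref{exmp:bfree-non-dbar-approach} concerns a $\BB$ which is itself pairwise coprime (hence automatically taut), a strictly stronger hypothesis than the corollary's ``contains an infinite pairwise coprime subset,'' so it cannot be borrowed here.

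Your CRT sketch fails at exactly the step you flag, and not for merely technical reasons. The congruences can suppress the excess $1$'s (choosing large coprime $c_{j(i)}\in\BB$ with $c_{j(i)}\mid m'+i$) and can even preserve the zeros of $\eta_\BB$ at the remaining zero positions (impose $m'\equiv m$ modulo one divisibility witness per position); but forcing $m'+i\in\FB$ at every position with $y_i=1$ is a system of \emph{infinitely many} non-divisibility conditions, one for each $b\in\BB$, which no finite CRT system plus naive density argument controls --- this is precisely where tautness is used in \cite{DKKPL} and \cite{Keller}, and it is the standing hypothesis in all the heredity theorems there. Without the tautening reduction, the equality $X_\BB=\tilde X_\BB$ is simply not available under your hypothesis, so the deferred-citation strategy does not close the argument. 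In short: right final reduction, but the tautening step via \cite[Theorem C]{DKKPL} is the actual mathematical content of the paper's proof, and it is absent from yours.
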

\begin{proof} By Theorem C in \cite{DKKPL}, for every $\BB\subseteq\N$ there is a unique taut $\BB'\subseteq \N$ such that $\tilde X_{\BB'}\subseteq \tilde X_\BB$ and $\Ms(\tilde X_{\BB'})=\Ms( \tilde X_\BB)$. Therefore we may assume that $\BB$ is taut and proximal. Then we use a recent result of Keller \cite[Corollary 2]{Keller}, who complemented some results from \cite{DKKPL} by showing that if $\BB$ is taut and $X_\BB$ is proximal, then $X_\BB$ is hereditary, that is $X_\BB=\tilde X_\BB$. We finish the proof by applying Theorem \ref{thm:entropy-dense-Bfree}.
\end{proof}
\begin{rem}
Note that $\Ms(X_\BB)$ may be trivial, that is, its unique element may be the Dirac measure concentrated on the fixed point $0^\infty$. This is the case when $\BB$ is \emph{Behrend} (see \cite[p. 5437]{DKKPL}).
\end{rem}

\begin{rem}\label{rem:more-than-B-free}
Note that Theorem \ref{thm:entropy-dense-Bfree} remains true if we replace $\tilde{X}_{\BB}$ by any shift space which is the hereditary closure of the orbit closure of a characteristic function of a set $A\subseteq \N_0$ such that there exists a sequence of periodic sets $(A_k)_{k=1}^\infty$ satisfying
\begin{gather*}
A\subseteq A_k\quad\text{for every }k\in\N,\\
\dund(A_k,A) \to 0\quad (k \to \infty).
\end{gather*}
Such shift spaces have been already considered in the literature \cite{KKL,KLO2}. A closely related notion of a \emph{rational set} (a subset of $\N$ whose characteristic function can be $\dbar$-approximated by periodic characteristic functions) also attracted attention, see \cite{BR,BKPLR,Deka}.
\end{rem}

%\section{$\dbar$-approachable examples of proximal shift spaces}
% \subsection{Proximal system with entropy-dense ergodic measures}
%\label{proximal}
%\input{proximal}

%\section{$\dbar$-approachable examples of minimal shift spaces}
% \subsection{Proximal system with entropy-dense ergodic measures}
%\label{minimal}
%\input{minimal_mixing_input}

%\appendix
%\section{$\dbar$-approachability vs. $\dbar$-stability}
%\input{dbar-stability}
\section{Inner $\dbar$-approachability}\label{sec:inner}
So far we have considered shift spaces, which are approximated from the outside, that is, approximating subshifts are not contained in the approximated shift space. Inspired by an idea of approximation  given by Dan Thompson in his unpublished manuscript \cite{Thompson-note} we are going to show that every topologically mixing $S$-gap shift is
approximated in the $\Hdbar$ sense by a sequence of shift spaces of finite type contained in $X_S$. We call this property
\emph{inner $\dbar$-approachability}.

We say that a shift space $X\subseteq\FS$ is \emph{inner $\dbar$-approachable} if it contains a sequence $(X_n)_{n=1}^\infty$ of mixing shifts of finite type such that
\[
\overline{\bigcup_{n=1}^\infty X_n}=X\quad\text{and}\quad \lim_{n\to\infty}\Hdbar(X_n,X)=0.
\]

Given $S\subseteq\N$, we write $S=\{n_1,n_2,\ldots\}$ with $n_i<n_{i+1}$ for $i < |S|$, where $|S|$ stands for the cardinality of $S$ ($|S|=\infty$ if $S$ is infinite).
The $S$-gap shift $X_S$ is a shift space over $\{0,1\}$ consisting of all sequences such that the number of 0s between
any two successive occurrences of the symbol 1 belongs to $S$. Equivalently, $\{10^n1:n\notin S\}$ is the collection of forbidden sequences for $X_S$.
By \cite[Example 3.4]{Jung}, $X_S$ is topologically mixing if and only if $\gcd \{n+1:n\in S\}=1$, and topologically mixing $X_S$ has
the specification property if and only if $\sup_i |n_i - n_{i+1}|< \infty$. Note that all sequences in $X_S$ are labels of infinite paths in of the labelled directed graph $G_S = (V,E_S)$, where $V=\{v_j:0\le j\le |S|\}$ is the set of vertices and there is an edge $v_i\to v_j$ in $E_S$ if and only if $j =i+1$ or $i \in S$ and
$j = 0$. We label each edge $v_i\to v_{i+1}$ with 0 and all other edges with 1.

\begin{prop}\label{prop:inner-dbar-S-gap} Every mixing $S$-gap shift is inner $\dbar$-approachable.
\end{prop}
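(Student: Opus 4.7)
The plan is to exhibit, for each sufficiently large $k$, a mixing shift of finite type $X_k\subseteq X_S$ with $\overline{\bigcup_k X_k}=X_S$ in the product topology and $\Hdbar(X_k,X_S)\to 0$. The natural candidate is
\[
X_k:=X_{\mathscr{F}_k}, \qquad \mathscr{F}_k:=\{10^j1\mid 0\le j\le k,\ j\notin S\}\cup\{0^{k+1}\},
\]
so that $y\in X_k$ iff every gap between consecutive $1$s of $y$ belongs to $S\cap[0,k]$ and every run of $0$s has length at most $k$; in particular $X_k\subseteq X_S$ is automatic. Using the mixing hypothesis $\gcd\{n+1:n\in S\}=1$ we pick $k_0$ with $\gcd\{n+1:n\in S\cap[0,k_0]\}=1$; for $k\ge k_0$ the shift $X_k$ is presented by the subgraph of $G_S$ on the vertices $v_0,\dots,v_k$, which is strongly connected and contains closed loops of coprime lengths, hence $X_k$ is a mixing SFT. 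The density condition $\overline{\bigcup_k X_k}=X_S$ is immediate because every $u\in\lang(X_S)$ belongs to $\lang(X_k)$ as soon as $k$ exceeds $|u|$ and the largest gap occurring in $u$.

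The main step is the bound $\Hdbar(X_k,X_S)\to 0$. Since $X_k\subseteq X_S$ this reduces to proving that for every $x\in X_S$ there is $y\in X_k$ with $\dbar(x,y)\le \varepsilon_k$ for some $\varepsilon_k\to 0$ independent of $x$. Let $n^*_k:=\max(S\cap[0,k])$; assuming $S$ is unbounded (the case of substance, since otherwise the $\dbar$-approximation of the trajectory $0^\infty\in X_S$ by any sequence with $0$-runs of bounded length would be obstructed by a positive lower bound on the density of~$1$s), we have $n^*_k\to\infty$. Given $x\in X_S$ I would define $y$ by leaving $x$ untouched outside its maximal $0$-runs of length $\ell>k$, and replacing each such $0^\ell$ with a word of the same length of the form $0^{s_0}10^{s_1}1\cdots 10^{s_r}$ with all $s_i\in S\cap[0,k]$. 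Such a decomposition is equivalent to expressing $\ell+1$ as a sum of elements of $N_k:=\{n+1:n\in S\cap[0,k]\}$; by the Sylvester--Frobenius theorem and $\gcd N_k=1$ this succeeds whenever $\ell+1$ exceeds the Frobenius number $F(N_k)$, and enlarging $k_0$ we can assume $F(N_k)<k$, so every $\ell>k$ is admissible. Using the summand $n^*_k+1$ as often as possible, the number $r$ of inserted $1$s in a run of length $\ell$ satisfies $r\le \lceil(\ell+1)/(n^*_k+1)\rceil$, so the proportion of modified positions inside that run is at most $(n^*_k+1)^{-1}+O(1/\ell)$; averaging,
\[
\dbar(x,y)\le \frac{1}{n^*_k+1}\xrightarrow{\ k\to\infty\ }0
\]
uniformly in $x\in X_S$, which yields $\Hdbar(X_k,X_S)\to 0$.

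The point I expect to be the main technical obstacle is the boundary bookkeeping: the first and last summands $s_0,s_r$ of each filler must match the existing $1$s of $x$ on the two sides of the long $0$-run so that the resulting gaps still lie in $S\cap[0,k]$, and an initial or terminal $0$-run (where no flanking $1$ exists) needs a slight variant of the same Frobenius argument. The extreme case $x=0^\infty$ is $\dbar$-approximated by the purely periodic point $(0^{n^*_k}1)^\infty\in X_k$, which is exactly what forces $n^*_k\to\infty$ and hence why the argument needs $S$ to contain arbitrarily large elements; handling these edge cases cleanly and verifying $F(N_k)$ is eventually dominated by $k$ are the only delicate points, and neither perturbs the density estimate above.
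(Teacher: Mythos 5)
Your construction coincides with the paper's (the truncated-gap-set shifts of finite type, mixing via the strongly connected subgraph of $G_S$ with loops of coprime lengths, density of the union via languages), and the plan of repairing long $0$-runs by legal fillers is also the paper's; but the quantitative heart of your argument is flawed. The claim that, ``using the summand $n^*_k+1$ as often as possible,'' the number of inserted $1$s in a run of length $\ell$ is at most $\lceil(\ell+1)/(n^*_k+1)\rceil$ up to $O(1/\ell)$ does not follow from Sylvester--Frobenius: that theorem guarantees $\ell+1$ is representable in $N_k$, but says nothing about the \emph{cost} of the remainder left after extracting copies of $n^*_k+1$. That remainder has size up to about $n^*_k$, and arithmetic obstructions can force it to be paid for with many small summands, each of which inserts a $1$. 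Concretely, let $S$ contain $2$, $m$, $2m$ with $m\equiv 0 \pmod 3$ and no other element below $2m$, and take $k=2m-1$: then $N_k=\{3,m+1\}$, the gap $\ell=2m\in S$ exceeds $k$, and the \emph{only} representation of $\ell+1=2m+1$ in $N_k$ is $(m+1)+\tfrac{m}{3}\cdot 3$, i.e.\ $r=m/3$ inserted $1$s, a Hamming density of exactly $1/6$ on that run --- not $1/(m+1)+O(1/\ell)$. Since $S$ may contain infinitely many such pairs $(m_j,2m_j)$, your bound, and with it the claimed uniform estimate $\dbar(x,y)\le 1/(n^*_k+1)$, fails for infinitely many $k$. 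You also misidentify the delicate point: the boundary matching you worry about is automatic once the run is encoded as $\sum_{i}(s_i+1)=\ell+1$ with all $s_i\in S\cap[0,k]$ (exactly your setup); the genuine difficulty is controlling the \emph{number} of summands uniformly in $k$ and $\ell$.

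The paper's proof avoids this by a two-scale construction with the quantifiers in a different order, and your argument can be repaired the same way. Fix $\eps>0$. First fix a finite coprime kit $\{s_1,\dots,s_N\}\subseteq S$ with $\gcd\{s_i+1\}=1$, so the associated Frobenius-type constant $L$ is independent of $k$; then fix a moderate element $s_M\in S$ with $1/s_M<\eps/2$ --- this fixed $s_M$, not the growing $n^*_k$, serves as the bulk period. A long gap $t_j$ is replaced by $p$ copies of $0^{s_M}1$ together with a single correction word of length $\ell'\in[L,L+s_M)$ built from the kit, chosen so that $t_j+1-\ell'$ is divisible by $s_M+1$; the correction costs at most $L+s_M$ changed positions, a constant depending only on $\eps$. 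Finally choose $K$ with $(L+s_M)/s_K<\eps/2$: every gap needing modification in $X_{S[k]}$, $k\ge K$, has length exceeding $s_k\ge s_K$, so the per-run density of changes is below $\eps$ uniformly in $x$, giving $\Hdbar(X_{S[k]},X_S)\le\eps$ for \emph{all} $k\ge K$. The structural point is that the bulk scale and the (constant) correction cost are fixed before $k$ is chosen, whereas in your version both grow with $k$, which is precisely what breaks the estimate. (Your side remarks are fine: restricting to unbounded $S$ matches the paper, and ``$F(N_k)<k$ eventually'' is correct since $N_k$ contains the fixed kit, so $F(N_k)$ is bounded; representability was never the problem --- the summand count was.)
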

\begin{proof}
Assume that $S=\{s_1,s_2,\ldots\}$ is infinite, $s_i<s_{i+1}$ for every $i\in\mathbb{N}$,  and $\gcd \{n+1:n\in S\}=1$ (if $S$ is finite, then $X_S$ is a shift of finite type).
%Without loss of generality we assume that $s_n/n\to 0$ as $n\to \infty$.
Let $N$ be such that $\gcd \{s_1+1,\ldots,s_N+1\}=1$ .
Let $L>0$ be the smallest integer such that for every $\ell\ge L$ there are nonnegative integers $\alpha_1,\ldots,\alpha_N$ such that
\[
\alpha_1(s_1+1)+\ldots+\alpha_N(s_N+1)=\ell.
\]
It follows that for every $\ell\ge L$ there are $r\in\N$ and $q_1,\ldots,q_r\in\{s_1,\ldots,s_N\}$ such that the word
\begin{equation}\label{eqn:w}
w=0^{q_1}10^{q_2}1\ldots 0^{q_r}1
\end{equation}
satisfies $w\in\lang(X_S)$ and $|w|=\ell$.
Set $S[n]=\{s_1,\ldots,s_n\}$ and let $X_n$ be the $S$-gap shift generated by $S[n]$. Clearly $X_1\subseteq X_2\subseteq X_3\subseteq\ldots$ and
\[
\overline{\bigcup_{n=1}^\infty X_n}=X_S.
\]
Furthermore, for every $n\ge N$ the shift space $X_n$ is a mixing shift of finite type. Fix $\eps>0$. Let $M\ge N$ be such that $1/s_M<\eps/2$. Let $K\ge M$ satisfy $(L+s_M)/s_K<\eps/2$. % for every $k\ge K$.
We claim that for every $k\ge K$ we have
\[\Hdbar(X_k,X)\le \eps.\]
Note that our proof is finished once we show that the claim holds.
For a proof of our claim fix $k\ge K$. Let $x\in X$. Since $\dbar$ is $\sigma$-invariant we assume that
\[
x=10^{t_1}10^{t_2}1\ldots 10^{t_j}1\ldots,
\]
that is, we assume that $x$ begins with 1 and contains infinitely many 1s. The proof is similar, if the symbol 1 occurs in $x$ only finitely many times.
Note that $x\notin X_k$ means that for some $j\in\N$ we have $t_j\in S\setminus\{s_1,\ldots,s_k\}$.

We will construct, for each $j$ such that $t_j \in S \setminus \{s_1,\ldots,s_k\}$, a word $v_j$ with $|v_j| = t_j$ such that $\dHam(v_j, 0^{t_j}) < \eps$ and $1v_j1 \in \lang(X_S[k])$. Put also $v_j = 0^{t_j}$ if $t_j \in \{s_1,\ldots,s_k\}$. Once this is accomplished, we set $x' = 1 v_1 1 v_2 1 \dots$.
It is now enough to observe that $x' \in X_{S[k]}$ and $\dbar(x,x') \leq \sup_j \dHam(v_j, 0^{t_j})$.

%For each occurrence of a word $10^{t}1$ with $t\in S\setminus\{s_1,\ldots,s_k\}$ we will modify $x$, so that at the end we will obtain $x'\in X_k$ with $\dbar(x,x')<\eps$.

Fix $j$ such that $t_j \in S \setminus \{s_1,\ldots,s_k\}$. Let $\ell$ be the unique integer such that $L\le\ell <L+s_M$ and $t_j+1-\ell$ is divisible by $(s_M+1)$. We replace the suffix $0^{\ell-1}1$ of $10^{t_j}1$ by a word $w$ with $|w|=\ell$ having the form as in \eqref{eqn:w}. %We obtain
The remaining prefix of $ 10^{t_j}1$ now has the form
\[
10^{t_j+1-\ell}
\]
where $t_j+1-\ell=p(s_M+1)$ for some $p\ge 1$. Therefore we may replace $0^{t_j+1-\ell}$ by
\[
(0^{s_M}1)^p.
\]
We have found a word $v$ with $|v|=t_j$ such that
\[
u=1v1=1(0^{s_M}1)^p0^{q_1}10^{q_2}1\ldots 0^{q_r}1\in\lang(X_S[k]).
\]
Note that
\[
\dHam(0^{t_j},v)\le \frac{1}{t_j}(t_j/(s_M+1)+(L+s_M))<\eps,
\]
which completes the proof.
\end{proof}

Entropy-density for mixing $S$-gap shifts follows from Theorem B and Theorem 3.5 in \cite{CTY}.
Using Proposition \ref{prop:inner-dbar-S-gap} and Corollary \ref{cor:gen-scheme} we obtain a new proof of this fact.  

\begin{cor}
Every mixing $S$-gap shift has entropy dense set of ergodic measures.
\end{cor}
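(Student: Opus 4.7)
The plan is to combine the inner $\dbar$-approachability result just established with the machinery developed in Section \ref{sec:main}. Specifically, the preceding proposition hands us a sequence $(X_n)_{n=1}^\infty$ of \emph{mixing shifts of finite type} contained in the given mixing $S$-gap shift $X_S$ such that $\Hdbar(X_n,X_S)\to 0$. This is essentially all the input we need.

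First I would observe that each $X_n$ is in particular a transitive sofic shift (every shift of finite type is sofic, and mixing implies transitive), so by Proposition \ref{prop:sofic-ent-density} each $X_n$ has entropy-dense ergodic measures. Next, using the trivial inequality $\Hdund(X_n,X_S)\le \Hdbar(X_n,X_S)$ from Proposition \ref{prop:dbar-le-dund}, the hypothesis $\Hdbar(X_n,X_S)\to 0$ immediately yields $\Hdund(X_n,X_S)\to 0$. At this point Corollary \ref{cor:gen-scheme-sofic-dund} applies verbatim with this sequence of transitive sofic shifts and the target shift space $X = X_S$, delivering entropy density of the ergodic measures in $\Ms(X_S)$.

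There is no substantive obstacle: the whole point of setting up Corollary \ref{cor:gen-scheme-sofic-dund} and the previous proposition is to make this deduction a one-line consequence. The only minor thing worth double-checking is that the approximating sequence produced in the proof of inner $\dbar$-approachability indeed consists of shifts of finite type (which it does, as each $X_n$ is an $S[n]$-gap shift with finite $S[n]$), so that Proposition \ref{prop:sofic-ent-density} is applicable. Once that is noted, the corollary follows at once.
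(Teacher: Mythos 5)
Your proof is correct and matches the paper's intended argument exactly: the corollary is stated without an explicit proof precisely because it follows at once from the preceding proposition on inner $\dbar$-approachability together with Corollary \ref{cor:gen-scheme-sofic-dund}, via the chain you describe (each $S[n]$-gap shift is a mixing shift of finite type, hence a transitive sofic shift with entropy-dense ergodic measures by Proposition \ref{prop:sofic-ent-density}, and $\Hdund \le \Hdbar$ from Proposition \ref{prop:dbar-le-dund} converts the $\Hdbar$-convergence into the hypothesis of that corollary). Your observation that containment of the approximating shifts in $X_S$ is not needed for this deduction is also accurate.
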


\section*{Acknowledgments}
We thank the referees for their positive comments and corrections that helped us to improve this paper.   
We are grateful to Dan Thompson for sharing his unpublished manuscript \cite{Thompson-note}. We would like to thank Aurelia Dymek for reading the preprint and sharing with us her helpful and insightful remarks. D.~Kwietniak was supported by
the National Science Centre (NCN) Opus grant no. 2018/29/B/ST1/01340. J.~Konieczny is working within the framework of the LABEX MILYON (ANR-10-LABX-0070) of Universit\'{e} de Lyon, within the program ``Investissements d'Avenir'' (ANR-11-IDEX-0007) operated by the French National Research Agency (ANR). He also acknowledges support from the Foundation for Polish Science (FNP).


\begin{thebibliography}{XX}
\bibitem{ALR} e. H. el Abdalaoui, M. Lema\'{n}czyk, T. de la Rue, \emph{A dynamical point of view on the set of $\BB$-free integers}.
% El Abdalaoui, El Houcein; Lemańczyk, Mariusz; de la Rue, Thierry A dynamical point of view on the set of B-free integers.
Int. Math. Res. Not. IMRN 2015, no. 16, 7258--7286. %Preprint, \url{arXiv:1311.3752}, 2013.
%\bibitem{ACS} Avdeeva, M.; Cellarosi, F.; Sinai, Ya. G. \emph{Ergodic and statistical properties of $B$-free numbers.} translated from Teor. Veroyatn. Primen. 61 (2016), no. 4, 805--829 Theory Probab. Appl. 61 (2017), no. 4, 569--589.
\bibitem{Avdeeva} Maria Avdeeva, \emph{Limit theorems for B-free integers and the Moebius function}. Thesis (Ph.D.)–Princeton University. 2016. 105 pp. ISBN: 978-1369-21950-0.
\bibitem{ACS} M. Avdeeva, F. Cellarosi, Y. G. Sinai, \emph{Ergodic and statistical properties of $\BB$-free numbers}.  translated from Teor. Veroyatn. Primen. 61 (2016), no. 4, 805--829; Theory Probab. Appl. 61 (2017), no. 4, 569--589.
\bibitem{BH15} M. Baake, and C. Huck. \emph{Ergodic properties of visible lattice points}. Proceedings of the Steklov Institute of Mathematics, 288(1) (2015): 165-188.
\bibitem{BKPLR} V. Bergelson, J. Kułaga-Przymus, M. Lemańczyk, F.~K. Richter, \emph{Rationally almost periodic sequences, polynomial multiple recurrence and symbolic dynamics}. Ergodic Theory Dynam. Systems 39 (2019), no. 9, 2332--2383.
\bibitem{BR} V. Bergelson, I. Ruzsa, \emph{Squarefree numbers, IP sets and ergodic theory}. Paul Erdős and his mathematics, I (Budapest, 1999), 147--160, Bolyai Soc. Math. Stud., 11, János Bolyai Math. Soc., Budapest, 2002.
\bibitem{Blank} M. L. Blank, \emph{Metric properties of $\varepsilon$-trajectories of dynamical systems with stochastic behaviour}. Ergodic Theory Dynam. Systems 8 (1988), no. 3, 365--378. %MR0961736
%\emph{Deterministic properties of stochastically perturbed dynamical systems}. Teor. Veroyatnost. i Primenen. 33 (1988), 659–671 (in Russian); English transl.: Theory Probab. Appl. 33 (1988), 612--623.
%(Russian) ; translated from Teor. Veroyatnost. i Primenen. 33 (1988), no. 4, 659--671 Theory Probab. Appl. 33 (1988), no. 4, 612--623 %(1989) MR0979734
%\bibitem{BKKPL} A. Dymek, S. Kasjan, J. Ku{\l}aga-Przymus, and M. Lema\'{n}czyk, \emph{$\BB$-free sets and
%dynamics}.  Trans. Amer. Math. Soc. {\bf 370} (2018), no. 8, 5425--5489.
\bibitem{CS} F. Cellarosi, Y. G. Sinai, \emph{Ergodic Properties of Square-Free Numbers}.
Journal of the European Mathematical Society, \textbf{15} (2013) , 1343-1374.
\bibitem{CV} F. Cellarosi, I. Vinogradov, \emph{Ergodic properties of $k$-free integers in number fields}. J. Mod. Dyn. 7 (2013), no. 3, 461--488.
\bibitem{ChRU} Chazottes, J.-R.; Ramirez, L.; Ugalde, E. Finite type approximations of Gibbs measures on sofic subshifts. Nonlinearity 18 (2005), no. 1, 445--463. MR2109486
%\bibitem{Climenhaga} V. Climenhaga, \emph{Topological pressure of simultaneous level sets}. Nonlinearity 26 (2013), no. 1, 241--268. %MR3001770
%\bibitem{CP} V. Climenhaga, R. Pavlov, \emph{One-sided almost specification and intrinsic ergodicity}. Ergodic Theory Dynam. Systems 39 (2019), no. 9, 2456--2480.
\bibitem{CT} V. Climenhaga, D. Thompson, \emph{Intrinsic ergodicity beyond specification: $\beta$-shifts, $S$-gap shifts, and their factors}. Israel J. Math. 192 (2012), no. 2, 785--817. %MR3009742
\bibitem{CTY} V. Climenhaga, D. Thompson, K. Yamamoto, \emph{Large deviations for systems with non-uniform structure}. Trans. Amer. Math. Soc. 369 (2017), no. 6, 4167--4192. 
\bibitem{Comman} H. Comman,  \emph{Strengthened large deviations for rational maps and full shifts, with unified proof}. Nonlinearity \textbf{22} (2009), no. 6, 1413--1429. %MR2507327 (2010k:37002)
%\bibitem{Comman2016} H. Comman, \emph{Entropy approximation versus uniqueness of equilibrium for a dense affine space of continuous functions}. Stoch. Dyn. 16 (2016), no. 6, 1650020, 12 pp.
\bibitem{Comman2017} H. Comman, \emph{Criteria for the density of the graph of the entropy map restricted to ergodic states}. Ergodic Theory Dynam. Systems 37 (2017), no. 3, 758–785.
%\bibitem{CRL} H. Comman, J. Rivera-Letelier, \emph{Large deviation principles for non-uniformly hyperbolic rational maps}.
%Ergodic Theory Dynam. Systems 31 (2011), no. 2, 321–349.
%\bibitem{Dateyama} M. Dateyama, \emph{Invariant measures for homeomorphisms with almost weak specification}. Probability theory and mathematical statistics (Tbilisi, 1982), 93--96, Lecture Notes in Math., 1021, Springer, Berlin, 1983. MR0735976 Add to clipboard
%\bibitem{Dateyama2} M. Dateyama, \emph{The almost weak specification property for ergodic group automorphisms of abelian groups}. J. Math. Soc. Japan 42 (1990), no. 2, 341--351. %MR1041229 Add to clipboard
\bibitem{DE1} H. Davenport, P. Erd\H{o}s, \emph{On sequences of positive integers}. Acta Arithmetica \textbf{2} (1936), 147--151.
\bibitem{DE2} H. Davenport, P. Erd\H{o}s, \emph{On sequences of positive integers}. J. Indian Math. Soc. (N.S.) \textbf{15} (1951), 19--24. %MR0043835
\bibitem{DGS} M. Denker, Ch. Grillenberger, K. Sigmund, Ergodic theory on compact spaces. Lecture Notes in Mathematics, Vol. 527. Springer-Verlag, Berlin-New York, 1976. {\rm iv}+360 pp. MR0457675
\bibitem{Deka} K. Deka, \emph{Some properties of regular and rational sets}. Acta Arith. 182 (2018), no. 3, 279--284.
%\bibitem{Downarowicz} T. Downarowicz, \emph{The Choquet simplex of invariant measures for minimal flows}. Israel J. Math. 74 (1991), no. 2-3, 241--256. %MR1135237
%\bibitem{DS} T. Downarowicz, J. Serafin, \emph{Possible entropy functions}. Israel J. Math. {\bf 135} (2003), 221--250.
\bibitem{DKKPL} A. Dymek, S. Kasjan, J. Kułaga-Przymus, M. Lemańczyk, \emph{$\BB$-free sets and dynamics}. Trans. Amer. Math. Soc. \textbf{370} (2018), no. 8, 5425--5489. %MR3803141
\bibitem{Bartnicka} A. Dymek, \emph{Proximality of multidimensional $\BB$-free systems}, Discrete Contin. Dyn. Syst. \textbf{41} (2021), no. 8, 3709--24. %to appear in DCDS A, 2021, doi: 10.3934/dcds.2021013
\bibitem{GK} Katrin Gelfert, Dominik Kwietniak, \emph{On density of ergodic measures and generic points}. Ergodic Theory Dynam. Systems 38 (2018), no. 5, 1745--1767. %MR3820000
%submitted, {\url{arXiv:1404.0456v1 [math.DS]}}, 2014.
%\bibitem{Jung11} Jung, On the existence of open and bi-continuing codes. Trans. Amer. Math. Soc. 363 (2011), no. 3, 1399--1417.
\bibitem{EKW} A. Eizenberg, Y. Kifer, and B. Weiss, \emph{Large deviations for $\Z^d$-actions}, Comm. Math. Phys.
\textbf{164} (1994), no. 3, 433--454. %MR 1291239 (95k:60071)
\bibitem{FO} H. F\"{o}llmer, S. Orey, \emph{Large deviations for the empirical field of a Gibbs measure}. Ann. Probab. \textbf{16} (1988), no. 3, 961--977. %MR0942749 (89i:60063)]
\bibitem{FO70} N.A. Friedman, D.S. Ornstein
\emph{On isomorphism of weak Bernoulli transformations}, Adv. Math., \textbf{5} (1970), pp. 365--394.
%\bibitem{GP} A. Gorodetski, Y. Pesin, \emph{Path connectedness and entropy density of the space of hyperbolic ergodic measures}. In: Modern theory of dynamical systems, 111–121, Contemp. Math., \textbf{692}, Amer. Math. Soc., Providence, RI, 2017. %(Reviewer: Paul Hulse)
%\bibitem{GW} Glasner, Eli; Weiss, Benjamin. Sensitive dependence on initial conditions. Nonlinearity 6 (1993), no. 6, 1067--1075.
%\bibitem{H2} F. Hofbauer, \emph{Generic properties of invariant measures for continuous piecewise monotonic transformations}. Monatsh. Math. 106 (1988), no. 4, 301--312.

\bibitem{IN} Alejandro Illanes, Sam B. Nadler Jr., Hyperspaces. Fundamentals and recent advances. Monographs and Textbooks in Pure and Applied Mathematics, 216. Marcel Dekker, Inc., New York, 1999. %xx+512 pp. ISBN: 0-8247-1982-4
\bibitem{Jung} U. Jung, \emph{On the existence of open and bi-continuing codes}, Trans. Amer. Math. Soc. \textbf{363} (2011), 1399--1417.
%\bibitem{Keller-DCDS} Gerhard Keller, \emph{Maximal equicontinuous generic factors and weak model sets}, Discrete Contin. Dyn. Syst. 40 (2020), no. 12, 6855--6875.

\bibitem{KKLem} S. Kasjan, G. Keller, M. Lema\'{n}czyk, \emph{Dynamics of $\BB$-free sets: a view through the window}. Int. Math. Res. Not. IMRN 2019, no. 9, 2690--2734.
\bibitem{Keller} Gerhard Keller, \emph{Generalized heredity in $\mathcal{B}$-free systems}. Stoch. Dyn. \textbf{21} (2021), no. 3, Paper No. 2140008, 19 pp. % to appear in Stochastics and  Dynamics, 2021, available online \url{https://doi.org/10.1142/S0219493721400086}
\bibitem{Keller-Studia} Gerhard Keller, \emph{Tautness for sets of multiples and applications to ${\BB}$-free dynamics}. Studia Math. 247 (2019), no. 2, 205--216.

%\bibitem{Kiefer} Yuri Kifer, \emph{Large deviations in dynamical systems and stochastic processes}. Trans. Amer. Math. Soc. 321 (1990), no. 2, 505--524. %MR1025756
%\bibitem{KKK} Jakub Konieczny, Michal Kupsa, Dominik Kwietniak, \emph{Arcwise connectedness of the set of ergodic measures of hereditary shifts}. Proc. Amer. Math. Soc. {\bf 146} (2018), no. 8, 3425--3438. %MR3803667

\bibitem{KKK3} Jakub Konieczny, Michal Kupsa, Dominik Kwietniak, \emph{Minimal and proximal examples of $\dbar$-stable and $\dbar$-approachable shift spaces}. in preparation.

\bibitem{KKL} M. Kulczycki, D. Kwietniak, J. Li, \emph{Entropy of subordinate shift spaces}. Amer. Math. Monthly 125 (2018), no. 2, 141--148.
\bibitem{KKO} M. Kulczycki, D. Kwietniak, P. Oprocha,  \emph{On almost specification and average shadowing properties}. Fund. Math. 224 (2014), no. 3, 241--278.

\bibitem{KPL} Joanna Kułaga-Przymus, Michał D. Lemańczyk, \emph{Hereditary subshifts whose measure of maximal entropy does not have the Gibbs property
}. Colloq. Math. \textbf{166} (2021), no. 1, 107--127.  %to appear in Colloquium Mathematicum, DOI: 10.4064/cm8223-11-2020
\bibitem{KPLW} J. Ku{\l}aga-Przymus, M. Lema\'{n}czyk, B. Weiss, \emph{On invariant measures for $\mathscr{B}$-free systems}. Proc. Lond. Math. Soc. (3), 110, 1435-1474, 2015.
\bibitem{KPLW2} J. Ku{\l}aga-Przymus, M. Lema\'{n}czyk, B. Weiss, \emph{Hereditary subshifts whose simplex of invariant measures is Poulsen}. In: Ergodic theory, dynamical systems, and the continuing influence of John C. Oxtoby, 245--253, Contemp. Math., 678, Amer. Math. Soc., Providence, RI, 2016.
\bibitem{KLO} Dominik Kwietniak, Martha {\L}\c{a}cka, Piotr Oprocha, \emph{A panorama of specification-like properties and their consequences}. In: Dynamics and numbers, 155--186, Contemp. Math., 669, Amer. Math. Soc., Providence, RI, 2016.
%Contemporary Mathematics, to appear, {\url{arXiv:1503.07355 [math.DS]}}, 2015.
\bibitem{KLO2} Dominik Kwietniak, Martha {\L}\c{a}cka, Piotr Oprocha, \emph{Generic points for dynamical systems with average shadowing}. Monatsh. Math. 183 (2017), no. 4, 625--648.
%\bibitem{KOR} Dominik Kwietniak, Piotr Oprocha, Micha{\l} Rams, \emph{On entropy of dynamical systems with almost specification},
%submitted, {\url{arXiv:1411.1989v1 [math.DS]}}, 2014.
\bibitem{KOR} D. Kwietniak, P. Oprocha, M. Rams, \emph{On entropy of dynamical systems with almost specification}. Israel J. Math. \textbf{213} (2016), no. 1, 475--503. %MR3509480
\bibitem{K} D. Kwietniak, \emph{Topological entropy and distributional chaos in hereditary shifts with applications to spacing shifts and beta shifts}. Discrete Contin. Dyn. Syst. 33 (2013), no. 6, 2451--2467. %MR3007694
\bibitem{Kurka} P. Kůrka, Topological and symbolic dynamics. Cours Spécialisés [Specialized Courses], 11. Société Mathématique de France, Paris, 2003. xii+315 pp.
%ISBN: 2-85629-143-0
\bibitem{LO} Jian Li, Piotr Oprocha, \emph{Properties of invariant measures in dynamical systems with the shadowing property}. Ergodic Theory Dynam. Systems 38 (2018), no. 6, 2257--2294. %MR3833349
\bibitem{LM} Douglas Lind,  Brian Marcus, An introduction to symbolic dynamics and coding. Cambridge University Press, Cambridge, 1995. {\rm xvi}+495 pp. ISBN: 0-521-55124-2; 0-521-55900-6
%\bibitem{LOS} J. Lindenstrauss, G. Olsen, Y. Sternfeld, Y. \emph{The Poulsen simplex}. Ann. Inst. Fourier (Grenoble) 28 (1978), no. 1, {\rm vi}, 91--114. %MR0500918
%\bibitem{Marcus} B. Marcus, \emph{A note on periodic points for ergodic toral automorphisms}. Monatsh. Math. 89 (1980), no. 2, 121--129.
%\bibitem{Oprocha} P. Oprocha, \emph{Families, filters and chaos}. Bull. Lond. Math. Soc. 42 (2010), no. 4, 713--725.
\bibitem{Orey} S. Orey, \emph{Large deviations in ergodic theory}. In: Seminar on stochastic processes, 1984 (Evanston, Ill., 1984), 195--249, Progr. Probab. Statist., 9, Birkhäuser Boston, Boston, MA, 1986. %MR0896730
\bibitem{Ornstein} D.S. Ornstein, Ergodic theory, randomness, and dynamical systems. In: Yale Mathematical Monographs, Vol. 5. Yale University Press, Yale, 1974.
%\bibitem{Parry} W. Parry. \emph{Topics in ergodic theory.} Vol. 75. Cambridge University Press, 2004.
\bibitem{Parthasarathy} K. R. Parthasarathy, \emph{On the category of ergodic measures}. Illinois J. Math. 5 (1961), 648--656. %MR0148850
\bibitem{Pavlov} R. Pavlov, \emph{On intrinsic ergodicity and weakenings of the specification property}. Adv. Math. 295 (2016), 250--270.
\bibitem{Peckner} R. Peckner, \emph{Uniqueness of the measure of maximal entropy for the squarefree flow}.
 Israel J. Math. 210 (2015), no. 1, 335--357.
\bibitem{PS} C.-E. Pfister, W. G. Sullivan, \emph{Large deviations estimates for dynamical systems without the specification property. Applications to the $\beta$-shifts}. Nonlinearity \textbf{18} (2005), no. 1, 237--261. %MR2109476 (2005h:37015)
\bibitem{PS07} C.-E. Pfister, W. G. Sullivan, \emph{On the topological entropy of saturated sets}. Ergodic Theory Dynam. Systems {\bf 27} (2007), no. 3, 929--956.
\bibitem{PS2} C.-E. Pfister, W. G. Sullivan, \emph{Weak Gibbs measures and large deviations}. Nonlinearity {\bf 31} (2018), no. 1, 49--53. %MR3746632
%\bibitem{PS3} C.-E. Pfister, W. G. Sullivan, \emph{Asymptotic decoupling and weak Gibbs measures for finite alphabet shift spaces}. Nonlinearity {\bf 33} (2020), no. 9, 4799--4817.
%\bibitem{QS} A. Quas, T. Soo, \emph{Ergodic universality of some topological dynamical systems}. Trans. Amer. Math. Soc. 368 (2016), no. 6, 4137--4170.
\bibitem{Rudolph} D. Rudolph. \emph{Fundamentals of measurable dynamics: Ergodic theory on Lebesgue spaces.} Oxford University Press, 1990.
\bibitem{Sarnak} P. Sarnak. \emph{Three lectures on the M\"{o}bius function randomness and dynamics (Lecture 1)}.
\url{http://publications.ias.edu/sites/default/files/MobiusFunctionsLectures(2).pdf}.
\bibitem{Schmeling} J\"{o}rg Schmeling, \emph{Symbolic dynamics for $\beta$-shifts and self-normal numbers}. Ergodic Theory Dynam. Systems \textbf{17} (1997), no. 3, 675--694. %MR1452189
%\bibitem{Shields} P. Shields. \emph{The Ergodic Theory of Discrete Sample Path.} Vol. 13. American Mathematical Society, 1991.
\bibitem{Sigmund70} Karl Sigmund, \emph{Generic properties of invariant measures for Axiom ${\rm A}$ diffeomorphisms}. Invent. Math. 11 (1970), 99--109. MR0286135
%\bibitem{Sigmund} Karl Sigmund, \emph{On the connectedness of ergodic systems}. Manuscripta Math. 22 (1977), no. 1, 27--32. %MR0447528
\bibitem{Thompson} D. Thompson, \emph{Irregular sets, the $\beta$-transformation and the almost specification property}. Trans. Amer. Math. Soc. 364 (2012), no. 10, 5395--5414.
\bibitem{Thompson-note} D. Thompson, \emph{A `horseshoe' theorem in symbolic dynamics via single sequence techniques}, unpublished manuscript, 2017.
\bibitem{Ville} Jean Ville, \emph{Étude critique de la notion de collectif}. (French) NUMDAM, [place of publication not identified], 1939. 116 pp. MR3533075
%\bibitem {Yamamoto} K. Yamamoto, \emph{On the weaker forms of the specification property and their applications}. Proc. Amer. Math. Soc. 137 (2009), no. 11, 3807–3814.
%\bibitem{WeissBook} Benjamin Weiss, \emph{Single orbit dynamics}, CBMS Regional Conference Series
  %in Mathematics, vol.~95, American Mathematical Society, Providence, RI, 2000.
  %\MR{1727510 (2000k:37001)}
\bibitem{WOC} Xinxing Wu,  Piotr Oprocha, Guanrong Chen, \emph{On various definitions of shadowing with average error in tracing}. Nonlinearity \textbf{29} (2016), no. 7, 1942--1972.
%\bibitem{EinsiedlerWardBook}
%Manfred Einsiedler and Thomas Ward, \emph{Ergodic theory with a view towards
%  number theory}, Graduate Texts in Mathematics, vol. 259, Springer-Verlag
%  London, Ltd., London, 2011. \MR{2723325 (2012d:37016)}
\end{thebibliography}
\end{document}